\documentclass[10pt,a4paper]{article}

\setlength{\hoffset}{1in} \setlength{\oddsidemargin}{-1in}
\setlength{\voffset}{1.25in} \setlength{\topmargin}{-1.5in}
\setlength{\textwidth}{15.92cm} \setlength{\textheight}{23.35cm}
\usepackage{amsthm}
\usepackage{amsmath}
\usepackage{amssymb}
\usepackage{empheq}
\usepackage{stackrel}
\usepackage{color}
\usepackage{framed}
\usepackage{multirow}
\usepackage{float}
\usepackage{caption}
\usepackage{hyperref}

\newcommand{\ds}{\displaystyle}

\newcommand{\lt}{\left}
\newcommand{\rt}{\right}

\def\Vol{\operatorname{Vol}}
\def \CC{\mathbb C}
\def \Re{\operatorname{Re}}

\newtheorem{conjecture}{Conjecture}
\newtheorem{theorem}{Theorem}
\newtheorem{lemma}{Lemma}

\newtheorem{corollary}{Corollary}

\newtheorem{question}{Question}

\def\SS{{\mathbb S}}

\def\CC{{\mathbb C}}
\def\RR{{\mathbb R}}
\def\ZZ{{\mathbb Z}}
\def\NN{{\mathbb N}}
\def\DD{{\mathbb D}}

\def\Vol{\operatorname{Vol}}
\def\CS{\operatorname{CS}}
\def \Re{\operatorname{Re}}
\def \im{\operatorname{Im}}

\def \Li{\operatorname{Li_2}}
\def \im{\operatorname{Im}}

\def \det{\operatorname{det}}
\def \Hess{\operatorname{Hess}}

\title{Volume conjecture, geometric decomposition and \\deformation of hyperbolic structures}

\author{Ka Ho WONG}

\date{}

\begin{document}

\maketitle

\begin{abstract}
In this paper, we study the generalized volume conjecture for the colored Jones polynomials of links with complements containing more than one hyperbolic piece. First of all, we construct an infinite family of prime links by considering the cabling on the figure eight knot by the Whitehead chains. The complement of these links consist of two hyperbolic pieces in their JSJ decompositions. We show that at the $(N+\frac{1}{2})$-th root of unity, the exponential growth rates for the $\vec{N}$-th colored Jones polynomials for these links capture the simplicial volume of the link complements. As an application, we prove the volume conjecture for the Turaev-Viro invariants for these links complements. 

We also generalize the volume conjecture for links whose complement have more than one hyperbolic piece in another direction. By considering the iterated Whitehead double on the figure eight knot and the Hopf link, we construct two infinite families of prime links. Furthermore, we assign certain ``natural'' incomplete hyperbolic structures on the hyperbolic pieces of the complements of these links and prove that the sum of their volume coincides with the exponential growth rate of certain sequences of values of colored Jones polynomials of the links. 

\end{abstract}

\section{Introduction}
This paper aims to study the asymptotics of the $\vec{M}$-th colored Jones polynomials at the $(N+\frac{1}{2})$-th root of unity for links with more than one hyperbolic pieces in their JSJ decompositions, where $\vec{M}$ is a sequence of multi-color in $N$. Our results combine the volume conjecture for non-hyperbolic links and the generalized volume conjecture for hyperbolic links. First, by considering the cabling of the figure eight knot by the Whitehead chains, we obtain an infinite family of prime links with $2$ hyperbolic pieces in their complement in $\SS^3$. Furthermore, we prove that this infinite family of links satisfies the volume conjecture at $(N+\frac{1}{2})$-th root of unity. Besides, by studying the iterated Whitehead double on the figure eight knot and Hopf link, we establish the relationship between the limiting ratio of the multi-color $\ds \vec{s} = \lim_{N\to \infty}\frac{\vec{M}}{N+\frac{1}{2}}$ and the exponential growth rates of the corresponding colored Jones polynomials. Based on these results, we propose the generalized volume conjecture for prime links whose complement has only hyperbolic pieces under the JSJ decomposition. 

\subsection{Brief review of volume conjectures}

The volume conjecture relates quantum invariants of knots and 3-manifolds with their topological and geometrical structures. For hyperbolic knots, the volume conjecture suggests that the exponential growth rate of the $N$-th normalized colored Jones polynomials of a hyperbolic knot evaluated at the $N$-th root of unity captures the hyperbolic volume of the knot complement. 

\begin{conjecture}\label{cvcK}\cite{K97,MM01} Let $K$ be a hyperbolic knot and $J'_{N}(K;t)$ be the $N$-th normalized colored Jones polynomials of $K$ evaluated at $t$. We have
\begin{align*}
\lim_{N \to \infty} \frac{2\pi}{N} \log|J'_{N}(K;e^{\frac{2\pi i}{N}})| = \Vol(\SS^{3} \backslash K),
\end{align*}
where $\operatorname{Vol}(\SS^{3} \backslash K)$ is the hyperbolic volume of the knot complement, and the normalization is choosen so that for the unknot $U$, $J'_N(U,t) = 1$ for any $N\in \NN$.
\end{conjecture}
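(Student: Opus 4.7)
Conjecture \ref{cvcK} is the original Kashaev--Murakami--Murakami volume conjecture and remains open in full generality; the plan below is the approach I would pursue, modeled on the strategy that has succeeded for the figure eight knot and a handful of other families, and on which the extensions of the present paper are built. The first step is to replace $J'_N(K; e^{2\pi i/N})$ by Kashaev's invariant $\langle K \rangle_N$ using the identification of Murakami--Murakami, reducing the claim to an asymptotic estimate for a concrete finite state sum attached to a diagram of $K$.

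Next I would rewrite this state sum in terms of the quantum dilogarithm and, via Euler--Maclaurin summation, pass to an oscillatory integral of the form
\begin{equation*}
\langle K \rangle_N \;\sim\; \int e^{\frac{N}{2\pi i}\,\Phi(z)}\,dz,
\end{equation*}
where the potential $\Phi(z) = \sum_j \pm \Li(z_j^{\pm 1}) + (\text{polynomial terms})$ is assembled from classical dilogarithms indexed by the shape parameters of an ideal triangulation $\mathcal{T}$ of $\SS^3 \setminus K$. The essential algebraic input is that the critical point equations of $\Phi$, after exponentiation, recover Thurston's gluing equations for $\mathcal{T}$, so the complete hyperbolic structure singles out a distinguished geometric saddle $z^\ast$.

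The analytic heart of the argument is then a saddle point analysis around $z^\ast$: one deforms the integration contour to a steepest descent path through $z^\ast$ and identifies $\frac{1}{2\pi}\im \Phi(z^\ast)$ with the Bloch--Wigner dilogarithm sum computing $\Vol(\SS^3 \setminus K)$, which produces the claimed exponential growth rate.

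The hard part will be the rigorous justification of this saddle point step. One must (i) verify that $z^\ast$ dominates all competing critical points and that $\Re \Phi$ decays fast enough along the chosen contour; (ii) produce Euler--Maclaurin tail estimates with error uniform in $N$ so that the sum-to-integral passage is valid; and (iii) for a given hyperbolic knot, exhibit an ideal triangulation on which the potential has this tractable form. Item (iii) is the principal reason the conjecture is open in general, and it is precisely this obstacle that motivates the restriction in the present paper to links built by cabling and doubling operations on the figure eight knot and the Hopf link, where the hyperbolic building blocks are well understood and the potential can be written down explicitly.
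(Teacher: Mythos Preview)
Your proposal is appropriate in that it correctly recognizes the essential point: Conjecture~\ref{cvcK} is stated in the paper as a \emph{conjecture}, not a theorem, and the paper makes no attempt to prove it. It is quoted from \cite{K97,MM01} purely as background and motivation. There is therefore no ``paper's own proof'' to compare against; the statement remains open in general, exactly as you say.

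Your outline of the saddle-point strategy is a fair summary of the heuristic that underlies both the known special cases and the paper's own results for the specific families $W_{a,1,c,d}(4_1)$, $W_{0,1,1,0}^{p+1}(4_1)$, and $W^\alpha_\beta$. In those cases the paper works not at the root $e^{2\pi i/N}$ but at $e^{2\pi i/(N+\frac{1}{2})}$, and the potential $\Phi$ is built explicitly from the known potentials for the figure eight knot and the Whitehead link rather than from an abstract ideal triangulation. The rigorous ingredients the paper actually invokes---Lemmas~\ref{uppbd1} and~\ref{uppbdW} for the maximum estimate, the Poisson summation formula, and Ohtsuki's saddle point approximation \cite{O52}---are concrete instances of your steps (i) and (ii), available precisely because the building blocks are so explicit. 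Your item (iii) is indeed the obstruction to a general proof, and the paper sidesteps it entirely by restricting to these cabling and doubling constructions.
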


Later, J. Murakami and H. Murakami extended the above volume conjecture to non-hyperbolic links and suggested that the exponential growth rate captures the simplicial volume of the link complement.

\begin{conjecture}\label{cvcMM}\cite{MM01} Let $L$ be a link and $J'_{\vec{N}}(L;t)$ be the $\vec{N}$-th normalized colored Jones polynomials of $L$ evaluated at $t$, where $\vec{N} = (N,\dots, N)$. We have
\begin{align*}
\lim_{N \to \infty} \frac{2\pi}{N} \log|J'_{\vec{N}}(L;e^{\frac{2\pi i}{N}})| = v_3||\SS^3 \backslash L||,
\end{align*}
where $||\SS^3 \backslash L||$ is the simplicial volume of the link complement.
\end{conjecture}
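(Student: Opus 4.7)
The plan is to adapt the state-integral / saddle point paradigm that underlies the proof of Conjecture \ref{cvcK} for specific hyperbolic knot families to the setting where $\SS^3 \backslash L$ decomposes into several geometric pieces under JSJ.

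First, I would express the normalized colored Jones polynomial $J'_{\vec{N}}(L; e^{2\pi i / N})$ as a finite state sum of the shape
\begin{align*}
J'_{\vec{N}}(L; e^{2\pi i/N}) \;=\; \sum_{k \in \Lambda_N} e^{N \Phi_N(k/N)},
\end{align*}
via the Reshetikhin--Turaev $R$-matrix formulation, or via Habiro's cyclotomic expansion applied to a suitable diagram of $L$, where $\Lambda_N$ is a lattice of internal colors and $\Phi_N$ is built out of quantum factorials that converge, after rescaling, to a holomorphic potential $\Phi$ involving $\Li$. Poisson summation then converts the discrete sum into a contour integral to which the method of steepest descent applies.

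Next, I would identify the critical points of $\Phi$ with hyperbolic gluing data. For a hyperbolic link this identification is now classical (Yokota, Thurston, Cho--Murakami): the saddle equations correspond to Thurston's gluing equations on an ideal triangulation, and the critical value equals $i\CS + \Vol$ of the complete hyperbolic structure. For a link $L$ whose complement has a nontrivial JSJ decomposition, one would carry out this matching piece by piece on each hyperbolic JSJ component $M_j$, and then appeal to Gromov's additivity $v_3 \|\SS^3 \backslash L\| = \sum_j \Vol(M_j)$ (summing over hyperbolic pieces only) to identify the real part of the dominant saddle value with the right-hand side of Conjecture \ref{cvcMM}.

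The main obstacle is the contribution of the Seifert-fibered pieces of the JSJ decomposition. These carry no simplicial volume, but the local quantum factors associated with them are not \emph{a priori} subexponential, so one must show that they only generate polynomial or purely oscillatory factors, rather than inflating the exponential growth rate. A closely related difficulty is to guarantee that the geometric saddle dominates all other critical points and all contour boundary contributions; this typically fails uniformly and requires explicit diagrammatic input. These are precisely the obstacles that have kept Conjecture \ref{cvcMM} open in full generality, so a natural strategy is to proceed one link family at a time --- as the present paper does for cablings of the figure eight by Whitehead chains and for iterated Whitehead doubles --- where the geometric decomposition is transparent and each Seifert-fibered contribution can be tracked by hand.
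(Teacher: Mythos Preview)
The statement you are addressing is Conjecture~\ref{cvcMM}, cited from \cite{MM01}; the paper does \emph{not} prove it, and indeed it remains open in general. There is therefore no ``paper's own proof'' to compare your proposal against. What the paper actually proves are special cases of the closely related Conjecture~\ref{NVCMM} (the analogue at the $(N+\tfrac{1}{2})$-th root of unity, not the $N$-th root), for the specific families $W_{a,1,c,d}(4_1)$, $W_{0,1,1,0}^{p+1}(4_1)$, and $W^\alpha_\beta$.

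Your proposal is not a proof but an outline of the standard saddle-point strategy together with an honest accounting of the obstructions, and you yourself conclude that these obstructions keep the conjecture open and force a family-by-family approach. That assessment is accurate. Two points of correction, however. First, the paper works at $t=e^{2\pi i/(N+1/2)}$ precisely because at $t=e^{2\pi i/N}$ the generalized conjecture is known to fail in certain cases (see the remarks around \cite{MY07,W19}); your proposal conflates the two roots. Second, for the families treated in the paper the JSJ decomposition contains \emph{no} Seifert-fibered pieces (only hyperbolic pieces glued along tori), so the ``main obstacle'' you describe does not arise there; what the paper actually has to control is the cross-coupling of the potential functions across the gluing tori, handled by verifying that the obvious product critical point $(\tfrac{1}{2},\tfrac{1}{4},\dots,\tfrac{5}{6})$ remains critical for the combined Fourier coefficient and that its Hessian is nondegenerate (Lemmas~\ref{nonsing1} and~\ref{nonsing2}).
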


Besides, for 3-manifolds, Q. Chen and T. Yang propose a version of volume conjecture for the Turaev-Viro invariants of finite volume hyperbolic 3-manifolds with or without boundary.

\begin{conjecture}\label{vctv}\cite{CY15}
For every hyperbolic 3-manifold $M$ with finite volume, we have
\begin{align*}
\lim_{\substack{ r\to \infty \\r \text{ odd}}}\frac{2\pi}{r}\log \left(TV_{r}(M,e^{\frac{2\pi i}{r}})\right) = \Vol(M)
\end{align*}
\end{conjecture}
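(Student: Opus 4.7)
The plan is to reduce Conjecture \ref{vctv} to a sharp form of the generalized volume conjecture at the $(N+\frac{1}{2})$-th root of unity via a Detcherry--Kalfagianni--Yang type identity. For a link complement $M=\SS^{3}\backslash L$ and $r=2N+1$, such an identity expresses
\begin{align*}
TV_{r}(M,e^{2\pi i/r}) \;=\; \eta_{r}\sum_{\vec{M}\in I_{r}}\bigl|J'_{\vec{M}}(L;e^{2\pi i/r})\bigr|^{2},
\end{align*}
where $I_{r}$ is an admissible set of multi-colors and $\eta_{r}$ has at most polynomial growth in $r$. For closed $M$, I would first present $M$ by integer surgery on a link and use $|RT_{r}|^{2}=TV_{r}$ together with the surgery formula to reduce to the same type of sum, at the cost of an additional Gaussian integration in the surgery variables.

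The next step is asymptotic analysis of this sum. The lower bound in Conjecture \ref{vctv} comes from isolating a single ``geometric'' multi-color $\vec{M}_{\ast}(N)$ whose limiting ratio $\vec{s}_{\ast}=\lim_{N\to\infty}\vec{M}_{\ast}(N)/(N+\frac{1}{2})$ is dictated, via the gluing equations and the $A$-polynomial, by the complete hyperbolic structure on $M$. Keeping only this summand and applying the generalized volume conjecture at the $(N+\frac{1}{2})$-th root of unity to it already yields
\begin{align*}
\liminf_{N\to\infty}\frac{2\pi}{2N+1}\log TV_{2N+1}(M,e^{2\pi i/(2N+1)}) \;\geq\; \Vol(M).
\end{align*}
The matching upper bound requires controlling $|J'_{\vec{M}}(L;e^{2\pi i/r})|^{2}$ uniformly over all $\vec{M}\in I_{r}$, with the dominant exponential rate matching $\Vol(M)$ only at $\vec{M}_{\ast}(N)$; geometrically, this is the statement that the volume of any incomplete deformation detected by another limiting ratio is strictly less than $\Vol(M)$, a Schl\"afli-type property of the Yokota--Kashaev potential function associated with $L$.

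The main obstacle is this uniform upper bound. The lower bound needs only pointwise asymptotics at one well-chosen color sequence, which is precisely what the generalized volume conjecture at the $(N+\frac{1}{2})$-th root of unity delivers for the link families treated in this paper. The upper bound, by contrast, requires a saddle-point analysis whose contours may collide or cross Stokes walls as $\vec{s}$ varies over $I_{r}/(N+\frac{1}{2})$, together with a global maximality statement for the complex-volume function on the character variety of $M$. Establishing this uniformly for an arbitrary hyperbolic $M$ appears to be out of reach of current techniques; however, for the specific families constructed here (cablings of the figure-eight knot by Whitehead chains, and iterated Whitehead doubles on the figure-eight and the Hopf link) the admissible set $I_{r}$ and the potential function admit an explicit description, so Conjecture \ref{vctv} for these complements should follow as a corollary of the generalized volume conjecture proved in the body of the paper.
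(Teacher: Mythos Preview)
The statement you are addressing is Conjecture~\ref{vctv}, quoted from \cite{CY15} as an \emph{open conjecture}; the paper neither claims nor attempts a proof of it in general. There is therefore no ``paper's own proof'' to compare against.

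Your proposal is also not a proof but a strategy, and you yourself identify the decisive step---the uniform upper bound on $|J'_{\vec M}(L;e^{2\pi i/r})|$ over all admissible colors, together with the global maximality of the complete structure---as ``out of reach of current techniques.'' That is an accurate assessment, but it means what you have written is an outline of why the conjecture is hard, not an argument establishing it. The reduction to a sum over colors via the Detcherry--Kalfagianni--Yang identity is correct for link complements (this is exactly Theorem~\ref{relationship}), and the lower-bound half of your plan is indeed how the paper obtains its special cases; but without the upper bound nothing is proved for a general hyperbolic $M$.

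Your final sentence, that Conjecture~\ref{vctv} for the specific families of this paper should follow from the results proved here, is also off target. The link complements $\SS^3\backslash W_{a,b,c,d}(4_1)$, $\SS^3\backslash W^{p+1}_{0,1,1,0}(4_1)$ and $\SS^3\backslash W^\alpha_\beta$ each have more than one hyperbolic piece in their JSJ decomposition, so they are \emph{not} hyperbolic $3$-manifolds and Conjecture~\ref{vctv} does not apply to them. What the paper actually proves for these manifolds (Corollaries~\ref{cablingtv1} and~\ref{cablingtv2}) is the extension Conjecture~\ref{tvGM}, in which the right-hand side is $v_3||\SS^3\backslash L||$ rather than a hyperbolic volume.
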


Let $J_{\vec{M}}(L,t)$ be the unnormalized colored Jones polynomials of the link $L$, i.e.
$$J_N(U,t) = \frac{t^{N/2} - t^{-N/2}}{t^{\frac{1}{2}}-t^{-\frac{1}{2}}}$$
for the unknot $U$ and any $N \in \NN$. It turns out that when the 3-manifold is the complement of a link $L$ in $\SS^3$, the Turaev-Viro invariants of the manifold $\SS^3\backslash L$ is closely related to the colored Jones polynomials $J_{\vec{M}}(L,t)$ of the link $L$ as follows. 

\begin{theorem}\label{relationship} \cite{DKY17}
Let L be a link in $\SS^3$ with n components. Then given an odd integer $r=2N+1 \geq 3$, we have
\begin{align*}
TV_{r}\left(\SS^3 \backslash L, e^{\frac{2\pi i}{r}}\right) = 2^{n-1}\lt( \frac{2\sin(\frac{2\pi}{r})}{\sqrt{r}}\rt)^{2} \sum_{1\leq \vec{M} \leq \frac{r-1}{2}}\left|J_{\vec{M}}\left(L,e^{\frac{2\pi i }{N+\frac{1}{2}}}\right)\right|^2
\end{align*}
\end{theorem}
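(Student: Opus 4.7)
The plan is to apply the Roberts-type identification of the Turaev--Viro invariant with a Reshetikhin--Turaev (RT) invariant of a doubled manifold, and then to expand the latter as a sum over colourings of the gluing tori.

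For the link exterior $M=\SS^3\setminus N(L)$, I would first invoke the identity
\[
TV_r(M,e^{2\pi i/r}) \;=\; RT_r\bigl(M \cup_\partial \overline{M}\bigr),
\]
where $\overline{M}$ is $M$ with reversed orientation and the gluing is the identity of the $n$ boundary tori. The double $M\cup_\partial \overline{M}$ is closed, so the RT invariant on the right is defined in the usual way via a surgery presentation combining $L$ and its mirror image.

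Next I would cut this double along each of the $n$ gluing tori and apply the TQFT gluing axiom: gluing along a torus $T^2$ at level $r$ inserts a sum over a basis of simple objects in the Hilbert space $V(T^2)$, weighted by inverse squared norms (the quantum dimensions). This produces an expansion
\[
RT_r\bigl(M\cup_\partial\overline{M}\bigr) \;=\; \sum_{\vec M}\,\Bigl(\prod_{j=1}^n w(M_j)\Bigr)\, Z_r(M;\vec M)\,\overline{Z_r(M;\vec M)},
\]
where $\vec M$ ranges over admissible colourings and $Z_r(M;\vec M)$ is the relative RT invariant of $M$ with the $j$-th boundary meridian coloured by $M_j$. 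I would then identify $Z_r(M;\vec M)$ with the unnormalized coloured Jones polynomial $J_{\vec M}(L,e^{2\pi i/(N+\frac12)})$, up to the $\eta$-factor of the RT construction, by presenting $M$ as the exterior of $L\subset\SS^3$ and cabling $L$ with the Jones--Wenzl idempotents indexed by $\vec M$. The mirror contribution $\overline{Z_r(M;\vec M)}$ then produces the modulus squared $|J_{\vec M}|^2$, matching the form of the right-hand side in the statement.

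The main obstacle I expect is the normalization bookkeeping. Tracking the $\eta$-factors of the RT construction yields the prefactor $\bigl(\tfrac{2\sin(2\pi/r)}{\sqrt r}\bigr)^2$. The natural summation from the TQFT gluing runs over $\vec M\in\{1,\dots,r-1\}^n$, and collapsing it to the range $1\le \vec M\le \tfrac{r-1}{2}$ via the symmetry $J_{r-M}(L,q^2)=\pm J_M(L,q^2)$, which comes from the duality $V_{i}\simeq V_{r-2-i}^{*}$ of the simple objects at level $r$, contributes a factor of $2^n$; one copy of this $2$ is absorbed into the global framing/Kirby-colour normalization that is already present once for the closed manifold, accounting for the final $2^{n-1}$. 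Verifying that the root of unity $q=e^{2\pi i/r}$ entering the RT construction produces $q^2 = e^{2\pi i/(N+\frac12)}$ in the coloured Jones polynomial, as $r=2N+1$, completes the proof.
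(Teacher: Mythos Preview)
The paper does not prove this statement at all: Theorem~\ref{relationship} is quoted from \cite{DKY17} and used as a black box throughout (see Sections~\ref{W021041tv} and~\ref{strategy}). So there is no ``paper's own proof'' to compare against; your sketch is effectively an attempt to reproduce the argument of Detcherry--Kalfagianni--Yang rather than anything in this paper.

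That said, your outline is broadly the right one and matches the strategy in \cite{DKY17}: identify $TV_r(M)$ with the squared norm of the Witten--Reshetikhin--Turaev vector $Z_r(M)$ in the TQFT Hilbert space of $\partial M$ (this is the Roberts/Benedetti--Petronio/Turaev result you invoke), expand $Z_r(\SS^3\setminus L)$ in the torus basis to obtain coloured Jones polynomials, and use the symmetry of $J_{\vec M}$ under $M_j\mapsto r-M_j$ to halve the summation range. Where your write-up is shakiest is the accounting for the $2^{n-1}$: the explanation that ``one copy of this $2$ is absorbed into the global framing/Kirby-colour normalization'' is not really how it arises. In \cite{DKY17} the factor comes from the Hermitian form on $V(T^2)^{\otimes n}$ together with the colour symmetry, and the loss of one factor of $2$ reflects that the Hermitian pairing is taken once on the full vector rather than componentwise. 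If you intend this as a genuine proof rather than a plausibility sketch, that step and the precise $\eta$-bookkeeping need to be written out carefully; as it stands it would not pass as a proof, but since the present paper only cites the result, nothing more is required here.
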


Theorem~\ref{relationship} provides a bridge between the volume conjecture of the colored Jones polynomials of links and the volume conjecture of the Turaev-Viro invariants of link complements. In particular, in \cite{DKY17}, the volume conjecture of the Turaev-Viro invariants is extended to non-hyperbolic link complement and this generalization is proved for all knots with zero simplicial volume. 

\begin{conjecture}\label{tvGM}\cite{DKY17}
For every link $L$ in $\SS^3$, we have
$$  \lim_{\substack{ r\to \infty \\r \text{ odd}}} \frac{2\pi }{r} \log(TV_r(\SS^3 \backslash L, e^{\frac{2\pi i}{r}})) = v_3 ||\SS^3 \backslash L||$$
\end{conjecture}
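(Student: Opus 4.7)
The strategy (which yields the conjecture for the specific families of links constructed in this paper) is to combine Theorem~\ref{relationship} with the volume conjecture at the $(N+\frac{1}{2})$-th root of unity proved earlier in the paper for those links. By Theorem~\ref{relationship} with $r = 2N+1$,
$$TV_{r}(\SS^3 \backslash L, e^{2\pi i/r}) = 2^{n-1}\left( \frac{2\sin(2\pi/r)}{\sqrt{r}}\right)^{2} \sum_{1\leq \vec{M} \leq N}\left|J_{\vec{M}}\left(L,e^{2\pi i/(N+\frac{1}{2})}\right)\right|^2.$$
The prefactor is polynomial in $r$, so up to $O((\log r)/r)$ corrections, $\frac{2\pi}{r}\log TV_r$ equals the exponential growth rate of the sum, and it suffices to show that this rate equals $v_3 ||\SS^3 \backslash L||$.

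For the lower bound, the plan is to restrict the sum to the single diagonal term $\vec{M} = \vec{N} = (N, \ldots, N)$. The main asymptotic result of the paper gives $\lim_{N\to\infty} \frac{2\pi}{N+\frac{1}{2}}\log|J_{\vec N}(L; e^{2\pi i/(N+\frac{1}{2})})| = v_3 ||\SS^3 \backslash L||$. Combined with the identity $\frac{2\pi}{r}\log|J_{\vec N}|^2 = \frac{2\pi}{N+\frac{1}{2}}\log|J_{\vec N}|$, this immediately yields $\liminf_{r \text{ odd}} \frac{2\pi}{r}\log TV_r(\SS^3 \backslash L, e^{2\pi i/r}) \ge v_3 ||\SS^3 \backslash L||$.

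For the upper bound, estimate the sum crudely by $\sum_{\vec M} |J_{\vec M}|^2 \le N^n \max_{\vec M} |J_{\vec M}|^2$, so that the polynomial factor $N^n$ is absorbed into the $\frac{2\pi}{r}\log$ normalization. It then suffices to show $\limsup_{N\to\infty} \frac{2\pi}{N+\frac{1}{2}} \log \max_{\vec M} |J_{\vec M}(L; e^{2\pi i/(N+\frac{1}{2})})| \le v_3 ||\SS^3 \backslash L||$. The plan here is to invoke the generalized volume conjecture established in the second part of the paper, which for any sequence with $\vec M/(N+\frac{1}{2}) \to \vec s$ identifies the exponential growth rate of $|J_{\vec M}|$ with the sum, over the hyperbolic JSJ pieces, of the volumes of the associated incomplete hyperbolic structures parametrized by $\vec s$. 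A Schl\"afli-type argument then shows that this sum of volumes is maximized at the parameter corresponding to the complete hyperbolic structure, where its value equals $v_3 ||\SS^3 \backslash L||$. The main obstacle I anticipate is upgrading the pointwise asymptotic identification into a bound uniform in $\vec M$ over the full admissible region of colorings; one must control subexponential factors and handle colorings approaching degenerate strata where the associated hyperbolic deformation collapses, in order to secure the desired $\limsup$ inequality.
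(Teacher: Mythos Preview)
Your lower bound argument is correct and matches the paper exactly: restrict to the single term $\vec{M}=\vec{N}$ and invoke Theorem~\ref{mainthm1} (or Theorem~\ref{mainthm5}).

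Your upper bound route, however, diverges from the paper and the obstacle you flag is fatal for the approach as written. The generalized volume conjecture proved here (Theorems~\ref{mainthm3} and~\ref{mainthm2}) only controls $|J_{\vec M}|$ for limiting ratios $s_i\in(1-\delta,1]$; it says nothing about colorings $\vec M$ with $M_i/(N+\tfrac12)$ bounded away from $1$, and the Turaev--Viro sum ranges over all $1\le\vec M\le N$. A Schl\"afli argument on the volume function cannot help if you have no asymptotic identification for those colorings in the first place. Upgrading to a uniform bound over the full color range is not a technicality here---it is essentially the whole content of the upper bound.

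The paper's method is more elementary and bypasses this entirely. Rather than going through any deformation-of-structures argument, it uses the explicit termwise estimates of Lemmas~\ref{uppbd1} and~\ref{uppbdW}, which bound the quantum-factorial building blocks of $J_{\vec M}$ \emph{uniformly over all admissible indices}, with equality only at the critical configuration corresponding to the complete structure. Since the colored Jones polynomial of each link in these families is a sum of products of exactly these blocks, one gets $\limsup_N \tfrac{2\pi}{N+1/2}\log|J_{\vec M}|\le v_3\|\SS^3\backslash L\|$ for every $\vec M$ directly, with no need for the generalized volume conjecture or uniformity upgrades. The squeeze then finishes as in Section~\ref{W021041tv}.
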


Due to Theorem~\ref{relationship}, it is natural to study the analogue of Conjecture~\ref{cvcK} and~\ref{cvcMM} at the new root $t=e^{\frac{2\pi i}{N+\frac{1}{2}}}$. In \cite{DKY17}, R. Detcherry, E. Kalfagianni and T. Yang asked the following question,  
\begin{conjecture}\label{NVC}(Question 1.7 in \cite{DKY17}) Is it true that for any hyperbolic link $L$ in $\SS^3$, we have
$$ \lim_{N\to \infty} \frac{2\pi}{N+\frac{1}{2}} \log|J_{\vec{N}}(L,e^{\frac{2\pi i}{N+\frac{1}{2}}}) | = \Vol(\SS^3 \backslash L) \quad ?$$
\end{conjecture}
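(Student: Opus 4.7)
The plan is to attack Conjecture~\ref{NVC} by adapting the saddle point method of Kashaev and Murakami--Murakami to the new root of unity $t = e^{\frac{2\pi i}{N+\frac{1}{2}}}$. Since the conjecture in full generality appears out of reach with current techniques, my strategy is first to verify it for infinite families of hyperbolic links for which one has explicit integral representations of the colored Jones polynomial. A natural source of such families is satellite operations: starting with a hyperbolic knot such as the figure eight and applying cabling by the Whitehead chains, or iterated Whitehead doubling, one obtains tractable $R$-matrix state sums for $J_{\vec{N}}(L, e^{\frac{2\pi i}{N+\frac{1}{2}}})$ built from quantum factorial ratios.

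For each such family I would proceed in three steps. First, rewrite the colored Jones polynomial at the relevant root of unity as a finite sum of quantum $6j$-symbols or $R$-matrix entries, and apply the Poisson summation formula to convert the sum into a contour integral of the form
\begin{align*}
\int_{\Gamma} e^{\frac{N+1/2}{2\pi i}\,\Phi(\vec{z})}\, d\vec{z},
\end{align*}
where $\Phi$ is a potential function built from the dilogarithm $\Li$. Second, analyze the saddle point equations $\nabla \Phi = 0$ and match them with the hyperbolic gluing equations of an ideal triangulation of $\SS^{3} \backslash L$; the complete hyperbolic structure should correspond to a distinguished critical point $\vec{z}_{0}$. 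Third, verify that $\frac{1}{2\pi}\im \Phi(\vec{z}_{0}) = \Vol(\SS^{3}\backslash L)$ using the standard formula for the volume of an ideal hyperbolic tetrahedron in terms of $\im \Li$.

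The main obstacle will be the rigorous application of the saddle point formula: one must verify that the contour of integration can be deformed so as to pass through the geometric saddle along a path of steepest descent, and that competing critical points as well as boundary contributions are strictly subdominant. In the new-root-of-unity setting, the potential function $\Phi$ differs in detail from the one at $e^{\frac{2\pi i}{N}}$, so the location and dominance of the geometric saddle must be re-established from scratch; moreover, when $\SS^{3} \backslash L$ has several hyperbolic pieces in its JSJ decomposition, the potential $\Phi$ decomposes into contributions from each piece, and one has to check that their saddle points glue consistently along the separating tori. A secondary subtlety is showing that the polynomial prefactor from the Gaussian integral and the subleading corrections vanish on the exponential scale, so that the growth rate is genuinely $\Vol(\SS^{3} \backslash L)$ rather than merely bounded by it.
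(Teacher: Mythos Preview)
There is a conceptual mismatch between the statement you are trying to address and the examples you propose. Conjecture~\ref{NVC} is formulated specifically for \emph{hyperbolic} links, i.e.\ links whose complement carries a complete finite-volume hyperbolic structure and in particular contains no essential torus. The families you suggest as test cases---cablings of $4_1$ by Whitehead chains and iterated Whitehead doubles---are satellite links; the satellite torus is essential, so $\SS^3\backslash L$ has a nontrivial JSJ decomposition with several hyperbolic pieces, and $L$ is \emph{not} hyperbolic. You seem aware of this when you write that ``the potential $\Phi$ decomposes into contributions from each piece,'' but then these links simply do not fall under Conjecture~\ref{NVC}. What they do fall under is the simplicial-volume extension, Conjecture~\ref{NVCMM}, where $\Vol$ is replaced by $v_3\|\SS^3\backslash L\|$.

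The paper does not prove Conjecture~\ref{NVC}; it is recorded there as an open question. What the paper does prove, using exactly the machinery you outline (quantum dilogarithm, Poisson summation, saddle-point approximation, matching the critical value to a sum of hyperbolic volumes of the JSJ pieces), is Conjecture~\ref{NVCMM} for the satellite families $W_{a,1,c,d}(4_1)$, $W_{0,1,1,0}^{p+1}(4_1)$ and $W^\alpha_\beta$. So your three-step methodology is essentially the paper's approach, but it is being applied to the wrong target: it yields instances of Conjecture~\ref{NVCMM}, not of Conjecture~\ref{NVC}. If you want genuine evidence for Conjecture~\ref{NVC} itself, you must work with links whose complement is a single hyperbolic piece (e.g.\ $4_1$ or the Whitehead link directly), and then the JSJ discussion is vacuous.
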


Similar to Conjecture~\ref{cvcMM}, we extend the above conjecture to non-hyperbolic links as follows.

\begin{conjecture}\label{NVCMM} Let $L$ be a link and $J_{\vec{N}}(L,t)$ be the $\vec{N}$-th colored Jones polynomials of $L$ evaluated at $t$. We have
\begin{align*}
\lim_{N \to \infty} \frac{2\pi}{N+\frac{1}{2}} \log|J_{\vec{N}}(L;e^{\frac{2\pi i}{N+\frac{1}{2}}})| = v_3||\SS^3 \backslash L||,
\end{align*}
where $||\SS^3 \backslash L||$ is the simplicial volume of the link complement.
\end{conjecture}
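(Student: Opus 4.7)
The plan is to leverage the additivity of the simplicial volume under gluing along incompressible tori to reduce Conjecture~\ref{NVCMM} to a statement about the JSJ pieces of $\SS^3 \setminus L$. If the JSJ decomposition is $\SS^3 \setminus L = M_1 \cup_T \cdots \cup_T M_k$ with each $M_i$ either hyperbolic or Seifert-fibered, then by Gromov--Thurston $v_3 \| \SS^3 \setminus L \| = \sum_{M_i \text{ hyperbolic}} \Vol(M_i)$, so the target is to show that the asymptotic growth rate of $|J_{\vec{N}}(L; e^{2\pi i/(N+\frac{1}{2})})|$ matches this sum of hyperbolic volumes.

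First I would reduce to prime links via connected sums: for $L = L_1 \# L_2$, the colored Jones polynomial factors (up to normalization) as $J_{\vec{N}}(L_1 \# L_2; t) = J_{\vec{N}}(L_1; t) J_{\vec{N}}(L_2; t) / J_N(U; t)$, and simplicial volume is additive under connected sum, so Conjecture~\ref{NVCMM} for $L$ follows from it for $L_1$ and $L_2$. Second, for prime links whose JSJ decomposition contains only Seifert-fibered pieces (so $\| \SS^3 \setminus L \| = 0$), I would show using Habiro's cabling formula and the polynomial growth of quantum integers at $t = e^{2\pi i/(N+\frac{1}{2})}$ that $|J_{\vec{N}}(L;t)|$ grows only subexponentially. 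Third, for prime links with at least one hyperbolic piece, I would express $J_{\vec{N}}(L; t)$ via Habiro's satellite formulas as a multi-sum over colors associated to each hyperbolic building block, and carry out a saddle point analysis on the resulting integral representation. This is precisely the strategy executed in the present paper for the Whitehead-chain cablings of the figure-eight knot and for the iterated Whitehead doubles of the figure-eight knot and Hopf link: the critical values of the potential function recover the sum of volumes of the hyperbolic pieces under natural (possibly incomplete) hyperbolic structures, and the boundary parameters at glued JSJ tori are determined by the Dehn-filling equations of the satellite construction.

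The main obstacle is that there is no known structural decomposition of the colored Jones polynomial that mirrors the JSJ decomposition of the complement, so every family of links requires a bespoke construction of a potential function and a fresh saddle point analysis. The most delicate step is the lower bound: one must exhibit a non-degenerate critical point of the potential whose value equals the sum of hyperbolic volumes, match its coordinates with a consistent hyperbolic gluing/Dehn-filling solution across the pieces, and rule out cancellation against contributions from other critical points. A genuinely general proof of Conjecture~\ref{NVCMM} would likely require either a quantum-topological analogue of the JSJ decomposition for the Reshetikhin--Turaev TQFT at the root $e^{2\pi i/(N+\frac{1}{2})}$, or uniform analytic control over Habiro's multi-sums across arbitrary satellite patterns, neither of which appears to be available by present methods.
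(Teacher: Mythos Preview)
The statement you are attempting to prove is labeled as a \emph{conjecture} in the paper, not a theorem, and the paper does not contain a proof of it. The paper only \emph{verifies} Conjecture~\ref{NVCMM} for certain explicit families of links (the $W_{a,1,c,d}(4_1)$, the iterated Whitehead doubles $W_{0,1,1,0}^{p+1}(4_1)$, the links $W^\alpha_\beta$, and Hopf unions of figure-eight knots), in each case by an ad hoc computation of the colored Jones polynomial followed by a saddle point analysis of the resulting potential function. There is no general argument in the paper against which to compare your proposal.

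Your proposal is not a proof either, and you say as much in your final paragraph. The three-step outline (reduce to prime, handle Seifert-fibered pieces, do saddle point on hyperbolic pieces) is a reasonable organizing principle, but each step hides a genuine open problem. Step two is not known: the subexponential growth of $|J_{\vec N}(L;e^{2\pi i/(N+\frac12)})|$ for links with zero simplicial volume is itself a special case of the conjecture, and ``Habiro's cabling formula and polynomial growth of quantum integers'' does not settle it for arbitrary Seifert-fibered link complements. Step three is the heart of the matter and, as you correctly identify, there is no known structural decomposition of the colored Jones polynomial that tracks the JSJ decomposition; the paper's contribution is precisely to carry out the bespoke analysis for a handful of new families, not to provide a general mechanism. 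So your write-up should be read as a heuristic roadmap toward the conjecture rather than a proof attempt, and it would be more accurate to present it that way.
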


Another direction of generalizing the volume conjecture is to study the asymptotic behavior of the values of the $N$-th colored Jones polynomials near the $N$-th root of unity. It is expected that the exponential growth rate has to do with the volume of the link complement with certain incomplete hyperbolic metric \cite{G05, GM08, HM13}. Unfortunately, at $t=e^{\frac{2\pi i}{N}}$, this generalization is known to be false in general for the colored Jones polynomials of links in $\SS^3$ (see for example \cite{W19}). For knots, due to the choice of the root of unity, the values of the $N$-th colored Jones polynomials of the figure eight knot grow only polynomially at $t=e^{\frac{2\pi i r}{N}}$ for rational number $r \in (\frac{5}{6}, \frac{7}{6})$ \cite{MY07}. Nevertheless, if we allow the ambient manifold to be the connected sum of $\SS^2 \times \SS^1$, this generalization is known to be true for the fundamental shadow links \cite{C07}. 

Recently, in \cite{W19}, the author proposes the following version of generalized volume conjecture at the new root of unity $t=e^{\frac{2\pi i}{N+\frac{1}{2}}}$. In particular, Conjecture~\ref{GVC} implies Conjecture~\ref{NVC}. 

\begin{conjecture}\label{GVC}(Generalized volume conjecture for link) 
Let $L$ be a hyperbolic link with $n$ component. Let $M_1,M_2,\dots, M_n$ be sequence of positive integers in $N$. Let $s_i =\lim_{N\to \infty} \frac{M_i}{N+\frac{1}{2}}$. Then there exists $\delta_L > 0$ such that whenever the limiting ratio $s_i \in (1-\delta_L, 1]$, we have
$$ \lim_{N \to \infty} \frac{2\pi }{N+\frac{1}{2}} \log|J_{M_1,M_2,\dots, M_n}(L, e^{\frac{2\pi i}{N+\frac{1}{2}}}) | = \Vol\lt(\SS^3 \backslash L, u_i = 2\pi i (1 - s_i)\rt), $$
where $ \Vol\lt(\SS^3 \backslash L, u = 2\pi i (1 - s)\rt)$ is the volume of $\SS^3 \backslash L$ equipped with the hyperbolic structure such that the logarithm of the holonomy of the meridian around the $i$-th component is given by $u_i = 2\pi i (1-s_i)$. 
\end{conjecture}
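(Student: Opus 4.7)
The plan is to attack this via saddle-point asymptotics of a suitable state-sum formula for $J_{M_1,\dots,M_n}(L,q)$ at $q=e^{2\pi i/(N+\frac{1}{2})}$, in the spirit of Kashaev's and the Murakamis' analysis of the figure-eight knot. First I would fix a triangulation (or shadow state-sum presentation) of $\SS^3\setminus L$ adapted to the link diagram, and express the colored Jones polynomial at the $(N+\tfrac{1}{2})$-th root of unity as a finite multi-sum of ratios of quantum factorials. Applying the Poisson summation formula, this is traded for an oscillatory integral of the form
\begin{align*}
J_{\vec{M}}\!\left(L,e^{\frac{2\pi i}{N+\frac{1}{2}}}\right)\;\sim\;\int \exp\!\left(\frac{N+\frac{1}{2}}{2\pi i}\,\Phi(\vec{z};\vec{s})\right)d\vec{z},
\end{align*}
where $\Phi$ is built from Euler dilogarithms and depends analytically on the parameters $\vec{s}=(s_1,\dots,s_n)$ that record the limiting ratios.

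Next I would identify the critical point equations $\partial_{z_j}\Phi=0$ with the Thurston gluing equations of the chosen ideal triangulation, \emph{deformed} so that the logarithm of the meridional holonomy around the $i$-th component equals $u_i=2\pi i(1-s_i)$. At $\vec{s}=(1,\dots,1)$ these recover the complete hyperbolic structure, which has a unique non-degenerate geometric solution; Thurston's hyperbolic Dehn surgery theorem then guarantees, for $\vec{s}$ in some small neighborhood $(1-\delta_L,1]^n$, a smooth family of non-degenerate critical points $\vec{z}^{*}(\vec{s})$ tracking the nearby incomplete hyperbolic structures. Combining the Neumann--Zagier potential with Yoshida's formula, one matches
\begin{align*}
\Re\!\left(\frac{\Phi(\vec{z}^{*}(\vec{s});\vec{s})}{2\pi i}\right)\;=\;\frac{1}{2\pi}\,\Vol\!\left(\SS^3\setminus L,\,u_i=2\pi i(1-s_i)\right),
\end{align*}
up to a sign convention, yielding the claimed exponential growth rate.

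The main obstacle, as always in this line of work, is the rigorous saddle-point step: one has to deform the contour of integration to a surface of steepest descent through $\vec{z}^{*}(\vec{s})$ while avoiding the poles of the quantum dilogarithm, and uniformly bound the tails together with any contribution from boundary-of-fundamental-domain terms generated by Poisson summation. This has been carried out in full only for a handful of examples (the figure eight, a few twist and double twist knots, and the fundamental shadow links), and the combinatorial and analytic complexity grows quickly with the link. I would therefore first restrict to families with particularly controlled triangulations, such as the iterated Whitehead doubles on the figure-eight knot and Hopf link highlighted in the introduction, where the gluing equations decouple block-wise along the JSJ pieces and the saddle-point geometry in each hyperbolic piece can be analysed separately and then reassembled. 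Making $\delta_L$ explicit (or even uniform across families) and handling the boundary case $s_i=1$, where the integrand develops endpoint singularities and the geometric limit is the cusped complete structure, would be the remaining technical hurdles.
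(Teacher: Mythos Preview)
The statement you are addressing is a \emph{conjecture} in the paper, not a theorem: the paper does not claim or provide a proof of Conjecture~\ref{GVC} in general. What the paper proves are special cases for specific link families (the figure eight knot in \cite{WA17}, Whitehead chains in \cite{W19}, and the iterated Whitehead doubles in Theorems~\ref{mainthm3} and~\ref{mainthm2}), and it uses these as evidence to formulate the more general Conjecture~\ref{GVCS}. So there is no ``paper's own proof'' to compare against.

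Your outline is the standard strategy and does match what the paper carries out in its proved cases: express the colored Jones polynomial as a multi-sum of quantum factorials, convert to quantum dilogarithms, identify a potential function whose critical points encode the deformed gluing equations, and apply Poisson summation plus saddle-point approximation. You correctly identify the genuine obstruction: the rigorous saddle-point step (contour deformation, control of non-leading Fourier modes, tail estimates) is exactly what prevents this from being a proof in general, and you acknowledge this yourself. One point worth sharpening is that in the paper's proved cases the critical point is found not by invoking Thurston's Dehn surgery theorem abstractly, but by an explicit algebraic computation showing that the critical point in the ``cabling'' variables stays at the complete-structure value $(\tfrac{1}{2},\tfrac{1}{4},\dots)$ while only the outermost block deforms with $s$; this block-decoupling is what makes those families tractable and is more specific than a general implicit-function argument. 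Your proposal is a fair research plan, but it is not a proof, and neither is anything in the paper for the general statement.
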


Conjecture~\ref{GVC} has been proved by Thomas Au and the author in the case of the figure eight knot \cite{WA17} and proved by the author for some special cases for the Whitehead chains \cite{W19}. So far there are limited results about Conjecture~\ref{cvcMM} and~\ref{NVCMM} for prime links whose complement have more than one hyperbolic piece. To the best of the author's knowledge, the only known result for Conjecture~\ref{cvcMM} is the cabling of the figure eight knot by the Borromean rings \cite{YY10}. 

\subsection{Construction of the links}

Since the generalized volume conjecture is only well-established for the figure eight knot $4_1$ \cite{WA17} and some special cases for Whitehead chains $W_{a,b,c,d}$ \cite{W19}, in order to provide new example of links that may satisfy (some sort of) generalized volume conjecture, it is natural to do some operations on these links to construct new families of links and study the asymptotics of the corresponding colored Jones polynomials. In this paper, the operations that we are going to use are the cabling by the Whitehead chains, the iterated Whitehead double and the Hopf union.

          \begin{figure}[H]
          \centering
              \includegraphics[width=0.75\linewidth]{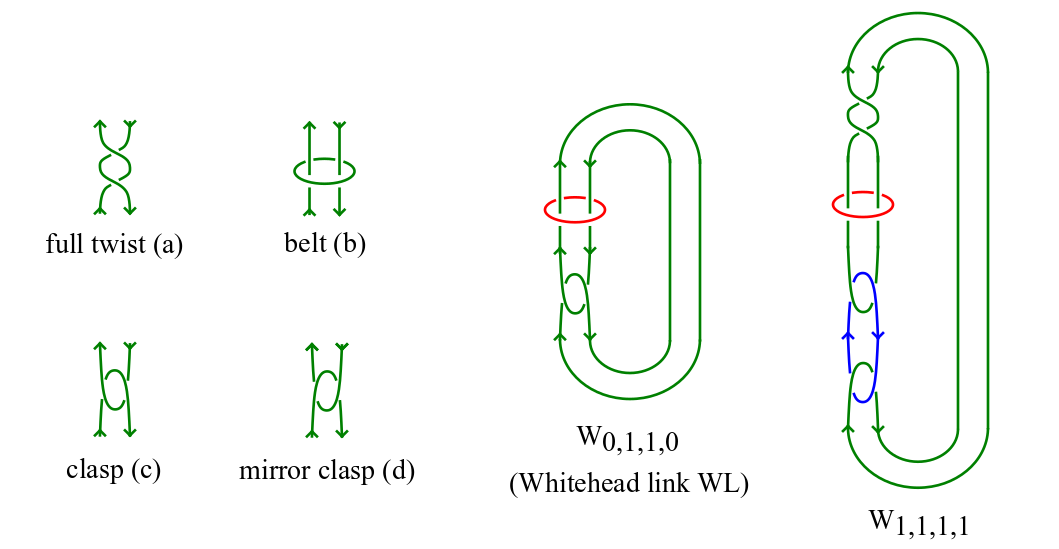}
              \caption{Left: the positive full twist, belt, clasp and mirror clasp tangles, when $a<0$, we twist the tangles in the opposite direction. Right: the links $W_{0,1,1,0}$ (also denoted by $WL$) and $W_{1,1,1,1}$}\label{fullpicture}
          \end{figure}

For $a\in \ZZ, b \in \NN, c,d\in \NN\cup\{0\}$ with $c+d \geq 1$, recall that the Whitehead chains $W_{a,b,c,d}$ (defined in \cite{V08}) is obtained by stacking $a$ full twists, $b$ belts, $c$ clasps and $d$ mirror clasps together and then taking the closure (Figure~\ref{fullpicture}).

The first family of links $W_{a,1,c,d}(4_1)$ is obtained as follows. Recall that $\SS^3$ has a standard genus one Heegaard splitting consisting of two solid tori glued along the red torus boundaries (Figure~\ref{d1} left). From this, we can see that, for the link complement $\SS^3\backslash W_{0,1,1,0}$, if we remove the tubular neighborhood of the belt, we will obtain a red solid torus with a green clasp removed (Figure~\ref{d1} right).

          \begin{figure}[H]
              \centering
              \includegraphics[width=\linewidth]{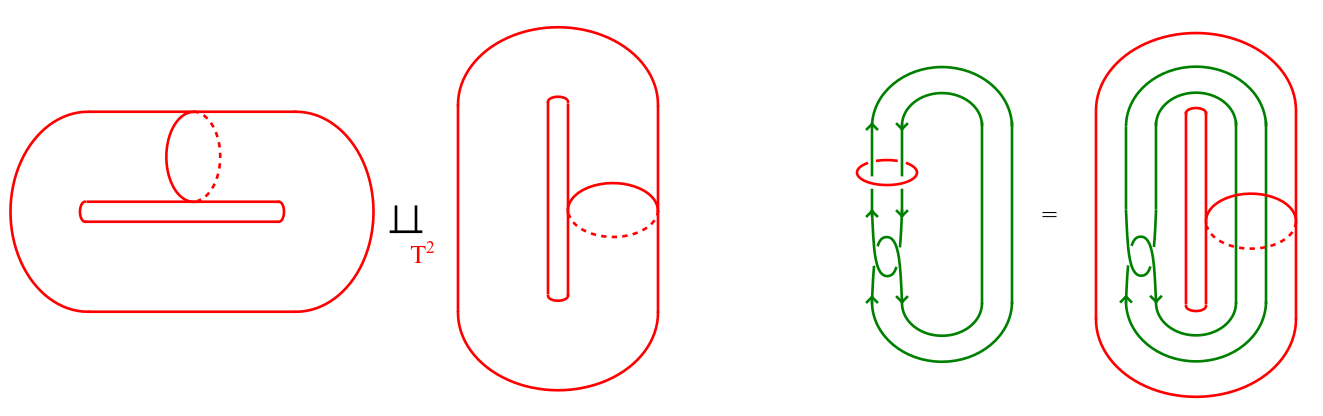}
              \caption{The standard genus one Heegaard splitting of $\SS^3$ (left) and the Whitehead link complement (right)}\label{d1}
          \end{figure}
          
Besides, we consider the complement of the tubular neighborhood of the figure eight knot $4_1$. Then we can glue the manifolds $\SS^3\backslash W_{0,1,1,0}$ and $\SS^3 \backslash N(4_1)$ to obtain the Whitehead double of the figure eight knot (Figure~\ref{wd41}).

          \begin{figure}[H]
              \centering
              \includegraphics[width=0.9\linewidth]{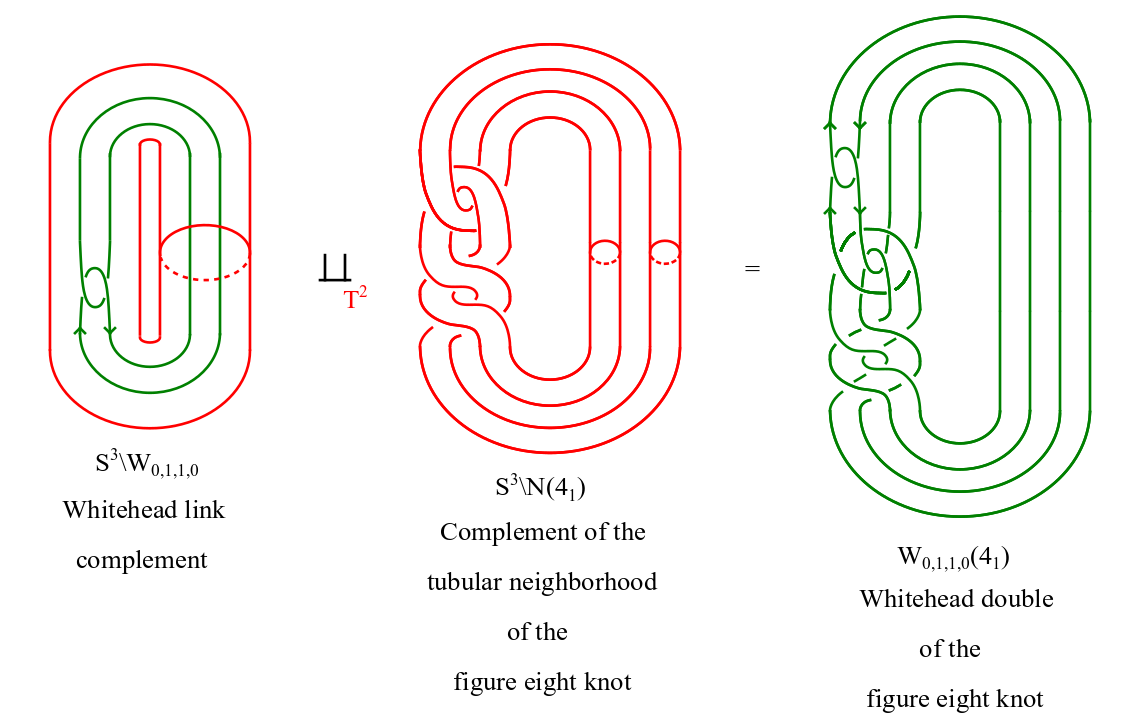}
              \caption{The Whitehead double of the figure eight knot $W_{0,1,1,0}(4_1)$ is obtained by gluing $\SS^3\backslash W_{0,1,1,0}$ and $\SS^3\backslash N(4_1)$ together along the red torus boundary}\label{wd41}
          \end{figure}

Moreover generally, let $W_{a,1,c,d}$ be the Whitehead chains with $a$ twists, $1$ belt, $c$ clasps and $d$ mirror clasps \cite{V08}. For the same reason, we can cable the Whitehead chains $W_{a,1,c,d}$ onto the tubular neighborhood of $4_1$. The resulting link is denoted by $W_{a,1,c,d}(4_1)$ (see for example Figure~\ref{W111141}).

          \begin{figure}[H]
          \centering
              \includegraphics[width=0.9\linewidth]{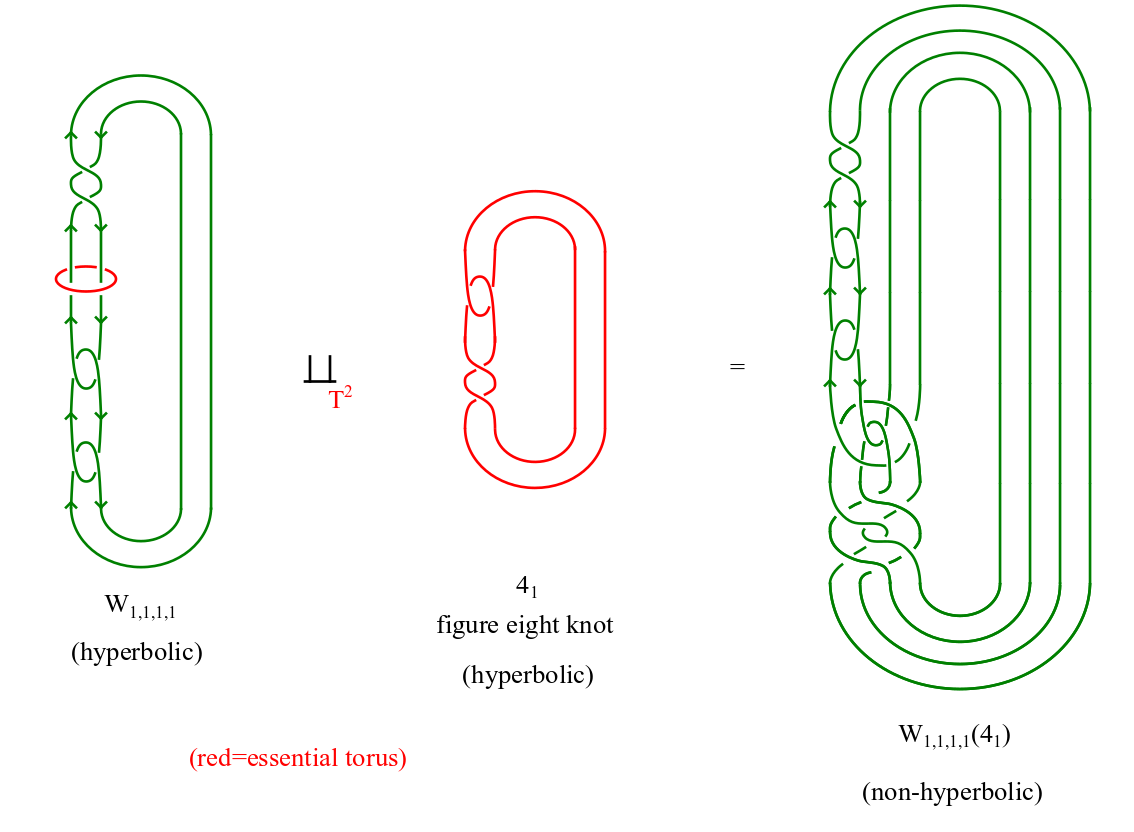}
              \caption{The link $W_{1,2,1,1}(4_1)$. For simplicity, we use the color red to highlight the essential torus.}\label{W111141}
          \end{figure}

Besides, for $p\geq 0$, we consider the $(p+1)$-th iterated Whitehead double of the figure eight knot, denoted by $W_{0,1,1,0}^{p+1}(4_1)$. 

Note that the JSJ decomposition of the manifolds $\SS^3 \backslash W_{a,1,c,d}(4_1)$ and $\SS^3 \backslash W_{0,1,1,0}^{p+1}(4_1)$ are given by
\begin{align*}
\SS^3 \backslash W_{a,1,c,d}(4_1) &\cong (\SS^3 \backslash 4_1) \coprod_{T^2} (\SS^3 \backslash W_{a,1,c,d})  \\
\SS^3 \backslash W_{0,1,1,0}^{p+1}(4_1) &\cong (\SS^3 \backslash 4_1) \coprod_{T^2} \underbrace{\SS^3 \backslash W_{0,1,1,0} \coprod_{T^2} \dots \coprod_{T^2} \SS^3 \backslash W_{0,1,1,0}}_{\text{$p+1$ times}} 
\end{align*}
with simplicial volumes
\begin{align*}
||\SS^3 \backslash W_{a,1,c,d}(4_1)|| &=  ||\SS^3 \backslash 4_1|| + ||\SS^3 \backslash W_{a,1,c,d}|| \\
||\SS^3 \backslash W_{0,1,1,0}^{p+1}(4_1)|| &= ||\SS^3 \backslash 4_1||  + (p+1)||\SS^3 \backslash W_{0,1,1,0}||
\end{align*}
respectively. In particular, $\SS^3 \backslash W_{0,1,1,0}^{p+1}(4_1)$ contains $p+2$ hyperbolic pieces (see for example Figure~\ref{wd41}).

The third family of links $W^\alpha_\beta$ is constructed by doing the iterated Whitehead double on the Hopf link as follows. Consider the Hopf link denoted by $W^0_0$ as shown in Figure~\ref{walphabeta}. The link $W^\alpha_\beta$ is obtained by doing the Whitehead double $\alpha$ times along the first component (colored by red) and $\beta$ times along the second component (colored by blue). Under this notation, the Whitehead link is denoted by $W^1_0$ or $W^0_1$. Figure~\ref{walphabeta} shows the diagrams of the links $W^0_0, W^1_0, W^1_1$ and $W^2_1$.

          \begin{figure}[H]
          \centering
              \includegraphics[width=0.8\linewidth]{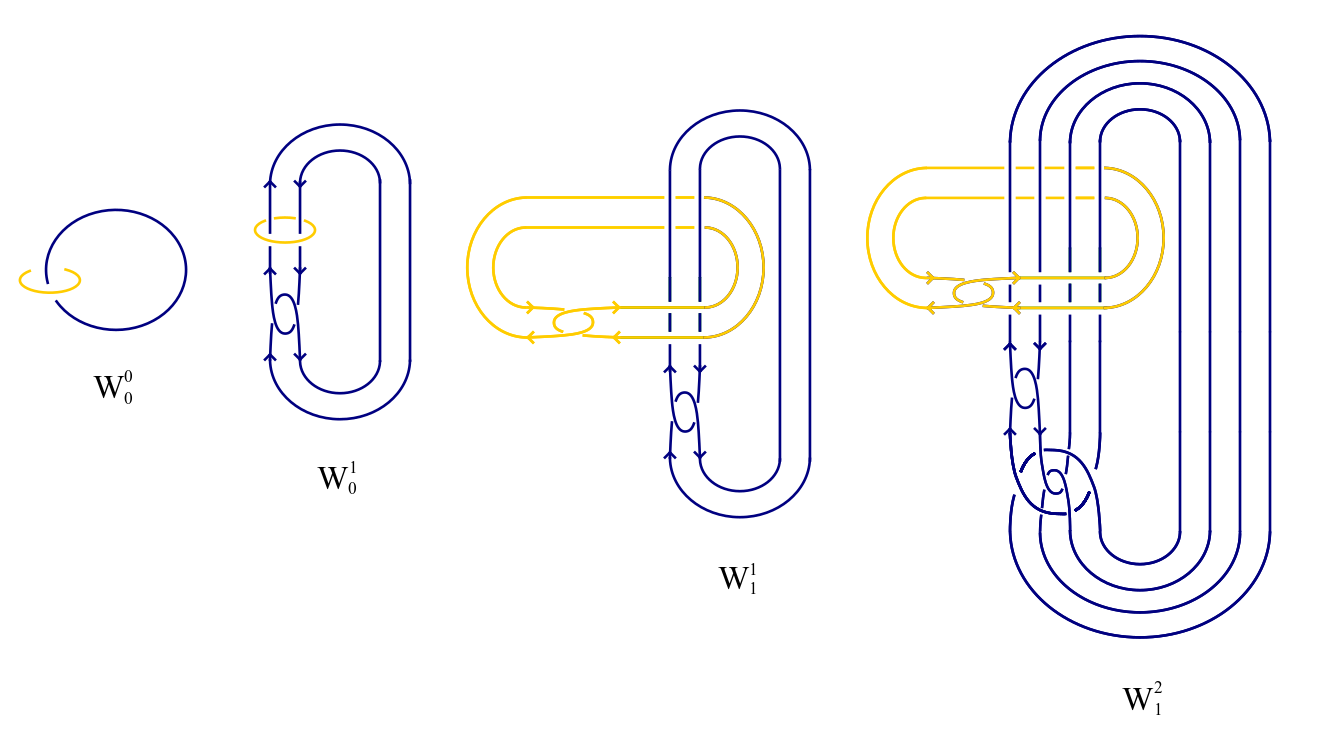}
              \caption{The link $W^\alpha_\beta$ is obtained by doing the Whitehead double $\alpha$ times along the first component (colored by yellow) and $\beta$ times along the second component (colored by blue).}\label{walphabeta}
          \end{figure}

Similar to previous discussion, the JSJ decomposition of the manifold $\SS^3 \backslash W^\alpha_\beta$ is given by
$$ 
\SS^3 \backslash W^\alpha_\beta
\cong
\underbrace{\SS^3 \backslash W_{0,1,1,0}  \coprod_{T^2} \dots  \coprod_{T^2} \SS^3 \backslash W_{0,1,1,0}  }_{\text{$\alpha + \beta$ times}}
$$
with simplicial volume
$$ ||\SS^3 \backslash W^\alpha_\beta|| = (\alpha + \beta)||\SS^3 \backslash W_{0,1,1,0}|| $$

In particular, $\SS^3 \backslash W^\alpha_\beta$ contains $(\alpha + \beta)$ hyperbolic pieces.

Finally, motivated by the property of the colored Jones polynomials, we consider another type of construction of link called Hopf union. Let $K_1$ and $K_2$ be two knots. Consider the connected sum of $K_1$ with the yellow component of the Hopf link and the connected sum of $K_2$ with the blue component of the Hopf link. We call the resulting link by the Hopf union of $K_1$ and $K_2$ and denote it by $K_1 \# W^0_0 \# K_2$. Figure~\ref{41H41} below shows the example of $4_1 \# W^0_0 \# 4_1$. To obtain the JSJ decomposition of $\SS^3\backslash (K_1 \# W^0_0 \# K_2)$, we embed two tori as shown in Figure~\ref{K1HK2}.

\begin{minipage}{\linewidth}
      \centering
            \begin{minipage}[b]{0.3\linewidth}
          \begin{figure}[H]
              \includegraphics[width=\linewidth]{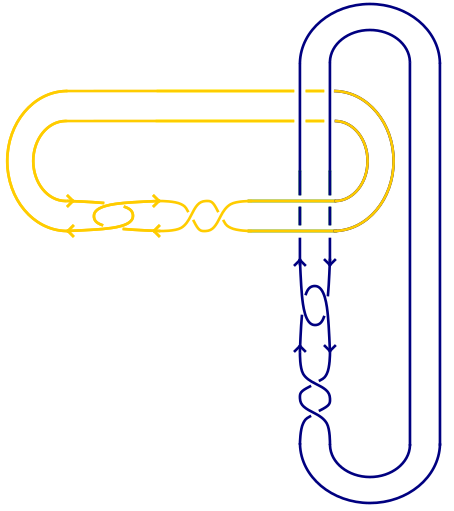}\vspace{-12pt}
          \end{figure}
      \end{minipage}
                  \hspace{0.15\linewidth}
            \begin{minipage}[b]{0.4\linewidth}
          \begin{figure}[H]
          \centering
              \includegraphics[width=\linewidth]{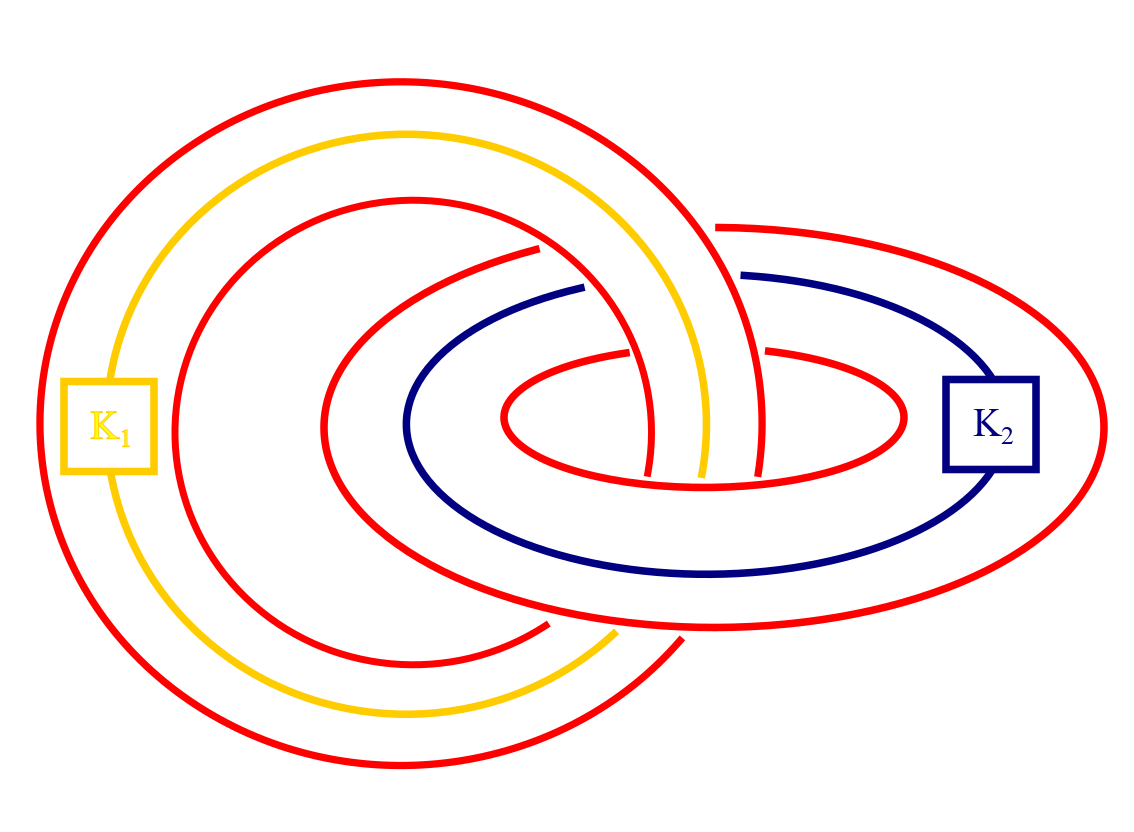}\vspace{-12pt}
          \end{figure}
      \end{minipage}
                 \hspace{0.05\linewidth}
                 \vspace{12pt}
  \end{minipage}
  \begin{minipage}{\linewidth}
      \centering
            \begin{minipage}[t]{0.45\linewidth}
          \begin{figure}[H]
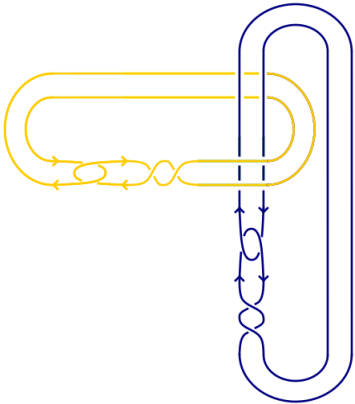

              \caption{The link $4_1 \# W^0_0 \# 4_1$}\label{41H41}
          \end{figure}
      \end{minipage}
                  \hspace{0.05\linewidth}
            \begin{minipage}[t]{0.4\linewidth}
          \begin{figure}[H]
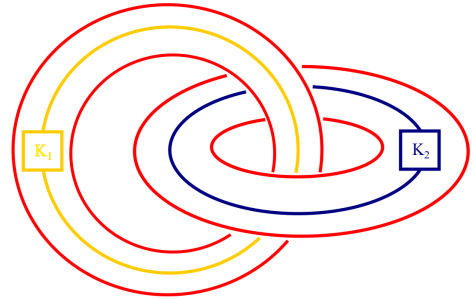

          \centering
              \caption{Embed the red tori to the link complement}\label{K1HK2}
          \end{figure}
      \end{minipage}
                 \hspace{0.05\linewidth}
  \end{minipage}

Let $P=(\DD^2 \backslash \{(-\frac{1}{2},0),(\frac{1}{2},0)\}) \times \SS_1$. Then one can show that the JSJ decomposition of $\SS^3\backslash (K_1 \# W^0_0 \# K_2)$ is given by
$$ \SS^3 \backslash (K_1 \# W^0_0 \# K_2) \cong (\SS^3 \backslash W_0^0)  \coprod_{T^2} (\SS^3 \backslash K_1)  \coprod_{T^2} P  \coprod_{T^2} (\SS^3 \backslash K_2)  \coprod_{T^2} P  $$
with simplicial volume
$$ || \SS^3 \backslash (K_1 \# W^0_0 \# K_2) || = ||\SS^3 \backslash K_1|| + ||\SS^3 \backslash K_2|| $$

\subsection{Main results}

\subsubsection{Volume conjectures for the quantum invariants of $W_{a,b,c,d}(4_1)$}
First of all, we study the asymptotics of the $\vec{N}$-th colored Jones polynomials and the $r$-th Turaev-Viro invariants of $W_{a,1,c,d}(4_1)$, where $a\in \ZZ, c, d\in\ZZ_{\geq 0}$ with $c+d\geq 1$. The first result can be stated as follows. 

\begin{theorem}\label{mainthm1}
Conjecture~\ref{NVCMM} is true for all $W_{a,1,c,d}(4_1)$ with $a\in \ZZ$, $c+d \geq 1$. 
\end{theorem}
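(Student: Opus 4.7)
The plan is to combine a cabling expansion of the colored Jones polynomial with a saddle point analysis that draws on the generalized volume conjecture for each of the two building blocks $4_1$ and $W_{a,1,c,d}$. First I would apply the standard cabling formula to write
\begin{align*}
J_{\vec{N}}\bigl(W_{a,1,c,d}(4_1);\, t\bigr) = \sum_{M=1}^{N} C_{M,\vec{N}}(t)\, J_M(4_1; t),
\end{align*}
where $C_{M,\vec{N}}(t)$ is built from the colored Jones polynomial of $W_{a,1,c,d}$, viewed in a solid torus, with the belt component colored by $M$ and the remaining components by $\vec{N}$. This separates the contributions of the two JSJ pieces and introduces a single new summation variable $M$ corresponding to the gluing torus.

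Next I would substitute the known asymptotic behavior at the root $t = e^{2\pi i/(N+1/2)}$. For a sequence $M(N)$ with $M/(N+\frac{1}{2}) \to s$ in a left neighborhood of $1$, the theorem of Au and the author \cite{WA17} gives
\begin{align*}
\frac{2\pi}{N+\frac{1}{2}}\log\bigl| J_M(4_1; t) \bigr| \longrightarrow \Vol\bigl(\SS^3 \backslash 4_1, u(s)\bigr), \quad u(s) = 2\pi i(1-s),
\end{align*}
while the methods of \cite{W19} for Whitehead chains provide the analogous exponential growth rate for $C_{M,\vec{N}}(t)$ in terms of an incomplete hyperbolic volume $\Vol(\SS^3 \backslash W_{a,1,c,d}, u(s), \vec{0})$, where $u(s)$ is the belt meridian holonomy and $\vec{0}$ reflects that the remaining colors have limit ratio $1$. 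Up to polynomially bounded prefactors the sum therefore has the shape $\sum_M \exp\bigl(\tfrac{N+1/2}{2\pi}\Phi(M/(N+\tfrac{1}{2}))\bigr)$ with
\begin{align*}
\Phi(s) = \Vol\bigl(\SS^3 \backslash 4_1, u(s)\bigr) + \Vol\bigl(\SS^3 \backslash W_{a,1,c,d}, u(s), \vec{0}\bigr).
\end{align*}
Poisson summation followed by steepest descent reduces the exponential growth rate of the sum to $\max_s \Re \Phi(s)$. By the Schlafli formula applied to each piece, the critical point equation $\Phi'(s) = 0$ is exactly the gluing condition identifying the meridian holonomy of $4_1$ on one side of the JSJ torus with the belt longitude on the other; its unique admissible solution is $s^* = 1$, at which both pieces acquire their complete hyperbolic structure and $\Phi(s^*) = \Vol(\SS^3 \backslash 4_1) + \Vol(\SS^3 \backslash W_{a,1,c,d}) = v_3 ||\SS^3 \backslash W_{a,1,c,d}(4_1)||$.

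The main obstacle is making this saddle point argument fully rigorous. The expansions in \cite{WA17, W19} are typically stated pointwise in $s$, so I would need to strengthen them to uniform estimates in a neighborhood of $s^* = 1$, with error terms that can be summed. I must also rule out contributions from values of $s$ far from the saddle point, which amounts to proving that $\Re \Phi$ attains its global maximum on the admissible interval uniquely at $s^*=1$; this should follow from the strict concavity of the Neumann-Zagier potential of each piece near the complete structure, together with a separate treatment of the boundary regime $s\to 0$ where one of the building-block invariants grows only polynomially. Finally, a matching lower bound via steepest descent requires non-degeneracy of the Hessian of $\Phi$ at $s^*$ in the imaginary direction, which holds because the derivative of the hyperbolic meridian length with respect to the holonomy is nonzero at the cusp for both $4_1$ and $W_{a,1,c,d}$.
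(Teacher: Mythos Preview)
Your plan and the paper's proof share the same architecture at the coarsest level---both exploit that the potential splits additively into a Whitehead-chain piece and a figure-eight piece coupled through one common variable---but the execution is quite different. The paper never invokes the asymptotics of $J_M(4_1)$ or of the Whitehead-chain coefficient as separate inputs. Instead it writes $J_{\vec N}(W_{a,1,c,d}(4_1))$ via the formula of \cite{HZ07,V08} as an explicit $(c{+}d{+}2)$-fold sum, converts every factor through the quantum dilogarithm into a single holomorphic potential $\Phi(W_{a,1,c,d}(4_1);z_1,\dots,z_{c+d+2})=\Phi(W_{a,1,c,d};z_1,\dots,z_{c+d+1})+\Psi(4_1;z_1,z_{c+d+2})$, and runs one multi-dimensional saddle-point argument. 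It then checks by hand that $(z_1,\dots,z_{c+d+2})=(\tfrac12,\tfrac14,\dots,\tfrac14,\tfrac56)$ is a critical point of the relevant Fourier coefficient and computes the $(c{+}d{+}2)\times(c{+}d{+}2)$ Hessian determinant explicitly to verify non-degeneracy. The upper bound away from the critical point is handled term-by-term by Lemmas~\ref{uppbd1} and~\ref{uppbdW}, bounding the modulus of each summand; no summation over an effective parameter $s$ ever occurs.

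The gap you identify in your own plan is exactly the one the paper's route sidesteps: multiplying two separate asymptotic expansions and then summing requires those expansions with \emph{uniformly} controlled multiplicative errors in the coupling parameter, which is strictly stronger than the pointwise statements available from \cite{WA17,W19}. Two further issues. First, the cabling sum runs over odd colors $M=2n+1$ with $1\le M\le 2N-1$, not $1\le M\le N$; with your stated range the putative saddle $s^*=1$ sits on the boundary and standard steepest descent does not apply, whereas in the correct range it is interior. Second, your one-variable $\Phi(s)$ is only the real (volume) part of the exponent; the saddle-point/contour-deformation step needs the full complex potential including the Chern--Simons contribution, which is precisely what the paper's dilogarithm potential encodes. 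Without the imaginary part you cannot locate the saddle in the complex $s$-plane or justify that the dominant Fourier mode is the one you expect.
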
 

As a consequence, we can prove the following result for any $b\in \NN$.
\begin{corollary}\label{cablingtv1}
Conjecture~\ref{tvGM} is ture for all $\SS^3 \backslash W_{a,b,c,d}(4_1)$ with $a\in \ZZ$, $b \in \NN$, $c+d \geq 1$.
\end{corollary}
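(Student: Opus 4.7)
The plan is to use Theorem~\ref{relationship} to reduce Conjecture~\ref{tvGM} for $\SS^3 \backslash W_{a,b,c,d}(4_1)$ to the asymptotic behaviour of the colored Jones polynomials at $t = e^{2\pi i/(N+\frac{1}{2})}$, and then to combine Theorem~\ref{mainthm1} with sharp uniform upper bounds on $|J_{\vec M}|$. Writing $r = 2N+1$ one has $\frac{2\pi}{r}\log|J_{\vec N}|^{2} = \frac{2\pi}{N+\frac{1}{2}}\log|J_{\vec N}|$, so keeping only the term $\vec M = \vec N$ in the sum of Theorem~\ref{relationship} already yields, via Theorem~\ref{mainthm1},
$$
\liminf_{\substack{r\to\infty\\ r\text{ odd}}}\frac{2\pi}{r}\log TV_{r}\lt(\SS^{3}\backslash W_{a,1,c,d}(4_1),e^{\frac{2\pi i}{r}}\rt) \;\geq\; v_{3}\,\|\SS^{3}\backslash W_{a,1,c,d}(4_1)\|,
$$
which is the lower bound needed for the $b=1$ case.

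For the matching upper bound when $b=1$, the sum in Theorem~\ref{relationship} contains only polynomially many terms in $N$, so it suffices to show
$$
\lt| J_{\vec{M}}\lt(W_{a,1,c,d}(4_1),\, e^{\frac{2\pi i}{N+\frac{1}{2}}}\rt) \rt|^{2} \;\leq\; \operatorname{poly}(N)\cdot\exp\lt(\tfrac{r}{2\pi}\,v_{3}\|\SS^{3}\backslash W_{a,1,c,d}(4_1)\|\rt)
$$
uniformly in $\vec M$ with $1\leq \vec M \leq \frac{r-1}{2}$. I would obtain this from the state-sum/integral representation of $J_{\vec M}$ already used to prove Theorem~\ref{mainthm1}: the potential function controlling the exponential growth depends continuously on the limiting ratios $s_{i}=M_{i}/(N+\frac{1}{2})\in(0,1]$, and its real part, interpreted geometrically, is the volume of $\SS^{3}\backslash W_{a,1,c,d}(4_1)$ equipped with the incomplete hyperbolic structure on each JSJ piece prescribed by $\vec s$. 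The main technical obstacle is to establish that this geometric volume is strictly maximized at $\vec s = \vec 1$ (the complete structure) and decays uniformly away from it; this is the analogue for these two-piece JSJ manifolds of Schl\"afli-type monotonicity and is what forces Theorem~\ref{mainthm1} to be an equality rather than merely a lower bound.

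Finally, to pass from $b=1$ to general $b\in\NN$, I would exploit the JSJ structure: adding $b-1$ extra belts to $W_{a,1,c,d}$ glues $b-1$ additional hyperbolic Whitehead-chain pieces into $\SS^{3}\backslash W_{a,b,c,d}(4_1)$ along essential tori, and the simplicial volume is additive across these gluings. On the quantum side, the colored Jones polynomial of $W_{a,b,c,d}(4_1)$ admits a cabling/satellite expansion along these essential tori in which each additional belt contributes an extra summation factor whose saddle-point analysis is identical to the $b=1$ situation. Combining this with Theorem~\ref{relationship}, the same two-sided argument (lower bound from the single $\vec M = \vec N$ term, upper bound from the uniform estimate above applied piecewise) reproduces $v_{3}\|\SS^{3}\backslash W_{a,b,c,d}(4_1)\|$ as the exponential growth rate and establishes Conjecture~\ref{tvGM} for all admissible $(a,b,c,d)$.
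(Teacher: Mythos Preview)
Your lower bound for $b=1$ is exactly what the paper does. Your upper bound for $b=1$, however, is more elaborate than necessary: you frame it as a Schl\"afli-type monotonicity statement about incomplete hyperbolic structures on the JSJ pieces and call this ``the main technical obstacle''. The paper avoids this entirely. Lemmas~\ref{uppbd1} and~\ref{uppbdW} bound the figure-eight factor and each clasp/mirror-clasp factor of $|J_{\vec M}|$ term-by-term by explicit Lobachevsky-function expressions whose maxima over \emph{all} limiting ratios are exactly $\Vol(\SS^3\backslash 4_1)/2\pi$ and $\Vol(\SS^3\backslash WL)/2\pi$. Multiplying these bounds and summing over the (polynomially many) summands gives the uniform upper bound directly, with no need to interpret the potential geometrically or to prove any new rigidity statement.

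Your passage from $b=1$ to general $b$ contains a genuine error. Adding $b-1$ extra belts to $W_{a,1,c,d}$ does \emph{not} glue in additional hyperbolic Whitehead-chain pieces: the extra belts are parallel unknotted circles, and drilling them inserts Seifert-fibered $T^{2}\times I$ pieces of zero simplicial volume into the JSJ decomposition. Hence
\[
\|\SS^{3}\backslash W_{a,b,c,d}(4_1)\| \;=\; \|\SS^{3}\backslash W_{a,1,c,d}(4_1)\|\qquad\text{for every }b\geq 1.
\]
Correspondingly, in the colored Jones polynomial each extra belt contributes only a quantum-integer factor of the type $[M_j(2n+1)]/[2n+1]$, which at $t=e^{2\pi i/(N+\frac{1}{2})}$ is bounded polynomially in $N$ and has no saddle-point contribution whatsoever; the analysis is \emph{not} ``identical to the $b=1$ situation''. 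The paper's route to general $b$ is accordingly much shorter: for the lower bound one keeps a single term in the Theorem~\ref{relationship} sum in which the extra $b-1$ belts are coloured by $1$, so that the term equals $|J_{\vec N}(W_{a,1,c,d}(4_1))|^{2}$ and Theorem~\ref{mainthm1} applies verbatim; for the upper bound the polynomial belt factors are absorbed into the prefactor and Lemmas~\ref{uppbd1} and~\ref{uppbdW} finish the estimate exactly as in Section~\ref{strategy}.
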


Nevertheless, for $p\geq 0$ and $\alpha, \beta \geq 0$ with $\alpha+\beta\geq 1$, we have the following two results.
\begin{theorem}\label{mainthm5}
Conjecture~\ref{NVCMM} is true for $W^{p+1}_{0,1,1,0}(4_1)$ and $W^\alpha_\beta$ for $p\geq 0$ and $\alpha, \beta \geq 0$ with $\alpha+\beta\geq 1$.
\end{theorem}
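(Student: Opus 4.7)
The strategy is to generalize the argument of Theorem~\ref{mainthm1} along the iterated JSJ structure, using the cabling formula for Whitehead doubling. First, I would apply the Whitehead cabling formula $p+1$ times, respectively $\alpha+\beta-1$ times, to write the $\vec{N}$-th colored Jones polynomial at $t = e^{\frac{2\pi i}{N+\frac{1}{2}}}$ as an iterated sum
$$J_{\vec{N}}\lt(W^{p+1}_{0,1,1,0}(4_1); e^{\f{2\pi i}{N+\frac{1}{2}}}\rt) = \sum_{m_1, \dots, m_{p+1}} \lt(\prod_{j=1}^{p+1} \gamma_{m_{j-1}, m_j}\rt) J_{m_{p+1}}\lt(4_1; e^{\f{2\pi i}{N+\frac{1}{2}}}\rt),$$
where $m_0 = N$ and the kernel $\gamma_{m, m'}$ is essentially $J_{m, m'}(W_{0,1,1,0};t)$ divided by the unknot normalization. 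An analogous iterated-sum formula holds for $W^\alpha_\beta$, obtained by cabling along both components of the Hopf link (with no $4_1$ factor in this case).

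\textbf{Potential function and critical points.} Writing $s_j = m_j/(N+\frac{1}{2})$, the asymptotic expansions of $J_N(4_1;t)$ and $J_{N,M}(W_{0,1,1,0};t)$ at the $(N+\frac{1}{2})$-th root of unity established in \cite{WA17,W19} imply that each summand is of the form $\exp\lt(\f{N+\frac{1}{2}}{2\pi}\Phi_\bullet + O(\log N)\rt)$, so the total log-potential decomposes as
$$\Phi(s_1,\dots,s_{p+1}) = \Phi_{4_1}(s_{p+1}) + \sum_{j=1}^{p+1}\Phi_{W_{0,1,1,0}}(s_{j-1},s_j), \qquad s_0 = 1,$$
with a purely Whitehead-link analogue for $W^\alpha_\beta$. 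By \cite{WA17,W19}, the imaginary part of each $\Phi_\bullet$ at a critical point equals the hyperbolic volume of that piece under the incomplete structure determined by the ratios $s$, and the equations $\partial \Phi/\partial s_j = 0$ are exactly the holonomy-matching conditions across the JSJ tori. Setting every $s_j = 1$ makes each $\Phi_{W_{0,1,1,0}}(1,1)$ and $\Phi_{4_1}(1)$ simultaneously critical and recovers the complete hyperbolic structure on every piece, so the gluing equations are automatically satisfied and
$$\im\Phi(\vec{1}) = \Vol(\SS^3\backslash 4_1) + (p+1)\Vol(\SS^3\backslash W_{0,1,1,0}) = v_3\,||\SS^3\backslash W^{p+1}_{0,1,1,0}(4_1)||$$
by the additivity of simplicial volume along incompressible tori; the $W^\alpha_\beta$ case is identical with $(\alpha+\beta)\Vol(\SS^3\backslash W_{0,1,1,0})$ on the right.

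\textbf{Saddle point estimation.} Finally, I would apply the multi-dimensional Poisson-summation/stationary-phase framework of \cite{WA17,W19} to the iterated sum. The lower bound on the growth rate follows from restricting to a neighborhood of $\vec{s} = \vec{1}$, where a non-degenerate Gaussian contribution produces only $O(\log N)$ corrections to the exponential rate. The upper bound requires showing that $\vec{s} = \vec{1}$ is the dominant critical point of $\Phi$, and \emph{this dominance is the main obstacle}: one has to rule out competing critical points of the complexified multi-variable potential whose imaginary parts exceed the complete-structure value. Here the product structure of $\Phi$ is essential---for each fixed value of the adjacent variables, $\im\Phi_{W_{0,1,1,0}}(s_{j-1},s_j)$ and $\im\Phi_{4_1}(s_{p+1})$ are each locally maximized at their respective complete structures by the volume rigidity/concavity analysis in \cite{WA17,W19}. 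Propagating this piecewise maximality outward from the boundary condition $s_0 = 1$ through the chain of gluing equations pins $\vec{s} = \vec{1}$ as the unique dominant saddle, and the Hessian non-degeneracy there is inherited from the single-piece analyses already carried out in \cite{WA17,W19}.
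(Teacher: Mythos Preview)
Your plan follows the same route as the paper: express the colored Jones polynomial as an iterated sum via the Whitehead-cabling formula of \cite{HZ07}, identify a potential that decomposes along the JSJ pieces, obtain the upper bound by bounding each factor separately (which forces all intermediate ratios to $1$ via the equality clauses of Lemmas~\ref{uppbd1} and~\ref{uppbdW}---this is exactly your ``propagating piecewise maximality''), and then extract the lower bound by saddle-point approximation near the point corresponding to the complete structure on every piece.

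Two places deserve more care than your sketch suggests. First, rather than treating each piece as a pre-expanded black box and then summing over the intermediate colors $s_j$, the paper keeps \emph{all} internal summation variables (the $n_\gamma, l_\gamma$ from each clasp factor $C(n,t;\cdot)$ and the $k$ from the figure-eight factor) and performs a single saddle-point analysis on a potential in $2p+3$ (respectively $2\alpha+2\beta$) real variables; your layered version would additionally require uniformity of the inner approximations in the parameters $s_j$. Second, your claim that Hessian non-degeneracy is ``inherited from the single-piece analyses'' is not immediate: because each shared variable $z_{2\gamma-1}$ appears in two adjacent $\Psi$-blocks, the full Hessian has off-diagonal coupling and is not block-diagonal. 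The paper handles this in Lemmas~\ref{nonsing1} and~\ref{nonsing2} by showing that the \emph{real part} of the full Hessian is a sum of overlapping negative-definite quadratic forms, hence itself negative definite, which forces the complex Hessian to be nonsingular. For $W^\alpha_\beta$ there is one further wrinkle you do not mention: the Hopf-linking factor produces two contributions $I_+$ and $I_-$ with identical exponential growth, and one must check their leading coefficients do not cancel; the paper disposes of this by noting $1/\sqrt{a}+1/\sqrt{b}\neq 0$.
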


\begin{corollary}\label{cablingtv2}
Conjecture~\ref{tvGM} is true for $W^{p+1}_{0,1,1,0}(4_1)$ and $W^\alpha_\beta$ for $p\geq 0$ and $\alpha, \beta \geq 0$ with $\alpha+\beta\geq 1$.
\end{corollary}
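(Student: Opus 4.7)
The plan is to sandwich $TV_r(\SS^3\backslash L, e^{2\pi i/r})$ between a lower and an upper bound that share the exponential growth rate $v_3\|\SS^3\backslash L\|$, using Theorem~\ref{relationship} to convert the question into one about colored Jones polynomials and Theorem~\ref{mainthm5} as the main input. Writing $r=2N+1$, the identity $\tfrac{4\pi}{r}=\tfrac{2\pi}{N+1/2}$ aligns the rescaling in Theorem~\ref{mainthm5} with the one appearing in Conjecture~\ref{tvGM}, which is precisely what makes the corollary an extraction from the theorem.

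For the lower bound, I would keep only the diagonal summand $\vec M=\vec N=((r-1)/2,\dots,(r-1)/2)$ in the sum of Theorem~\ref{relationship}, which yields
\[
\frac{2\pi}{r}\log TV_r(\SS^3\backslash L, e^{2\pi i/r}) \;\geq\; \frac{2\pi}{N+1/2}\log\lt|J_{\vec N}(L, e^{2\pi i/(N+1/2)})\rt| + O\lt(\tfrac{\log r}{r}\rt),
\]
so letting $N\to\infty$ and invoking Theorem~\ref{mainthm5} gives $\liminf_{N\to\infty}\tfrac{2\pi}{r}\log TV_r \geq v_3\|\SS^3\backslash L\|$. For the upper bound I would majorize the sum by its number of terms, which is at most $((r-1)/2)^n$ and hence contributes only polynomially in $r$, times its largest summand. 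This reduces the corollary to the uniform estimate
\[
\limsup_{N\to\infty}\;\frac{2\pi}{N+1/2}\log\max_{1\leq \vec M\leq (r-1)/2}\lt|J_{\vec M}(L, e^{2\pi i/(N+1/2)})\rt| \;\leq\; v_3\|\SS^3\backslash L\|.
\]

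This uniform bound, valid over \emph{all} multi-colors rather than only $\vec N$, is the main obstacle and the only part that goes beyond a direct appeal to Theorem~\ref{mainthm5}. I would attack it by revisiting the state-integral / Poisson summation representation of $J_{\vec M}$ underlying Theorem~\ref{mainthm5}, but now treating the rescaled colors $\vec s = \vec M/(N+1/2)\in(0,1]^n$ as a free continuous parameter rather than specialising to $\vec s=\vec 1$. Because the JSJ decomposition of $\SS^3\backslash L$ into hyperbolic pieces is mirrored in a factorisation of the colored Jones polynomial (each iterated-Whitehead-double step contributes its own $W_{0,1,1,0}$ block), the associated potential function $\Phi_{\vec s}$ splits as a sum of potentials, one per hyperbolic piece, each recording the volume of an incomplete hyperbolic structure on that piece whose meridian holonomy is prescribed by the corresponding entries of $\vec s$. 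Mostow-Prasad rigidity together with the Schl\"afli-type maximality of the complete hyperbolic volume then forces
\[
\sup_{\vec s\in (0,1]^n}\;\sup_{\text{crit.\ pts}}\Re\Phi_{\vec s} \;=\; \sum_i \Vol(\text{piece}_i) \;=\; v_3\|\SS^3\backslash L\|.
\]
Feeding this into the standard saddle-point estimate, with the usual polynomial correction for the number of critical points, yields the uniform upper bound and closes the sandwich.
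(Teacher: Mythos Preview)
Your sandwich strategy and the lower bound match the paper exactly. The divergence is in the upper bound: the paper does not go through the saddle-point/potential-function machinery at all for this step, but instead bounds each summand in the colored Jones formula directly using Lemmas~\ref{uppbd1} and~\ref{uppbdW}. Those lemmas assert that for \emph{every} color $M$ the factors $g_M(k)$ and $c_M(n,l;t)$ satisfy $\tfrac{1}{N+1/2}\log|\cdot|\leq \Vol(\text{piece})/2\pi$, uniformly in the summation indices. Since the colored Jones polynomials of $W^{p+1}_{0,1,1,0}(4_1)$ and $W^\alpha_\beta$ are polynomial-in-$N$ sums of products of exactly such factors (one per hyperbolic piece), this immediately yields $\limsup_{N\to\infty}\tfrac{2\pi}{N+1/2}\log|J_{\vec M}|\leq v_3\|\SS^3\backslash L\|$ uniformly in $\vec M$, and the squeeze closes.

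Your proposed route to the upper bound has a genuine gap. You control $\sup_{\vec s}\sup_{\text{crit.\ pts}}\Re\Phi_{\vec s}$, but what bounds the sum is $\sup_{\vec s}\sup_{z\in D_{\RR}}\Re\Phi_{\vec s}(z)$ over the \emph{entire real summation domain}, not just at critical points. Schl\"afli-type maximality only compares volumes of genuine hyperbolic structures; for a generic $z$ in the domain, $\Re\Phi_{\vec s}(z)$ is just a combination of Lobachevsky values and does not correspond to any hyperbolic structure, so rigidity says nothing about it. (Indeed, for $\vec s$ away from $1$ the relevant critical point is complex, so the real maximum and the critical value are not even the same object.) Closing this gap---showing that the global real maximum of $\Re\Phi_{\vec s}$ is bounded by the complete volume---is precisely the content of Lemmas~\ref{uppbd1} and~\ref{uppbdW}, proved by direct analysis of $\Lambda$ rather than by geometry. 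Once you invoke those lemmas, the saddle-point apparatus becomes superfluous for the upper bound.
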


In particular, Theorem~\ref{mainthm1} and Corollary~\ref{cablingtv1} give an infinite family of prime links with $2$ hyperbolic pieces satisfying Conjecture~\ref{NVCMM} and Conjecture~\ref{tvGM}. Also, for any $n\in \NN$, Theorem~\ref{mainthm5} and Corollary~\ref{cablingtv2} gives examples of prime links with $n$ hyperbolic pieces under JSJ decomposition satisfying Conjecture~\ref{NVCMM} and Conjecture~\ref{tvGM}.

\subsubsection{Generalized volume conjecture for the colored Jones polynomials of prime links whose complements consist of only hyperbolic pieces}

Next, we study the asymptotics of the $M$-th colored Jones polynomials of the iterated Whitehead double of the figure eight knot. We have the following results.

\begin{theorem}\label{mainthm3}
Let $\ds s= \lim_{N\to \infty}\frac{M}{N+\frac{1}{2}}$. For any $p\geq 0$, there exists $\delta>0$ such that for any $1-\delta < s \leq 1$, we have
\begin{align}
&\lim_{N\to \infty} \frac{2\pi}{N+\frac{1}{2}} 
\log |J_{M}( W_{0,1,1,0}^{p+1}(4_1), e^{\frac{2\pi i}{N+\frac{1}{2}}}) |  \notag\\
&=  \Vol(\SS^3 \backslash WL; u_1 = 0, u_2=2\pi i(1-s)) + p\Vol(\SS^3 \backslash WL) + \Vol(\SS^3 \backslash 4_1) \label{3},
\end{align}
where $u_1,u_2$ are the logarithm of the holonomy of the meridian of the belt and that of the clasp respectively, and $ \Vol(\SS^3 \backslash WL; u_1 = 0, u_2=2\pi i(1-s))$ is the volume of the Whitehead link complement equipped with the incomplete hyperbolic structure parametrized by $u_1,u_2$.
\end{theorem}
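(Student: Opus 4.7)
The plan is to express $J_M(W^{p+1}_{0,1,1,0}(4_1), t)$ at $t = e^{2\pi i/(N+\frac{1}{2})}$ as a $(p+1)$-fold nested sum using iterated cabling formulas, and then analyze the sum via Poisson summation and the saddle point method, with each layer contributing the volume of a single JSJ piece.

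First I would apply the cabling formula for the Whitehead doubling operation: for any knot $K$,
$$J_M(W_{0,1,1,0}(K), t) = \sum_{M'} \mathcal{H}(M, M', t)\, J_{M'}(K, t),$$
where $\mathcal{H}(M, M', t)$ is built from the $(M,M')$-colored Jones polynomial of the Whitehead link together with a framing/normalization factor. Iterating this relation $p+1$ times yields
$$J_M(W^{p+1}_{0,1,1,0}(4_1), t) = \sum_{M_1, \ldots, M_{p+1}} \prod_{j=1}^{p+1} \mathcal{H}(M_{j-1}, M_j, t) \cdot J_{M_{p+1}}(4_1, t),$$
with the convention $M_0 = M$ and each summation running over the admissible range of colors.

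Next I would rewrite the summand in terms of quantum dilogarithms and apply Poisson summation to convert each discrete sum into a contour integral of the form $\int e^{(N+\frac{1}{2})\Phi(z_1,\dots,z_{p+1})}\,dz$. The potential decomposes as
$$\Phi(z_1,\dots,z_{p+1}) = \Phi_{WL}(s, z_1) + \sum_{j=1}^{p} \Phi_{WL}(z_j, z_{j+1}) + \Phi_{4_1}(z_{p+1}),$$
with $s$ treated as a fixed external parameter. The critical point equations $\partial_{z_j}\Phi = 0$ correspond precisely to the hyperbolic gluing equations of the JSJ decomposition, asserting that the logarithmic meridian holonomies match along each of the $p+1$ gluing tori. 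By the author's previous work \cite{WA17, W19}, each local potential $\Phi_{WL}(\cdot,\cdot)$ and $\Phi_{4_1}(\cdot)$ has an isolated non-degenerate critical point for boundary parameters near $1$, with critical value equal to the volume of the corresponding piece equipped with the incomplete hyperbolic structure determined by those parameters. At the joint critical point of the global $\Phi$, the inner gluing equations force $z_1 = \cdots = z_{p+1} = 1$ asymptotically; consequently, the inner $p$ Whitehead links and the figure eight knot inherit the complete hyperbolic structure, while only the outermost Whitehead link retains the incomplete structure parametrized by $s$. Summing the individual critical values recovers the right-hand side of the claimed formula.

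The main obstacle is the multi-variable saddle point analysis: one must verify that the full Hessian $\Hess\Phi$ is non-degenerate and construct a simultaneous steepest descent contour avoiding the branch cuts of every quantum dilogarithm factor, while controlling the error from Poisson summation uniformly in $N$. The constant $\delta > 0$ is chosen small enough that $s$ stays in a neighborhood of $1$ where the global critical point varies smoothly with $s$ and deforms continuously from the complete-structure case, ruling out degenerations of the saddle. The single-layer estimates of \cite{WA17, W19} serve as the building blocks, and the present theorem is obtained by stitching them together inductively across the $p+2$ JSJ pieces of $\SS^3 \backslash W^{p+1}_{0,1,1,0}(4_1)$.
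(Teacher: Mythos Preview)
Your approach is essentially the paper's: iterated cabling formula from \cite{HZ07}, conversion to a potential via quantum dilogarithms, localization by the upper-bound Lemmas~\ref{uppbd1} and~\ref{uppbdW}, then Poisson summation and saddle point. Two points where the paper's execution is sharper than your sketch and which you should not underestimate:

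\emph{(i) Variable count.} Each Whitehead kernel $\mathcal{H}(M_{j-1},M_j,t)=C(n_j,t;\,\cdot\,)$ is itself a sum over an internal index $l_j$, so the potential lives on $2p+3$ variables, not $p+1$. The non-degeneracy of the full Hessian is handled in Lemma~\ref{nonsing1} not by citing the single-piece results of \cite{WA17,W19} but by checking that the \emph{real part} of each block (for $\Phi(WL)$, $\Psi(WL)$, and $\Psi(4_1)$) is negative definite, so that the block-overlapping sum $\mathbf{z}A\mathbf{z}^T$ is negative definite and hence $A$ is nonsingular.

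\emph{(ii) The decoupling is exact, not asymptotic.} Your claim that ``the inner gluing equations force $z_1=\cdots=z_{p+1}=1$ asymptotically'' is the crux, but in the paper it is an exact algebraic fact: for the outermost potential $\Phi^{(s)}(WL;z_1,z_2)$ the critical point satisfies $z_1=\tfrac12$ for \emph{every} $s$ near $1$, verified via the explicit quadratic \eqref{quadr} and the identities \eqref{qlemma}--\eqref{qlemma2}. This is what prevents the deformation in $s$ from propagating to the inner pieces; the critical point is $(\tfrac12,\,z_2(s),\,\tfrac12,\tfrac14,\dots,\tfrac12,\tfrac14,\tfrac56)$ with only the second coordinate moving. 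Without this explicit check your ``continuity'' argument for the existence of $\delta>0$ has no anchor.
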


Similarly, we study the asymptotics of the $(M_1,M_2)$-th colored Jones polynomials of the iterated Whitehead double on the Hopf link and have the following results.

\begin{theorem}\label{mainthm2}
Let $\ds s_i= \lim_{N\to \infty}\frac{M_i}{N+\frac{1}{2}}$ for $i=1,2$. For any $\alpha, \beta \geq 0$ with $\alpha+\beta \geq 1$, there exists $\delta>0$ such that for any $1-\delta < s_1,s_2 \leq 1$, we have
\begin{align}
&\lim_{N\to \infty} \frac{2\pi}{N+\frac{1}{2}} 
\log |J_{M_1,M_2}( W^\alpha_\beta, e^{\frac{2\pi i}{N+\frac{1}{2}}}) |  \notag\\
&=  \Vol(\SS^3 \backslash WL; u_1 = 0, u_2=2\pi i(1-s_1)) 
+ (\alpha+\beta-2)\Vol(\SS^3 \backslash WL) \notag \\
&\qquad + \Vol(\SS^3 \backslash WL; u_1 = 0, u_2=2\pi i(1-s_2))  \label{4}
\end{align}
where $u_1,u_2$ are the logarithm of the holonomy of the meridian of the belt and that of the clasp respectively, and $ \Vol(\SS^3 \backslash WL; u_1 = 0, u_2=2\pi i(1-s))$ is the volume of the Whitehead link complement with incomplete hyperbolic structure parametrized by $u_1,u_2$.
\end{theorem}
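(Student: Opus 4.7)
The plan is to extend the strategy used for Theorem~\ref{mainthm3} to a chain of hyperbolic pieces that is bounded by Whitehead link complements at both ends rather than by a figure eight knot complement at one end. The first step is to derive a cabling formula expressing $J_{M_1,M_2}(W^\alpha_\beta; t)$ as a nested sum over intermediate colors, one for each of the $\alpha+\beta-1$ JSJ tori in $\SS^3 \backslash W^\alpha_\beta$. Each summand factors as a product of $\alpha+\beta$ colored Jones polynomials of the Whitehead link $W_{0,1,1,0}$, one per JSJ piece, with the innermost pair of factors incorporating the Hopf-link contribution, which reduces to a simple phase factor at $t = e^{2\pi i/(N+\frac{1}{2})}$.

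Next, I would substitute $t = e^{2\pi i/(N+\frac{1}{2})}$ and apply the asymptotic expansion of $J_{M,M'}(W_{0,1,1,0}; t)$ near the complete structure, established by the author in \cite{W19} while proving the generalized volume conjecture for the Whitehead link. This rewrites each factor as a leading principal contribution of the form $\ds \Lambda_k \exp\lt( \frac{N+\frac{1}{2}}{2\pi i}\, \Phi_{WL}(s_k, s_{k+1}) \rt)$, where $s_k = m_k/(N+\frac{1}{2})$ and $\Phi_{WL}$ is the Neumann--Zagier style potential of the Whitehead link. Applying Poisson summation to convert the nested sums over the intermediate colors into integrals, the whole expression becomes a multidimensional oscillatory integral of the form
\begin{align*}
\int \exp\lt( \frac{N+\frac{1}{2}}{2\pi i}\, \Phi(s_2, \dots, s_{\alpha+\beta}) \rt) \cdot (\text{slowly varying factor}) \; ds_2 \cdots ds_{\alpha+\beta},
\end{align*}
with total potential $\ds \Phi = \sum_{k=1}^{\alpha+\beta} \Phi_{WL}(s_k, s_{k+1})$ and boundary values $s_1, s_{\alpha+\beta+1}$ fixed by $M_1, M_2$.

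I would then carry out the multidimensional saddle point analysis. The critical point equations decouple into local balance equations between adjacent Whitehead link pieces: the gluing equations force each interior piece to carry the complete hyperbolic structure of $W_{0,1,1,0}$, while the two end pieces inherit the boundary conditions $u_i = 2\pi i(1-s_i)$. The volume identification of the Whitehead link potential from \cite{W19} then yields the real part of $\Phi$ at the saddle as
\begin{align*}
\Vol(\SS^3 \backslash WL;\, 0,\, 2\pi i(1-s_1)) + (\alpha+\beta-2)\Vol(\SS^3 \backslash WL) + \Vol(\SS^3 \backslash WL;\, 0,\, 2\pi i(1-s_2)),
\end{align*}
which after dividing by $2\pi$ matches the right-hand side of (\ref{4}). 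The constant $\delta > 0$ emerges from the implicit function theorem applied to the critical point equations in a neighborhood of the complete structure.

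The main obstacle is the uniform control of the multidimensional stationary phase, specifically establishing non-degeneracy of the Hessian of $\Phi$ at the saddle and bounding the contribution of non-principal Fourier modes arising from the Poisson summation. Since $\Phi$ decomposes as a sum with only nearest-neighbor coupling between variables, its Hessian has a block tridiagonal structure, and non-degeneracy should follow inductively from the non-degeneracy of the individual Whitehead link Hessian verified in \cite{W19}. A secondary subtlety is ensuring that in the geometric region defined by $s_i$ close to $1$, the dominant saddle remains the geometric one and does not collide with spurious critical points, which is precisely why the statement is restricted to $s_i$ in a small neighborhood of $1$.
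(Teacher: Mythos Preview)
Your proposal follows essentially the same architecture as the paper's proof in Section~\ref{Walphabeta}: a cabling formula from \cite{HZ07} gives a nested sum over intermediate colors, maximum-point estimation via Lemma~\ref{uppbdW} localizes to a neighborhood of the critical configuration, and a multidimensional saddle point analysis identifies the leading growth with the sum of volumes. Two points deserve correction or sharpening.

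First, your claim that the Hopf-link contribution ``reduces to a simple phase factor'' is not quite right. The innermost factor is
\[
\frac{t^{(2n_{\beta}'+1)(2n_{\zeta}+1)/2}-t^{-(2n_{\beta}'+1)(2n_\zeta+1)/2}}{t^{1/2}-t^{-1/2}},
\]
which the paper splits into two exponential terms, producing two separate integrals $I_+$ and $I_-$ with distinct potential functions $\Phi^{+}$ and $\Phi^{-}$. These share the same critical point and critical value, but one must still verify that the two saddle contributions do not cancel; the paper does this by observing that $1/\sqrt{a}+1/\sqrt{b}\neq 0$ for the two Hessian determinants. Your outline does not account for this step.

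Second, your proposed route to Hessian non-degeneracy---an inductive argument exploiting the block tridiagonal structure---is plausible but not what the paper does, and it is not obvious that nearest-neighbor coupling alone yields an inductive step without further input. The paper instead shows (Lemma~\ref{nonsing2}) that the \emph{real part} of the Hessian at the critical point is negative definite, by decomposing the associated quadratic form as a sum of the negative definite quadratic forms coming from each hyperbolic piece, and then invokes the elementary fact from \cite{L81} that a complex matrix with negative definite real part is nonsingular. This is cleaner than an inductive determinant computation and is what you should aim for.
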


Recall that the JSJ decompositions of the manifolds $\SS^3\backslash W_{0,1,1,0}^{p+1}(4_1)$ and $\SS^3\backslash W^\alpha_\beta$ consist of only hyperbolic pieces. From Equations (\ref{3}) and (\ref{4}) in Theorem~\ref{mainthm3} and~\ref{mainthm2}, we can see that the relationship between the color of a link component and the corresponding choice of hyperbolic structure is in the same spirit as that in Conjecture~\ref{GVC}. Moreover, along the incompressible torus, the metric is always chosen to be complete. We call this assignment of hyperbolic structures to each hyperbolic piece the \emph{natural hyperbolic structure associated to the coloring} (or \emph{natural hyperbolic structure} in short). Furthermore, recall that the colored Jones polynomials of the connected sum $L_1\# L_2$ along a component colored by $i$ is given by
$$ [i]J_{\vec{M}}(L_1 \# L_2, t) = J_{\vec{M_1}}(L_1,t) J_{\vec{M_2}}(L_2,t) ,$$
where $\vec{M_1}$ and $\vec{M_2}$ are the restrictions of the color $\vec{M}$ to $L_1$ and $L_2$ respectively and 
$$[i]=\frac{t^{\frac{i}{2}}-t^{-\frac{i}{2}}}{t^{\frac{1}{2}}-t^{-\frac{1}{2}}}.$$ Therefore, we restrict our attention to prime links. The above observation leads to the following conjecture. 

\begin{conjecture}\label{GVCS} 
Let $L$ be a $n$ components prime link with complement having only hyperbolic pieces. Then there exists $\delta_L>0$ such that for any coloring $\vec{M} = (M_1,\dots, M_n)$ with the limiting ratio $1-\delta< s_i \leq 1$, $i=1,2,\dots,n$, we have
\begin{align*}
\lim_{N\to \infty}\frac{2\pi}{N+\frac{1}{2}} \log| J_{\vec{M}}(L,e^{\frac{2\pi i}{N+\frac{1}{2}}}) |
= \frac{1}{2\pi} \Vol(\SS^3 \backslash L; u_i = 2\pi i(1-s_i)),
\end{align*}
where $\Vol(\SS^3 \backslash L; u_i = 2\pi i (1-s_i))$ is the sum of the volume of the hyperbolic pieces equipped with the natural hyperbolic structures.
\end{conjecture}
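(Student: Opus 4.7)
The plan is to prove Conjecture~\ref{GVCS} by induction on the number $k$ of hyperbolic pieces in the JSJ decomposition of $\SS^3\backslash L$. The base case $k=1$ is precisely Conjecture~\ref{GVC}. For the inductive step, choose an outermost essential torus $T$ of the JSJ decomposition, cutting off one hyperbolic JSJ piece $X_1$ (the exterior of a hyperbolic link pattern in a solid torus) from a complementary manifold $X_2$ whose JSJ decomposition contains $k-1$ hyperbolic pieces.

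The first step is to derive a cabling/gluing state sum formula in the spirit of the calculations used for $W^{p+1}_{0,1,1,0}(4_1)$ and $W^{\alpha}_{\beta}$ in Theorems~\ref{mainthm3} and~\ref{mainthm2}. Writing $L$ as obtained from a simpler link $L'$ by replacing a tubular neighborhood of one component with the pattern link associated with $X_1$, the colored Jones polynomial should decompose as
\[
J_{\vec M}(L,t)=\sum_{j\geq 1}[j]\,\widetilde J_{\vec M_1,j}(X_1,t)\,\widetilde J_{j,\vec M_2}(L',t),
\]
where $j$ is the color assigned to the core curve of the gluing torus, $\widetilde J$ denotes suitably normalized invariants of the two sides, and $\vec M_1,\vec M_2$ partition $\vec M$ by cusp location. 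Setting $t=e^{2\pi i/(N+1/2)}$ and writing $s=j/(N+\tfrac12)$, I would then apply Poisson summation to convert this sum into an oscillatory integral whose leading-order exponent is a sum of two potential functions $\Phi_{X_1}(s,\vec s_1)+\Phi_{X_2}(s,\vec s_2)$.

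Next I would run the saddle point method on this integral. The critical equation in $s$ reproduces the hyperbolic gluing condition: the logarithmic holonomies of the meridian of $T$ seen from either side must be opposite. When $\vec s_1,\vec s_2$ are sufficiently close to the complete point, the implicit function theorem produces a unique critical point near $s=1$, corresponding to equipping $T$ with the complete cusp structure, which is exactly the natural hyperbolic structure. Combining the inductive hypothesis applied to $X_2$, the base case applied to $X_1$, and the Neumann-Zagier identification of the critical value of $\im\Phi$ with hyperbolic volume, the resulting critical value equals the sum of the volumes of the natural pieces. Standard saddle-point estimates on the pre-factor then yield the desired exponential growth rate.

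The hardest step will be the general cabling state sum together with its asymptotic analysis. First, not every prime link with only hyperbolic JSJ pieces arises from iterated cabling with the specific Whitehead or Whitehead-chain patterns treated here, so establishing a universal skein-theoretic state sum across an arbitrary JSJ torus, and controlling the growth of its summands, is a genuine technical obstruction. Second, for a general hyperbolic JSJ piece one must show that the potential function admits a unique non-degenerate saddle point near the complete structure, and that all other saddle points, together with the tail contributions from Poisson summation, contribute strictly smaller real part. This amounts to a non-degeneracy and dominance analysis in the style of~\cite{WA17,W19}, but carried out uniformly across all hyperbolic JSJ pieces that may occur.
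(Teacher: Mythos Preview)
The statement you are trying to prove is a \emph{conjecture} in the paper, not a theorem: the paper does not prove Conjecture~\ref{GVCS} in general but only verifies it for the explicit families $W_{0,1,1,0}^{p+1}(4_1)$ and $W^\alpha_\beta$ via the concrete computations in Sections~\ref{Wp0110W021041} and~\ref{Walphabeta}. So there is no ``paper's own proof'' to compare against; your proposal is an attempted attack on an open problem.

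Your inductive scheme has two genuine gaps, one of which you partially flag. First, the base case $k=1$ is Conjecture~\ref{GVC}, which is itself open: it is known only for the figure eight knot \cite{WA17} and some Whitehead chains \cite{W19}, not for an arbitrary hyperbolic link. Since an arbitrary hyperbolic JSJ piece of a prime link can be the complement of any hyperbolic link in a solid torus, your induction never gets off the ground without assuming the very thing you want. Second, the ``cabling/gluing state sum'' you write down,
\[
J_{\vec M}(L,t)=\sum_{j\geq 1}[j]\,\widetilde J_{\vec M_1,j}(X_1,t)\,\widetilde J_{j,\vec M_2}(L',t),
\]
does not exist in this form for a general JSJ torus. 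The formulas in \cite{HZ07} and the paper work because the pattern sits in a solid torus with a single core strand, so cabling replaces one color by a weighted sum over $j$; for a general essential torus neither side is a satellite of the other in this simple sense, and the relevant TQFT gluing involves a sum over a basis of the skein module of $T^2$ with structure constants that are not simply $[j]$. Even in the cases the paper handles, the dominance and non-degeneracy analysis is done by hand for explicit potential functions built from $\Li$; there is no known uniform argument of the type you sketch for ``all hyperbolic JSJ pieces.'' You correctly identify these as obstructions in your final paragraph, but they are not merely technical: each is, at present, an open problem of roughly the same difficulty as Conjecture~\ref{GVCS} itself.
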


By Theorem~\ref{mainthm2}, we have
\begin{corollary}
Conjecture~\ref{GVCS} is true for the links $W_{0,1,1,0}^{p+1}(4_1)$ and $W^\alpha_\beta$ for $p\geq 0$ and $\alpha,\beta \geq 0$ with $\alpha+\beta\geq 1$.
\end{corollary}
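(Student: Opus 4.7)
The plan is to deduce the Corollary by a direct bookkeeping identification: since the exponential growth rates on the left-hand side of Conjecture~\ref{GVCS} have already been computed by Theorem~\ref{mainthm3} (for $W_{0,1,1,0}^{p+1}(4_1)$) and Theorem~\ref{mainthm2} (for $W^\alpha_\beta$), the only remaining task is to verify that the volume sums appearing on the right-hand sides of those theorems coincide with the total volume of the JSJ pieces equipped with the natural hyperbolic structures prescribed by the coloring.

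For $L = W_{0,1,1,0}^{p+1}(4_1)$ (a knot with a single color $M$ and limiting ratio $s$), I first trace through the iterated Whitehead doubling to identify each cusp of the JSJ chain $\SS^3 \backslash 4_1, WL_1, \dots, WL_{p+1}$. One checks that the belt cusp of $WL_i$ is glued to $\partial N(4_1)$ when $i=1$ and to the clasp cusp of $WL_{i-1}$ when $i>1$, while the clasp cusp of $WL_{p+1}$ is the knot component $L$ itself. The natural hyperbolic structure then puts the complete structure on the $\SS^3 \backslash 4_1$ piece and on each interior $WL_i$ for $1 \le i \le p$, while on $WL_{p+1}$ it puts $u_1 = 0$ on the (glued) belt cusp and $u_2 = 2\pi i(1-s)$ on the clasp cusp. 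Summing the contributions yields $\Vol(\SS^3 \backslash WL;\, u_1 = 0,\, u_2 = 2\pi i(1-s)) + p\,\Vol(\SS^3 \backslash WL) + \Vol(\SS^3 \backslash 4_1)$, which is precisely the right-hand side of Theorem~\ref{mainthm3}. This proves Conjecture~\ref{GVCS} for these knots.

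For $L = W^\alpha_\beta$ with $\alpha+\beta\geq 2$ (a two-component link with colors $M_1, M_2$ and limiting ratios $s_1, s_2$), the JSJ decomposition is a chain of $\alpha+\beta$ Whitehead link pieces whose two ends carry the two link components as their outer (clasp) cusps. By the symmetric analogue of the previous argument, each of the two outermost pieces contributes $\Vol(\SS^3 \backslash WL;\, u_1=0,\, u_2=2\pi i(1-s_i))$ while each of the $\alpha+\beta-2$ interior pieces contributes $\Vol(\SS^3 \backslash WL)$, reproducing the right-hand side of Theorem~\ref{mainthm2}. The remaining edge case $\alpha+\beta=1$, in which $W^\alpha_\beta$ is the Whitehead link itself, requires a separate citation: here the natural structure assigns $u_i = 2\pi i(1-s_i)$ on each of the two cusps, and the required statement is Conjecture~\ref{GVC} for the Whitehead link, already established in \cite{W19} as a special case of the Whitehead chains.

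The main obstacle is not analytic but combinatorial: one must correctly identify, for each Whitehead link piece along the JSJ chain, which of its two cusps plays the role of the ``belt'' and which of the ``clasp'' in the formulas of Theorems~\ref{mainthm3} and~\ref{mainthm2}, so that the parametrized volumes $\Vol(\SS^3 \backslash WL;\, u_1, u_2)$ match up with the correct cusps of the natural hyperbolic structure. Once this labeling is tracked consistently through the iterated Whitehead doubling construction, the identification of the two sides, and hence the corollary, follows immediately from the two theorems.
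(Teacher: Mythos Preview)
Your proposal is correct and follows the same approach as the paper, which simply records the corollary as an immediate consequence of Theorems~\ref{mainthm3} and~\ref{mainthm2} after having defined the ``natural hyperbolic structure'' precisely so that the right-hand sides of those theorems coincide with $\Vol(\SS^3 \backslash L;\, u_i = 2\pi i(1-s_i))$. Your explicit treatment of the edge case $\alpha+\beta=1$ via the Whitehead link result from \cite{W19} is a worthwhile addition, since the formula in Theorem~\ref{mainthm2} (with its $(\alpha+\beta-2)$ coefficient) and the proof in Section~\ref{Walphabeta} do not transparently specialize to a single Whitehead link piece.
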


\subsubsection{Generalized volume conjecture for the colored Jones polynomials under Hopf union}

So far we discussed the generalized volume conjecture for links whose complement consists of only hyperbolic pieces. It is natural to consider a similar version of generalized volume conjecture for links whose complements consist of hyperbolic pieces together with Seifert fiber pieces.

To obtain examples of this type of links, we consider the Hopf union of knots and introduce the following lemma, which can be proved by using skein theory (see Appendix A).

\begin{lemma}\label{CJHUlemma}
Let $M_1,M_2 \in \NN$ and let $K_1,K_2$ be two knots. For $t=e^{\frac{2\pi i}{N+\frac{1}{2}}}$, we have
\begin{align}
\lt| J_{M_1,M_2}(K_1 \# W^0_0 \# K_2, t) \rt| = \lt| \frac{[M_1M_2]}{[M_1][M_2]} J_{M_1}(K_1, t) J_{M_2}(K_2,t)  \rt|  ,\label{cjhueqn}
\end{align}
where $[n] = \frac{t^\frac{n}{2}-t^\frac{n}{2}}{t^\frac{1}{2}-t^\frac{1}{2}}$.
\end{lemma}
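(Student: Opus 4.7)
The strategy is to reduce the proof to two standard inputs: (i) the connected-sum formula for the unnormalized colored Jones polynomial, stated in the text immediately before the lemma, and (ii) a direct skein-theoretic evaluation of the colored Jones polynomial of the Hopf link $W^0_0$.

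First I would observe that the link $K_1 \# W^0_0 \# K_2$ arises from the Hopf link (whose two components we color $M_1$ and $M_2$) by two successive connected sums: one with $K_1$ along the $M_1$-colored component and one with $K_2$ along the $M_2$-colored component. Applying the identity $[i]\,J_{\vec{M}}(L_1 \# L_2, t) = J_{\vec{M_1}}(L_1,t)\,J_{\vec{M_2}}(L_2,t)$ twice yields
$$[M_1]\,[M_2]\, J_{M_1,M_2}\lt(K_1 \# W^0_0 \# K_2, t\rt) = J_{M_1,M_2}\lt(W^0_0, t\rt)\, J_{M_1}(K_1,t)\, J_{M_2}(K_2,t).$$

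Next I would evaluate $J_{M_1,M_2}(W^0_0, t)$ by skein theory. Decorating each component of $W^0_0$ with the Jones-Wenzl idempotent $f_{M_i-1}$ and applying the standard encircling identity---an $f_{m-1}$-decorated meridian around an $f_{n-1}$-colored strand acts by the scalar $[mn]/[n]$---reduces the bracket to that of a single $M_1$-colored unknot, whose unnormalized colored Jones polynomial is $[M_1]$. Up to a sign coming from framing and writhe conventions, this produces $J_{M_1,M_2}(W^0_0, t) = \pm[M_1 M_2]$. Substituting into the previous display and taking absolute values gives Equation~(\ref{cjhueqn}).

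The only delicate step is the Hopf-link calculation: one must translate carefully between the Kauffman-bracket skein picture with Jones-Wenzl projectors and the paper's normalization of $J_{\vec{M}}$, tracking sign contributions from writhe and framing. Because the lemma is stated in absolute value these signs are inessential, but this conversion is the only point where the proof requires more than a direct appeal to the two inputs above.
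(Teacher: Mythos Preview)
Your proposal is correct and follows essentially the same approach as the paper's proof in Appendix~A: both factor out the knots $K_1,K_2$ via the Jones--Wenzl idempotent absorption property (you cite this as the connected-sum formula stated earlier in the paper; the appendix re-derives it directly), and both then evaluate $|J_{M_1,M_2}(W^0_0,t)|=|[M_1M_2]|$ using the standard encircling identity (Lemma~14.2 in \cite{L97}).
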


Suppose $\lt| J_{M_1,M_2}(K_1 \# W^0_0 \# K_2, e^{\frac{2\pi i}{N+\frac{1}{2}}}) \rt| \neq 0$. Then we have
\begin{align*}
& \lim_{N \to \infty} \frac{2\pi }{N+\frac{1}{2}} \log| J_{M_1,M_2}(K_1 \# W^0_0 \# K_2, e^{\frac{2\pi i}{N+\frac{1}{2}}}) | \\
&= \lim_{N \to \infty} \frac{2\pi }{N+\frac{1}{2}} \log| J_{M_1}(K_1, e^{\frac{2\pi i}{N+\frac{1}{2}}}) |
+ \lim_{N \to \infty} \frac{2\pi }{N+\frac{1}{2}} \log| J_{M_2}(K_2, e^{\frac{2\pi i}{N+\frac{1}{2}}}) |
\end{align*}
In particular, if the generalized volume conjectures for $K_1$ and $K_2$ are true for the sequences $M_1$ and $M_2$ respectively, i.e.
\begin{align*}
\lim_{N \to \infty} \frac{2\pi }{N+\frac{1}{2}} \log| J_{M_1}(K_1, e^{\frac{2\pi i}{N+\frac{1}{2}}}) | 
&= \Vol(\SS^3 \backslash K_1; u = 2\pi i (1-s)) \text{ and } \\
 \lim_{N \to \infty} \frac{2\pi }{N+\frac{1}{2}} \log| J_{M_2}(K_2, e^{\frac{2\pi i}{N+\frac{1}{2}}}) |
&= \Vol(\SS^3 \backslash K_2; u = 2\pi i (1-s)) 
\end{align*}
then we have
$$\lim_{N \to \infty} \frac{2\pi }{N+\frac{1}{2}} \log| J_{M_1,M_2}(K_1 \# W^0_0 \# K_2, e^{\frac{2\pi i}{N+\frac{1}{2}}}) | 
= \Vol(\SS^3 \backslash K_1, u = 2\pi i (1-s)) 
+ \Vol(\SS^3 \backslash K_2, u = 2\pi i (1-s)) 
$$

In particular, when $t=e^{\frac{2\pi i}{N+\frac{1}{2}}}$, since $[N^2]\neq 0$, we have
\begin{theorem}\label{CJHU}
Conjecture~\ref{NVCMM} for $K_1$ and $K_2$ imply Conjecture~\ref{NVCMM} for $K_1 \# W^0_0 \# K_2$.
\end{theorem}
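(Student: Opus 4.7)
The plan is to specialize Lemma~\ref{CJHUlemma} to $M_1=M_2=N$, verify that the prefactor $[N^2]/[N]^2$ contributes only a sub-exponential term at $t=e^{2\pi i/(N+\f12)}$, and then assemble the result using the additivity of simplicial volume under the Hopf union, namely $||\SS^3\backslash (K_1\#W^0_0\#K_2)||=||\SS^3\backslash K_1||+||\SS^3\backslash K_2||$, recorded in the construction section.

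Concretely, Lemma~\ref{CJHUlemma} at $M_1=M_2=N$ gives
\[
\f{2\pi}{N+\f12}\log\lt|J_{N,N}(K_1\#W^0_0\#K_2,t)\rt| = \f{2\pi}{N+\f12}\log\lt|\f{[N^2]}{[N]^2}\rt| + \f{2\pi}{N+\f12}\log|J_N(K_1,t)| + \f{2\pi}{N+\f12}\log|J_N(K_2,t)|.
\]
The only technical point is to show that the first term on the right vanishes in the limit. Using $[n]=\sin(n\pi/(N+\f12))/\sin(\pi/(N+\f12))$, a short trigonometric reduction gives $|[N]|\to \f12$, while $\lt|\sin(N^2\pi/(N+\f12))\rt|=\lt|\sin(N\pi/(2N+1))\rt|\to 1$, so $|[N^2]|\sim (N+\f12)/\pi$; in particular $[N^2]\neq 0$ since the number $N-N/(2N+1)$ is never an integer. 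Hence $|[N^2]/[N]^2|$ grows only linearly in $N$, and the first term is $O(\log(N)/N)\to 0$.

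Applying Conjecture~\ref{NVCMM} for $K_1$ and $K_2$ to the remaining two terms and invoking the volume additivity formula then yields
\[
\lim_{N\to\infty}\f{2\pi}{N+\f12}\log\lt|J_{N,N}(K_1\#W^0_0\#K_2,t)\rt| = v_3||\SS^3\backslash K_1|| + v_3||\SS^3\backslash K_2|| = v_3||\SS^3\backslash(K_1\#W^0_0\#K_2)||,
\]
which is Conjecture~\ref{NVCMM} for $K_1\#W^0_0\#K_2$. The delicate ingredient is Lemma~\ref{CJHUlemma} itself (proved via skein theory in Appendix A); granted the lemma and the volume additivity recorded in the construction section, the remaining analytic step is routine and there is no serious obstacle.
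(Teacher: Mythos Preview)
Your proposal is correct and follows essentially the same approach as the paper: specialize Lemma~\ref{CJHUlemma} to $M_1=M_2=N$, observe that $[N^2]\neq 0$ so the formula is nonvanishing, note that the prefactor $[N^2]/[N]^2$ is only polynomially large and hence contributes nothing to the exponential growth rate, and conclude via the simplicial volume additivity $||\SS^3\backslash(K_1\#W^0_0\#K_2)||=||\SS^3\backslash K_1||+||\SS^3\backslash K_2||$. Your write-up is in fact slightly more explicit than the paper's, which simply asserts ``since $[N^2]\neq 0$'' without spelling out the asymptotics of the prefactor.
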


In particular, we have
\begin{corollary}
Conjectures~\ref{NVCMM} amd \ref{tvGM} are true for $4_1 \# W^0_0 \# 4_1$.
\end{corollary}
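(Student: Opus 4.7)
The strategy is to separate the two conjectures and reduce each one to information already available for the figure eight knot. Conjecture~\ref{NVCMM} for $4_1$ is precisely the $s=1$ endpoint of Conjecture~\ref{GVC} for the figure eight knot, which was proved by Au and the author in \cite{WA17}. Applying Theorem~\ref{CJHU} with $K_1 = K_2 = 4_1$ then immediately yields Conjecture~\ref{NVCMM} for $L := 4_1 \# W^0_0 \# 4_1$, consistent with the simplicial-volume identity $v_3||\SS^3 \backslash L|| = 2\Vol(\SS^3\backslash 4_1)$ since the Hopf link complement is Seifert fibered and contributes nothing.

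For the Turaev--Viro statement, substitute $n=2$ into Theorem~\ref{relationship}:
\begin{align*}
TV_r\left(\SS^3\backslash L, e^{2\pi i/r}\right) = 2\left(\frac{2\sin(2\pi/r)}{\sqrt{r}}\right)^2 \sum_{M_1,M_2=1}^{N} \left|J_{M_1,M_2}\left(L, e^{2\pi i/(N+\frac{1}{2})}\right)\right|^2 .
\end{align*}
The lower bound on $\liminf \frac{2\pi}{r}\log TV_r$ comes from retaining only the single diagonal term $M_1 = M_2 = N$ and substituting the colored Jones asymptotic just established; the polynomial prefactor is negligible and the factor of $2$ from squaring balances against $r \sim 2(N+\frac{1}{2})$ to produce the rate $2\Vol(\SS^3\backslash 4_1)$. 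For the upper bound, apply Lemma~\ref{CJHUlemma} to factor each summand as $|[M_1M_2]/([M_1][M_2])|^2 \, |J_{M_1}(4_1)|^2 \, |J_{M_2}(4_1)|^2$. At $t = e^{2\pi i/(N+\frac{1}{2})}$ one has $|[M]| = \sin(2\pi M/(2N+1))/\sin(2\pi/(2N+1))$, which is uniformly bounded below by a positive constant on $M\in\{1,\dots,N\}$ while $|[M_1M_2]| = O(N)$, so the quantum-integer ratio contributes at most a polynomial factor. Combined with a uniform upper bound of the form $|J_M(4_1, e^{2\pi i/(N+\frac{1}{2})})| \leq C N^{c} \exp\left( \frac{N+\frac{1}{2}}{2\pi} \Vol(\SS^3\backslash 4_1)\right)$ derived from the analysis of \cite{WA17}, together with the $O(N^2)$ number of summands and the $O(N^{-3})$ prefactor, one obtains $\limsup \frac{2\pi}{r}\log TV_r \leq 2\Vol(\SS^3\backslash 4_1)$, matching the lower bound.

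The principal obstacle is establishing the uniform upper bound on $|J_M(4_1, e^{2\pi i/(N+\frac{1}{2})})|$ across all $M \in \{1,\dots,N\}$. Within the near-parabolic window $s = M/(N+\frac{1}{2}) \in (1-\delta, 1]$ this is a byproduct of the sharp asymptotic formula of \cite{WA17}, since the incomplete volume function appearing there is maximized at the complete structure $s=1$. For $M$ such that $s$ lies outside this window, a separate argument is required, either by extending the saddle-point analysis to the full range of colors or by a direct term-by-term estimate of the state-sum formula for $J_M(4_1)$. The underlying geometric expectation is that the incomplete hyperbolic volumes of the Dehn-filled structures on $\SS^3\backslash 4_1$ are uniformly bounded above by $\Vol(\SS^3\backslash 4_1)$, which makes the required analytic bound plausible and should reduce the argument to a routine refinement of the estimates already developed in \cite{WA17}.
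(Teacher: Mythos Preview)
Your approach is correct and matches the paper's: Conjecture~\ref{NVCMM} follows immediately from Theorem~\ref{CJHU} applied with $K_1=K_2=4_1$, and Conjecture~\ref{tvGM} follows from the squeeze argument outlined in Section~\ref{strategy}, using Theorem~\ref{relationship} together with Lemma~\ref{CJHUlemma} to factor each summand.

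The one unnecessary weakness in your write-up is the final paragraph. You present the uniform upper bound
\[
|J_M(4_1, e^{2\pi i/(N+\frac{1}{2})})| \leq C N^{c} \exp\Big( \tfrac{N+\frac{1}{2}}{2\pi}\,\Vol(\SS^3\backslash 4_1)\Big),\qquad 1\le M\le N,
\]
as a ``principal obstacle'' possibly requiring a separate argument outside the window $s\in(1-\delta,1]$. In fact this bound is exactly what Lemma~\ref{uppbd1} (Lemma~1 of \cite{WA17}, quoted in the present paper) provides: that lemma is a direct term-by-term estimate of the state sum and bounds $\frac{1}{N+\frac{1}{2}}\log g_M(k_M)$ by $\Vol(\SS^3\backslash 4_1)/(2\pi)$ for \emph{every} limiting ratio $s\in[0,1]$, with equality only at $(s,k_s)=(1,\tfrac{5}{6})$ or $(\tfrac{1}{2},\tfrac{1}{3})$. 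Since $|J_M(4_1,t)|\le M\cdot\max_k g_M(k)/|1-t^M|$, the required uniform exponential bound follows immediately. You should replace the speculative discussion with a direct citation of Lemma~\ref{uppbd1}; then the upper-bound half of the Turaev--Viro argument is complete with no gap.
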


We remark that one can consider the Hopf union of links. Using the same argument, we have
\begin{corollary}\label{GVCHU}
Conjectures~\ref{NVCMM} amd \ref{tvGM} are true for $\underbrace{4_1 \# W^0_0 \# 4_1 \# \dots \# W^0_0 \#  4_1}_{\text{$p$ many copies of $4_1$}}$ for any $p>0$.
\end{corollary}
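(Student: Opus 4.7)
The plan is to establish Conjecture~\ref{NVCMM} for $L_p := \underbrace{4_1 \# W^0_0 \# 4_1 \# \dots \# W^0_0 \# 4_1}_{p}$ by induction on $p$, using an extension of Lemma~\ref{CJHUlemma} to Hopf unions of a link with a knot, and then to deduce Conjecture~\ref{tvGM} via Theorem~\ref{relationship}. Throughout, write $t = e^{2\pi i/(N+\frac{1}{2})}$.

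The first step is to extend Lemma~\ref{CJHUlemma}: for any $n$-component link $L$ with colors $\vec{M} = (M_1, \dots, M_n)$ and any knot $K$ with color $M_{n+1}$, the Hopf union along the $n$-th component of $L$ satisfies
$$
\bigl| J_{(\vec{M}, M_{n+1})}(L \# W^0_0 \# K, t) \bigr| = \left|\frac{[M_n M_{n+1}]}{[M_n][M_{n+1}]}\right| \cdot |J_{\vec{M}}(L,t)| \cdot |J_{M_{n+1}}(K,t)|.
$$
This follows by exactly the same skein-theoretic computation as in Appendix~A, since the Hopf-union operation is supported in an embedded solid torus containing the Hopf link together with the two attached arcs, and nothing there uses that the ambient objects are knots rather than links.

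Next I would prove Conjecture~\ref{NVCMM} by induction on $p$. The base case $p=1$ reduces to Conjecture~\ref{NVC} for $4_1$, which follows from Conjecture~\ref{GVC} proved in \cite{WA17}. For the inductive step, write $L_p = L_{p-1} \# W^0_0 \# 4_1$ and apply the extended lemma with all colors equal to $N$ to obtain
$$
\log|J_{\vec{N}}(L_p,t)| = \log|J_{\vec{N}}(L_{p-1},t)| + \log|J_N(4_1,t)| + \log\left|\frac{[N^2]}{[N]^2}\right|.
$$
A direct computation gives $|[N]| \to 1/2$ and $|[N^2]| \leq 1/\sin(\pi/(N+\frac{1}{2})) = O(N)$, so the last term is $O(\log N)$ and disappears after dividing by $N+\frac{1}{2}$. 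Using that each Hopf-link complement and each $P$-piece in the JSJ decomposition of $\SS^3 \backslash L_p$ is Seifert-fibered with zero simplicial volume, the induction yields
$$
\lim_{N\to\infty} \frac{2\pi}{N+\frac{1}{2}} \log|J_{\vec{N}}(L_p,t)| = p\cdot \Vol(\SS^3\backslash 4_1) = v_3||\SS^3\backslash L_p||.
$$

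For Conjecture~\ref{tvGM}, I would apply Theorem~\ref{relationship}. The lower bound is immediate from the single term $\vec{M} = \vec{N}$ computed above. For the matching upper bound, iterate the extended lemma to get
$$
|J_{\vec{M}}(L_p,t)|^2 = \prod_{i=1}^{p-1}\left|\frac{[M_i M_{i+1}]}{[M_i][M_{i+1}]}\right|^2 \prod_{i=1}^{p} |J_{M_i}(4_1,t)|^2,
$$
then combine the elementary estimate $|[M_i]| \geq c > 0$ for $1 \leq M_i \leq N$ with a uniform-in-$M$ bound of the form $|J_M(4_1, t)| \leq \exp\bigl(\tfrac{N+1/2}{2\pi}\Vol(\SS^3\backslash 4_1)\bigr)\cdot \mathrm{poly}(N)$, and sum the polynomially many terms.

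The main obstacle is establishing this uniform upper bound on $|J_M(4_1, e^{2\pi i/(N+\frac{1}{2})})|$ across the full range $1 \leq M \leq N$, since the asymptotic expansion of \cite{WA17} controls only the regime with $M/N$ close to $1$; the remaining range should be handled either via Habiro's cyclotomic expansion of $J_M(4_1,\cdot)$ or by adapting the saddle-point bounds developed in \cite{DKY17}. The extension of the skein-theoretic lemma and the induction on $p$ are otherwise routine.
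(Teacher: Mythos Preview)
Your approach is correct and matches the paper's: the paper remarks that Lemma~\ref{CJHUlemma} extends to Hopf unions with links and then obtains Corollary~\ref{GVCHU} by the same induction you describe, with the passage to Conjecture~\ref{tvGM} following the template of Section~\ref{strategy}.

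The one point to flag is your ``main obstacle''. The uniform upper bound on $|J_M(4_1,e^{2\pi i/(N+\frac{1}{2})})|$ for all $1\le M\le N$ is not a missing ingredient: it is exactly the content of Lemma~\ref{uppbd1} (quoted from \cite{WA17}), which bounds the individual summands $g_M(k)$ of the Habiro-type expansion of $J'_M(4_1,t)$ by $\exp\bigl(\tfrac{N+1/2}{2\pi}\Vol(\SS^3\backslash 4_1)\bigr)$ for \emph{every} limiting ratio $s\in[0,1]$, not just $s$ near $1$. Since $J'_M(4_1,t)$ is a sum of $M\le N$ such terms, this immediately gives the polynomial-times-exponential bound you need, and is precisely how the paper handles the analogous step in Section~\ref{W021041tv}. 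So there is no need to invoke Habiro's cyclotomic expansion or to adapt arguments from \cite{DKY17}; you already have the tool in hand.
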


This result gives another examples of links whose complements consist of more than one hyperbolic pieces (together with some Seifert fibered piece) satisfying the Conjectures~\ref{NVCMM} amd \ref{tvGM}. 

In general, it is possible that $[M_1M_2]=0$. For example, let $p>1$ be an odd number and consider the case where $2N+1=p^2$ and $M_1=M_2=p(p-1)$. Note that the limiting ratios are given by
$$\ds s_i = \lim_{p\to \infty}\frac{p(p-1)}{p^2} = 1 \text{ for $i=1,2$}.$$
However, when $t=e^{\frac{2\pi i}{N+\frac{1}{2}}}$, since
$$ [M_1M_2] 
= \frac{t^{\frac{M_1M_2}{2}}-t^{-\frac{M_1M_2}{2}}}{t^{\frac{1}{2}}-t^{-\frac{1}{2}}} 
= \frac{e^{\frac{2\pi ip^2 (p-1)^2}{p^2}}-e^{-\frac{2\pi ip^2 (p-1)^2}{p^2}}}{e^{\frac{2\pi i}{p^2}}-e^{-\frac{2\pi i}{p^2}}}
=0,
$$
we have $|J_{M_1,M_2}(K_1\#W_0^0\#K_2, e^{\frac{2\pi i}{N+\frac{1}{2}}})| = 0$ for any knots $K_1$ and $K_2$.

\subsection{Further discussion}
There are several interesting results about Conjecture~\ref{tvGM} under cabling. In Theorem 1.5 of \cite{D18}, R.Detcherry proved that Conjecture~\ref{tvGM} is stable under $(2n+1,2)$ cabling. In \cite{DK19}, R. Detcherry and E. Kalfagianni showed that if $M$ satisfies Conjecture~\ref{vctv} and $S$ is an invertible cabling space with a distinguished torus boundary component $T$, then the manifold $M'$ obtained by gluing a component of $\partial S \backslash T$ to a component of $\partial M$ also satisfies Conjecture~\ref{vctv}. Combining these results with Corollary~\ref{cablingtv1} and \ref{GVCHU}, we can obtain more examples of manifolds satisfying Conjecture~\ref{tvGM}.

Besides, we know that the colored Jones polynomials of links are special cases of the Reshetikhin-Turaev invariants, which form a topological quantum field theory (TQFT) \cite{BHMV95}. Since the link complement is obtained by gluing along the boundary tori of each of the pieces in the JSJ decomposition, it is natural to ask:
\begin{question}
Is there any TQFT interpretation for Conjecture~\ref{GVCS}?
\end{question}

\subsection{Outline of this paper}
In Section~\ref{Wabcd41}, we study the volume conjectures for the cabling of the figure eight knot by the Whitehead chains. In Section~\ref{W021041} and \ref{W021041tv}, in order to illustrate the technique of proving the volume conjecture, we first prove Theorem~\ref{mainthm1} and Corollary~\ref{cablingtv1} for the link $W_{0,1,1,0}(4_1)$. In Section~\ref{strategy}, we give a summary of the technique we used in the previous subsection. Theorem~\ref{mainthm1} and Corollary~\ref{cablingtv1} will be proved in Section~\ref{Wa1cd41pf}. Theorem~\ref{mainthm3} and \ref{mainthm2} will be proved in Section~\ref{Wp0110W021041} and \ref{Walphabeta} respectively. Finally, the proof of Lemma~\ref{CJHUlemma} will be included in Appendix A. 

\subsection{Acknowledgements}

The author would like to thank his advisor Tian Yang for his guidance. Moreover, the author would like to thank Effie Kalfagianni for comments on this work.

\section{Cabling of the figure eight knot by Whitehead chains}\label{Wabcd41}
\subsection{Volume conjecture for $J_{N}(W_{0,1,1,0}(4_1),e^{\frac{2\pi i}{N+\frac{1}{2}}})$}\label{W021041}

First of all, we compute the $N$-th colored Jones polynomials of $W_{0,1,1,0}(4_1)$. Let 
$$(t)_n = \prod_{k=1}^n(1-t^k)$$
Recall that the $M$-th normalized colored Jones polynomials of the figure eight knot and the $(N,N)$-th colored Jones polynomials of the Whitehead link $W_{0,1,1,0}$ are given by \cite{HZ07}
\begin{align*} 
J'_{M}(4_{1},t) &= \sum_{k=0}^{M-1} \frac{t^{-k M}}{1-t^{M}} \cdot \frac{(t)_{M+k}}{(t)_{M-k-1}} \\
J_{\vec{N}}(W_{0,1,1,0}, t) &=  \sum_{n=0}^{N-1} \frac{t^{(2n+1)/2}-t^{-(2n+1)/2}}{t^{\frac{1}{2}}-t^{-\frac{1}{2}}}  
\cdot \lt(\frac{t^{N(2n+1)/2}-t^{-N(2n+1)/2}}{t^{(2n+1)/2}-t^{-(2n+1)/2}}\rt)
\cdot C(n,t; N)
\end{align*}
respectively, where
$$C(n,t; N) = t^{\frac{N^2-1}{2} + \frac{N(N-1)}{2}} \sum_{l = 0}^{N-1-n} t^{-N(l+n)} \cdot \frac{(t)_{N-l-1}(t)_{l+n}}{(t)_n(t)_{N-l-n-1}(t)_l}$$
By equation (2.9) in \cite{HZ07}, the $N$-th colored Jones polynomial of $W_{0,1,1,0}(4_1)$ is given by
\begin{align*}
&J_{N} (W_{0,1,1,0}(4_1), t) \\
&= t^{\frac{N^2-1}{2} + \frac{N(N-1)}{2}} \sum_{n=0}^{N-1} \frac{t^{(2n+1)/2}-t^{-(2n+1)/2}}{t^{\frac{1}{2}}-t^{-\frac{1}{2}}} \cdot C(n,t; N)  \cdot J'_{2n+1}(4_1,t) \\
&= \frac{t^{\frac{N^2-1}{2} + \frac{N(N-1)}{2}}}{1-t}
\sum_{n=0}^{N-1} \sum_{l = 0}^{N-1-n} \sum_{k=0}^{2n} t^{-n-k}
\lt(t^{-N(l+n) }\frac{(t)_{N-l-1}(t)_{l+n}}{(t)_n(t)_{N-l-n-1}(t)_l}\rt)
\lt(t^{-2nk} \frac{(t)_{2n+1+k}}{(t)_{2n-k}}  \rt)
\end{align*}

When $t=e^{\frac{2\pi i}{N+\frac{1}{2}}}$, we have the following upper bound estimates.

\begin{lemma}\label{uppbd1}(Lemma 1 in \cite{WA17})
For $k,l \in  \{1,2,\dots, M-1\}$, let 
\begin{align*}
g_{M}(k) 
= \left| \frac{(t)_{M+k}}{(t)_{M-k-1}} \right| 
\end{align*}
For each $M$, let $k_{M} \in \{ 1,\dots, M-1 \}$ such that $g_M(k_M)$ achieves the maximum among all $g_M (k)$. Assume that $ \frac{M}{N+\frac{1}{2}} \to s$ and $\frac{k_M}{N+\frac{1}{2}} \to k_{s}$ as $N \to \infty$. Then we have
\[ \lim_{r \to \infty}\frac{1}{N+\frac{1}{2}} \log( g_M (k_M) ) = -\frac{1}{2\pi}\left( \Lambda(\pi(k_s -s)) + \Lambda(\pi(k_s +s)) \right) \leq \frac{\Vol(\SS^3 \backslash 4_1)}{2\pi}.\]
Furthermore, the equality holds if and only if $(s=1$ and $k_s = \frac{5}{6})$ or $(s=\frac{1}{2}$ and $k_s = \frac{1}{3})$.
\end{lemma}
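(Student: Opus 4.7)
The plan is to recognize $\log g_M(k_M)$ as a Riemann sum, identify its limit with a combination of Lobachevsky values, and then optimize. Since $(t)_n = \prod_{j=1}^n (1-t^j)$, the ratio telescopes to $g_M(k) = \prod_{j=M-k}^{M+k}|1-t^j|$, and at $t = e^{2\pi i/(N+\frac{1}{2})}$ each factor equals $2|\sin(\pi j/(N+\frac{1}{2}))|$. Therefore
\begin{equation*}
\frac{1}{N+\frac{1}{2}}\log g_M(k_M) \;=\; \frac{1}{N+\frac{1}{2}}\sum_{j=M-k_M}^{M+k_M}\log\Bigl|2\sin\Bigl(\tfrac{\pi j}{N+\frac{1}{2}}\Bigr)\Bigr|,
\end{equation*}
which, under the hypothesis $M/(N+\frac{1}{2})\to s$ and $k_M/(N+\frac{1}{2})\to k_s$, is a Riemann sum converging to $\int_{s-k_s}^{s+k_s}\log|2\sin(\pi x)|\,dx$. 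The substitution $u=\pi x$, the definition $\Lambda(\theta) = -\int_0^\theta \log|2\sin t|\,dt$, and the oddness of $\Lambda$ then produce the combination of Lobachevsky values appearing in the statement.

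The subtle point in this first step is that $\log|2\sin(\pi x)|$ has integrable logarithmic singularities at integer $x$, which may fall inside or on the boundary of $[s-k_s,\,s+k_s]$. I would handle these by excising an $\epsilon$-neighborhood of each singular point, bounding the continuous contribution by $O(\epsilon\log\epsilon)$ via integrability and controlling the discrete sum over the $O(\epsilon(N+\frac{1}{2}))$ indices in the excised region using the worst-case estimate $|\log|2\sin(\pi/(N+\frac{1}{2}))|| = O(\log N)$, before letting $\epsilon \to 0$ after $N \to \infty$. For the equality claim, writing $F(s,k_s)$ for the limit function and using $\Lambda'(\theta) = -\log|2\sin\theta|$, the first-order conditions $\partial F/\partial k_s = \partial F/\partial s = 0$ reduce to
\begin{equation*}
|4\sin(\pi(k_s+s))\sin(\pi(k_s-s))| = 1, \qquad |\sin(\pi(k_s+s))| = |\sin(\pi(k_s-s))|.
\end{equation*}
The second equation forces $s \in \{1/2,\,1\}$ in the allowed range; substituting into the first yields $\cos(2\pi k_s) = 1/2$ at $s=1$ and $\cos(2\pi k_s) = -1/2$ at $s=1/2$, producing exactly the interior critical points $(1,\,5/6)$ and $(1/2,\,1/3)$. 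At both points, the periodicity and oddness of $\Lambda$ combined with the Kummer duplication identity $3\Lambda(\pi/3) = 2\Lambda(\pi/6)$ reduce $F$ to $\Vol(\SS^3 \backslash 4_1)/(2\pi)$, since $\Vol(\SS^3 \backslash 4_1) = 6\Lambda(\pi/3)$; a boundary check ($k_s=0$, $k_s=s$, $s=0$) confirms these are global maxima.

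The main obstacle I expect is applying the Riemann-sum limit at the \emph{maximizing} index $k_M$ rather than a pre-specified sequence with a fixed limiting ratio. This I would resolve by a subsequence argument: along any subsequence with $k_{M}/(N+\frac{1}{2}) \to k_s^*$ the analysis above produces the limit $F(s, k_s^*)$, and since $k_M$ maximizes $g_M(\cdot)$ for each $M$, a comparison with the Riemann-sum values at prescribed target ratios forces $k_s^*$ to attain $\sup_{k\in[0,s]} F(s, k)$. This simultaneously yields the formula, the upper bound by $\Vol(\SS^3 \backslash 4_1)/(2\pi)$, and the characterization of the equality cases.
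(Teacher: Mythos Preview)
The paper does not supply its own proof of this lemma: it is quoted verbatim as Lemma~1 of \cite{WA17} and used as a black box. So there is nothing in the present paper to compare against; your proposal stands or falls on its own.

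Your argument is the standard one and is essentially correct. The telescoping $g_M(k)=\prod_{j=M-k}^{M+k}|1-t^j|$, the Riemann-sum limit, and the reduction to Lobachevsky values are all sound, and your treatment of the logarithmic singularity at $x=1$ (which does lie inside $[s-k_s,s+k_s]$ at the relevant point $(1,5/6)$) is adequate. Two small points are worth tightening. First, your claim that $\partial F/\partial s=0$ ``forces $s\in\{1/2,1\}$'' is incomplete: the equation $|\sin\pi(k_s+s)|=|\sin\pi(k_s-s)|$ also admits the branches $k_s\in\{0,1/2\}$, giving further critical points such as $(2/3,1/2)$ and $(5/6,0)$; these all yield $F\le 0$ and so do not affect the conclusion, but they should be checked rather than omitted. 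Second, at $s=1$ and $s=1/2$ your first equation produces two solutions for $k_s$ in each case (e.g.\ $k_s=1/6$ as well as $5/6$ at $s=1$); again the extra root is a minimum, but the argument as written selects only the desired one without justification. With these cases accounted for, the optimization is complete and the equality characterization follows.
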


\begin{lemma}\label{uppbdW}(Lemma 5 in \cite{W19}) 
For any $n \in \{1,2,\dots,M-1\}$, $l \in \{1,2,\dots, M-1-n\}$, let 
$$ c_{M}(n,l;t) = \lt| \frac{(t)_{N-l-1}(t)_{l+n}}{(t)_n(t)_{N-l-n-1}(t)_l} \rt| $$
For each $M$, let $n_M\in \{1,2,\dots,M-1\}$ and $l_M \in \{1,2,\dots, M-1-n\}$ such that $c_M(n_M,l_M)$ achieves the maximum among all $c_{M}(n,l)$. Assume that $\frac{M}{N+\frac{1}{2}} \to s \in [0,1]$, $\frac{n_M}{N+\frac{1}{2}} \to n_s$ and $\frac{l_M}{N+\frac{1}{2}} \to l_s$. Then we have
$$ \lim_{N \to \infty} \frac{1}{N+\frac{1}{2}} \log(c_M(n,l;e^{\frac{2\pi i}{N+\frac{1}{2}}})) \leq \frac{\Vol(\SS^3\backslash WL)}{2\pi} $$
Furthermore, the equality holds if and only if 
$ s=1, n_s = \frac{1}{2} \text{ and } l_s = \frac{1}{4}$.
\end{lemma}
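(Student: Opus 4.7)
The plan is to reduce the asymptotics of $c_N(n,l;t)$ at the root $t=e^{\frac{2\pi i}{N+\frac{1}{2}}}$ to a Riemann-sum approximation involving the Lobachevsky function $\Lambda(\theta)=-\int_0^\theta \log|2\sin u|\, du$, and then solve the resulting two-variable constrained optimization problem. Starting from $|1-e^{\frac{2\pi i k}{N+\frac{1}{2}}}|=2|\sin(\frac{\pi k}{N+\frac{1}{2}})|$, one writes
\begin{align*}
\frac{1}{N+\frac{1}{2}}\log\bigl|(t)_k\bigr| = \frac{1}{N+\frac{1}{2}}\sum_{j=1}^{k}\log\bigl|2\sin\tfrac{\pi j}{N+\frac{1}{2}}\bigr| \; \xrightarrow[N\to\infty]{} \; -\frac{1}{\pi}\Lambda(\pi x)
\end{align*}
whenever $\frac{k}{N+\frac{1}{2}}\to x\in(0,1)$. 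Applying this to each of the five Pochhammer symbols in the definition of $c_N(n,l;t)$, with $\frac{n_N}{N+\frac{1}{2}}\to n_s$, $\frac{l_N}{N+\frac{1}{2}}\to l_s$, and the derived ratios $1-l_s$, $l_s+n_s$, $1-l_s-n_s$, I would obtain a closed-form limit expressed as an alternating sum of five values of $\Lambda$.

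Next I would invoke the symmetry $\Lambda(\pi-\theta)=-\Lambda(\theta)$ to collapse this expression to the two-variable function
\begin{align*}
F(n_s,l_s) = \frac{1}{\pi}\bigl[\,2\Lambda(\pi l_s) - 2\Lambda(\pi(l_s+n_s)) + \Lambda(\pi n_s)\,\bigr]
\end{align*}
on the simplex $\{n_s,l_s\geq 0,\; n_s+l_s\leq 1\}$. Using $\Lambda'(\theta)=-\log|2\sin\theta|$, the critical-point equations $\partial_{l_s}F=\partial_{n_s}F=0$ reduce to $|\sin(\pi l_s)|=|\sin(\pi(l_s+n_s))|$ together with $4\sin^2(\pi(l_s+n_s))=2\sin(\pi n_s)$. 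The first forces $2l_s+n_s=1$, and substituting into the second yields $\sin(\pi l_s)=\cos(\pi l_s)$, so the unique interior critical point is $(n_s,l_s)=(\frac{1}{2},\frac{1}{4})$.

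To identify this critical point as the global maximum I would show that $F$ vanishes on the boundary of the simplex (where one of the arguments of $\Lambda$ becomes $0$ or $\pi$, causing the corresponding Pochhammer symbol to contribute trivially) and check the Hessian condition at the interior critical point, ruling out saddle behavior. Evaluating at $(\frac{1}{2},\frac{1}{4})$ using $\Lambda(\frac{\pi}{2})=0$ and $\Lambda(\frac{3\pi}{4})=-\Lambda(\frac{\pi}{4})$ gives $F(\frac{1}{2},\frac{1}{4})=\frac{4\Lambda(\pi/4)}{\pi}=\frac{\Vol(\SS^3\backslash WL)}{2\pi}$ via the classical identity $\Vol(\SS^3\backslash WL)=8\Lambda(\frac{\pi}{4})$.

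The main obstacle is twofold: first, the Riemann-sum approximation loses uniformity near the endpoints where $\log|2\sin|$ diverges, so one must bound the boundary contributions carefully (typically via the convexity of $-\log|2\sin|$ on $(0,\pi/2)$ or an explicit error estimate on the quantum dilogarithm); second, the lemma concerns a uniform upper bound on the finite-$N$ maximum $c_N(n_N,l_N;t)$ rather than a mere pointwise limit, so one needs to verify that the maximizers do not escape to the boundary as $N\to\infty$, or else handle the degenerate case separately and observe that the boundary values are strictly smaller than $\Vol(\SS^3\backslash WL)/(2\pi)$.
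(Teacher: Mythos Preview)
The paper does not prove this lemma itself; it is quoted as Lemma~5 of \cite{W19} and used as a black box throughout. So there is no in-paper argument to compare against, and your Riemann-sum reduction to a Lobachevsky optimization is indeed the standard route taken in that reference.

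Two corrections are needed, however. First, your claim that $F$ vanishes on the boundary of the simplex is wrong on the edge $n_s+l_s=1$: there $\Lambda(\pi(l_s+n_s))=0$, and what remains is
\[
F(1-l_s,\,l_s)=\tfrac{1}{\pi}\bigl[2\Lambda(\pi l_s)+\Lambda(\pi(1-l_s))\bigr]=\tfrac{1}{\pi}\Lambda(\pi l_s),
\]
strictly positive on $(0,\tfrac12)$ with maximum $\tfrac{1}{\pi}\Lambda(\tfrac{\pi}{6})$. Likewise on $l_s=0$ one gets $F=-\tfrac{1}{\pi}\Lambda(\pi n_s)\le 0$, not identically zero. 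The edge value $\tfrac{1}{\pi}\Lambda(\tfrac{\pi}{6})$ is still well below the interior critical value $\tfrac{4}{\pi}\Lambda(\tfrac{\pi}{4})$, so the conclusion survives once you replace ``vanishes'' with the honest comparison you already allude to in your obstacles paragraph. Second, and more substantively, you have silently fixed $M=N$, i.e.\ $s=1$. The lemma asserts equality \emph{only if} $s=1$, so general $s$ must be treated. Reading the displayed formula with $M$ in place of $N$ (as the constraints $n\le M-1$, $l\le M-1-n$ and the applications in Section~3 make clear is intended), the limiting potential acquires arguments $\pi(s-l_s)$ and $\pi(s-l_s-n_s)$ in place of $\pi(1-l_s)$ and $\pi(1-l_s-n_s)$; the reflection identity $\Lambda(\pi-\theta)=-\Lambda(\theta)$ no longer collapses them, and one must check that the resulting one-parameter family of maxima is strictly below $\tfrac{4}{\pi}\Lambda(\tfrac{\pi}{4})$ for every $s<1$. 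That step is absent from your outline.
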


From Lemma~\ref{uppbd1} and \ref{uppbdW}, we have
$$\limsup_{N \to \infty} \frac{1}{N+\frac{1}{2}}\log|J_{N} (W_{0,1,1,0}(4_1), e^{\frac{2\pi i}{N+\frac{1}{2}}}) | \leq \frac{\Vol(\SS^3\backslash WL)+\Vol(\SS^3 \backslash 4_1)}{2\pi} = \frac{v_3 ||W_{0,1,1,0}(4_1) || }{2\pi}$$

For $\eta >0,$ we let 
$$ D_\eta = \lt\{(n,l,k) \mid \lt|\frac{n}{N+\frac{1}{2}}-\frac{1}{2}\rt|, \lt|\frac{l}{N+\frac{1}{2}} - \frac{1}{4}\rt| , \lt|\frac{k}{N+\frac{1}{2}} - \frac{5}{6}\rt| < \eta \rt\}$$
At the end we will show that the exponential growth rate is exactly $\dfrac{v_3 ||W_{0,1,1,0}(4_1) || }{2\pi}$. In particular, we can write
\begin{align}
&J_{N} (W_{0,1,1,0}(4_1), e^{\frac{2\pi i}{N+\frac{1}{2}}}) \notag \\
&\stackrel[N \to \infty]{\sim}{} 
\frac{t^{\frac{N^2-1}{2} + \frac{N(N-1)}{2}}}{1-t}
\sum_{D_\delta} t^{- n - k}
\lt(t^{\frac{l+n}{2} }\frac{(t)_{N-l-1}(t)_{l+n}}{(t)_n(t)_{N-l-n-1}(t)_l}\rt)
\lt(t^{-2nk} \frac{(t)_{2n+1+k}}{(t)_{2n-k}}  \rt)
\end{align}

For any odd integer $r=2N+1\geq 3$, we consider the following version of the quantum dilogarithm function $\varphi_r(z)$ (see \cite{F95, FKV01}, or Section 2.3 in \cite{WY20} for a review of its properties) defined by 
\begin{align*}
\varphi_r (z) = \frac{4\pi i}{r}\int_\Omega \frac{e^{(2z - \pi)x} }{4x \sinh(\pi x) \sinh (\frac{2\pi x}{r} )} dx,
\end{align*}
where 
\begin{align*}
\Omega = (-\infty, - \epsilon] \cup \{ z \in \CC \mid |z| = \epsilon, \im z >0\} \cup [ \epsilon, \infty)
\end{align*}
for some $\epsilon \in (0,1)$ and 
\begin{align*}
z \in \lt\{ z \in \CC \lt\vert - \frac{\pi}{2N+1} < \Re z < \pi + \frac{\pi}{2N+1}\rt. \rt\}
\end{align*}
For any $z \in \CC$ with $0 < \Re z < \pi$, the quantum dilogarithm function satisfies the functional equation (Lemma 2.1)1) in \cite{WY20})
\begin{align}
1 - e^{2i z} = \exp \lt(\frac{N+\frac{1}{2}}{2\pi i}\lt( \varphi_r \lt( z - \frac{\pi }{2N+1} \rt) - \varphi_r \lt( z + \frac{\pi }{2N+1} \rt) \rt)\rt) \label{QDfunc}
\end{align}
Also, for $t=e^{\frac{2\pi i}{N+\frac{1}{2}}}$, we have (Lemma 2.2)1) in \cite{WY20})
\begin{align}\label{t!toQD}
(t)_n = e^{\frac{r}{4\pi i}\lt( \varphi_r\lt(\frac{\pi}{r}\rt) - \varphi_r\lt(\frac{2\pi n}{r}+\frac{\pi}{r}\rt)\rt)}
\end{align}
for $0\leq n \leq r-2$.

Furthermore, let $\Li: \CC\backslash (1,\infty) \to \CC$ be the dilogartihm function defined by
$$ \Li(z) = - \int_0^z \frac{\log(1-u)}{u} du $$
For any $z$ with $0<\Re z < \pi$, the quantum dilogarithm function satisfies (Lemma 2.3 in \cite{WY20})
\begin{align}
\varphi_r (z) = \Li(e^{2iz}) + \frac{2\pi^2 e^{2iz}}{3(1-e^{2iz})} \frac{1}{(2N+1)^2} + O\lt(\frac{1}{(N+\frac{1}{2})^3}\rt) \label{QtoC},
\end{align}
and as $N \to \infty$, $\varphi_r(z)$ uniformly converges to $\Li(e^{2iz})$ on any compact subset of $\{z \in \CC \mid 0 < \Re z < \pi \}$.

Using (\ref{t!toQD}), we can write
\begin{align}
&J_{N} (W_{0,1,1,0}(4_1), e^{\frac{2\pi i}{N+\frac{1}{2}}}) \notag\\
&\stackrel[N \to \infty]{\sim}{} 
- \frac{N+\frac{1}{2}}{\pi} \exp\lt(  -\frac{r}{4\pi i} \varphi_r \lt(  \frac{\pi}{2N+1}\rt)\rt)
 \sum_{D_\eta } 
 e^{\frac{-2\pi i}{N+\frac{1}{2}}(-\frac{n}{2}+\frac{l}{2}-k)}
 \notag\\ 
&\qquad \times
\exp\lt( (N+\frac{1}{2}) \Phi_{N,N}\lt(W_{0,1,1,0}(4_1); \frac{n}{N+\frac{1}{2}}, \frac{l}{N+\frac{1}{2}}, \frac{k}{N+\frac{1}{2}}\rt) \rt)
\end{align}
where $\Phi_{N} \lt(W_{0,1,1,0}(4_1); z_1, z_2, z_3\rt) $ is given by
\begin{align}
\Phi_{N}\lt(W_{0,1,1,0}(4_1); z_1, z_2, z_3\rt) 
&= \Phi_{N} (W_{0,1,1,0};z_1,z_2) + \Psi_N(4_1;z_1,z_3)
\end{align}
with
\begin{align}
\Phi_{N} (W_{0,1,1,0};z_1,z_2) 
&= \frac{1}{2\pi i} 
\lt[\varphi_r \lt( \frac{N\pi}{N+\frac{1}{2}}  - \pi z_1 - \pi z_2 -\frac{\pi}{2N+1}\rt) \rt.  \notag\\
&\qquad  - \varphi_r\lt(  \frac{N\pi}{N+\frac{1}{2}} - \pi z_2- \frac{\pi}{2N+1}\rt) \notag \\
&\qquad  + \varphi_r \lt(\pi z_2 + \frac{\pi}{2N+1}\rt)  - \varphi_r\lt( \pi z_1 + \pi z_2 + \frac{\pi}{2N+1}\rt) \notag\\
&\lt.\qquad  + \varphi_r\lt( \pi z_1 + \frac{\pi}{2N+1}\rt) \rt] \\
\Psi_N\lt(4_1; z_1,z_3 \rt) 
&= \frac{1}{2\pi i} \lt[ \varphi_r \lt( - \pi z_3 + 2\pi z_1 + \frac{\pi}{2N+1} \rt)  \rt.\notag\\
&\lt.\qquad - \varphi_r\lt( \pi z_3 + 2\pi z_1 + \frac{3\pi}{2N+1} \rt) \rt] - 4\pi i z_1z_3
\end{align}

Let $N\to \infty$. We have
\begin{align}
\Phi\lt(W_{0,1,1,0}(4_1); z_1, z_2, z_3\rt) 
&= \Phi(W_{0,1,1,0};z_1,z_2) + \Phi(4_1;z_1,z_3)\\
\Phi(W_{0,1,1,0};z_1,z_2) 
&= \frac{1}{2\pi i} \lt[ \Li\lt(e^{- 2\pi i z_1 - 2\pi i z_2}\rt) - \Li\lt( e^{ - 2\pi i z_2 }\rt) + \Li \lt(e^{2\pi i z_2}\rt) \rt. \notag\\
&\qquad \lt. - \Li\lt(e^{2\pi iz_1+ 2\pi i z_2}\rt)
 + \Li\lt(e^{2\pi i z_1}\rt)  \rt] \\
\Psi\lt(4_1; z_1,z_3 \rt) 
&= \frac{1}{2\pi i}\lt[ \Li(e^{-2\pi i z_3 + 4 \pi i z_1}) - \Li(e^{2\pi i z_3 + 4 \pi i z_1}) \rt] - 4\pi i z_1z_3
\end{align}
Note that the functions above are well-defined on
$$ D = \lt\{(z_1,z_2,z_3) \mid \lt|z_1-\frac{1}{2}\rt|, \lt|z_2 - \frac{1}{4}\rt| , \lt|z_3 - \frac{5}{6}\rt| < \eta \rt\}$$
for some sufficiently small $\eta>0$.

It is important to note that the function $\Psi\lt(4_1; z_1,z_3 \rt)$ is the same as the function $\Phi^{(s)}(z) - 2\pi i z$ studied in \cite{WA17} with $s=2z_1$ and $z=z_3$. This suggests that the parameter $2z_1$ can be understood as choosing the hyperbolic structure of the figure eight knot complement. In particular, to get the complete hyperbolic structure of the figure eight knot complement, we should take $z_1=\frac{1}{2}$.

By (\ref{QtoC}) and considering the Talyor series expansion (see Lemma 2 in \cite{W19} for a similar computation), we have
\begin{align*}
&\exp\lt( \lt(N+\frac{1}{2}\rt) \Phi_{N}(W_{0,1,1,0}(4_1), z_1,z_2,z_3) \rt)\\
&= E(W_{0,1,1,0}(4_1); z_1,z_2,z_3)
\exp\lt( \lt(N+\frac{1}{2}\rt) \Phi(W_{0,1,1,0}(4_1), z_1,z_2,z_3) \rt)\lt(1+O\lt(\frac{1}{N+\frac{1}{2}}\rt)\rt),
\end{align*}
where
\begin{align*}
E(W_{0,1,1,0}(4_1); z_1,z_2,z_3)
&= \exp\lt(\log(1-e^{-2\pi i z_1 - 2\pi i z_2}) - \log(1-e^{-2\pi i z_2}) - \frac{1}{2}\log(1-e^{2\pi i z_2}) \rt.\\
&\qquad + \frac{1}{2}\log(1-e^{2\pi i z_1 + 2\pi i z_2}) - \frac{1}{2}\log(1-e^{2\pi i z_1}) - \frac{1}{2}\log(1-e^{-2\pi i z_3+4\pi i z_1}) \\
&\qquad \lt.  + \frac{3}{2}\log(1-e^{2\pi i z_3+4\pi i z_1}) \rt)
\end{align*}

In particular, $E(W_{0,1,1,0}(4_1); z_1,z_2,z_3)$ is always nonzero and does not affect the exponential growth rate.

Next, note that the critical point equations for the potential function $\Phi(W_{0,1,1,0}(4_1);z_1,z_2,z_3)$ are given by 
\begin{empheq}[left = \empheqlbrace]{align}
0 = \frac{\partial \Phi(W_{0,1,1,0}(4_1); z_1,z_2,z_3) }{\partial z_1} &= \frac{\partial \Phi(W_{0,1,1,0};z_1,z_2)}{\partial z_1}+\frac{\partial \Psi(4_1;z_1,z_3)}{\partial z_1} \\
0 = \frac{\partial \Phi(W_{0,1,1,0}(4_1); z_1,z_2,z_3) }{\partial z_2} &= \frac{\partial \Phi(W_{0,1,1,0};z_1,z_2)}{\partial z_2}\\
0 = \frac{\partial \Phi(W_{0,1,1,0}(4_1); z_1,z_2,z_3) }{\partial z_3} &= \frac{\partial \Psi(4_1;z_1,z_3)}{\partial z_3}
\end{empheq}

Recall from Section 2.2 of \cite{W19} that $(z_1,z_2)=(\frac{1}{2},\frac{1}{4})$ is the critical point of the potential function $\Phi(W_{0,1,1,0};z_1,z_2)$ (which is $\Phi(WL,z_1,z_2)$ in \cite{W19}). As a result, if we put $(z_1,z_2)=(\frac{1}{2},\frac{1}{4})$, the system of critical point equations become
\begin{empheq}[left = \empheqlbrace]{align}
0 &= \frac{\partial \Psi(4_1; \frac{1}{2},z_3)}{\partial z_1} 
= -2\log(1-e^{-2\pi i z_3}) + 2\log(1-e^{2\i i z_3}) - 4\pi i z_3\\
0 &= \frac{\partial \Psi(4_1; \frac{1}{2},z_3)}{\partial z_3} 
= \log(1- e^{-2\pi i z_3}) + \log(1-e^{2\i i z_3}) - 2\pi i
\end{empheq}
Note that the second equation is the critical point equation of the potential function $\Psi(4_1;\frac{1}{2},z_3)$ (which is the function $\Phi^{(1)}(z)$ in \cite{WA17} minus to $2\pi i z_3$). Recall from Section 1.4 in \cite{WA17} that $z_3=\frac{5}{6}$ is a critical point of the function $\Phi^{(1)}(z)$. In particular, when $z_3 = \frac{5}{6}$, we have
\begin{align}
\frac{\partial \Psi(4_1; \frac{1}{2}, \frac{5}{6})}{\partial z_3}  = -2\pi i
\end{align}
Finally, by direct computation, we have
\begin{align}
\frac{\partial \Psi(4_1; \frac{1}{2},\frac{5}{6})}{\partial z_1} 
= -2\log(1-e^{-5\pi i/3}) + 2\log(1-e^{5\pi i/3}) - 10\pi i /3
=  -2\pi i
\end{align}
As a result, the point $(z_1,z_2,z_3) = \lt(\frac{1}{2}, \frac{1}{4}, \frac{5}{6}\rt)$ is indeed a critical point of the Fourier coefficient 
$$\Phi(W_{0,1,1,0}(4_1); z_1,z_2,z_3) + 2\pi i z_1 + 2\pi i z_3$$
with critical value
\begin{align}
\Phi\lt(W_{0,1,1,0}(4_1); \frac{1}{2}, \frac{1}{4}, \frac{5}{6}\rt) + 2\pi i \lt(\frac{1}{2}\rt) + 2\pi i \lt(\frac{5}{6}\rt) 
&= \frac{v_3 ||W_{0,1,1,0}(4_1) || + 2 \pi^2 i}{2\pi}
\end{align}

Moreover, the Hessian of the potential function at the critical point is given by
\begin{align}
\Hess\lt(\Phi\lt(W_{0,1,1,0}(4_1); \frac{1}{2},\frac{1}{4},\frac{5}{6}\rt)\rt)
=
2\pi i
\begin{pmatrix}
i + \frac{7}{2} & i & 2+2\sqrt{3} i \\
i & 2i & 0 \\
2+2\sqrt{3}i & 0 & \sqrt{3}i
\end{pmatrix}
\end{align}
with non-zero determinant.

Therefore, by the Poisson summation formula and saddle point approximation (Proposition 4.6 and Proposition 3.5 in \cite{O52}) (see for example Section 2.2 in \cite{W19} about how the apply them given the above coniditions), we have

\begin{align}
&J_{N} (W_{0,1,1,0}(4_1), e^{\frac{2\pi i}{N+\frac{1}{2}}}) \notag\\
&\stackrel[N \to \infty]{\sim}{} 
- \frac{N+\frac{1}{2}}{\pi} \exp\lt(  -\frac{r}{4\pi i} \varphi_r \lt(  \frac{\pi}{2N+1}\rt)\rt)
 \sum_{D_\eta }  e^{\frac{2\pi i}{N+\frac{1}{2}}(-\frac{n}{2}+\frac{l}{2}-k)}
 \notag\\ 
&\qquad \times
\exp\lt( \lt(N+\frac{1}{2}\rt) \Phi_{N}\lt(W_{0,1,1,0}(4_1); \frac{n}{N+\frac{1}{2}}, \frac{l}{N+\frac{1}{2}}, \frac{k}{N+\frac{1}{2}}\rt) \rt)
 \notag\\
&\stackrel[N \to \infty]{\sim}{} 
\frac{-(N+\frac{1}{2})}{\pi}\exp\lt(-\frac{r}{4\pi i}\varphi_r \lt( \frac{\pi}{2N+1} \rt) \rt) 
\lt(N+\frac{1}{2}\rt)^{3}
\int\int_{D} e^{-\pi i z_1 + \pi i z_2 - 2\pi i z_3}
 \notag\\ 
&\qquad \times
\exp\lt( \lt(N+\frac{1}{2}\rt) (\Phi_{N}\lt(W_{0,1,1,0}(4_1); z_1, z_2, z_3\rt) 
+ 2\pi iz_1 + 2\pi i z_3) \rt)  dz_1 dz_2 dz_3 \notag \\
&\stackrel[N \to \infty]{\sim}{} 
\frac{-(N+\frac{1}{2})}{\pi}\exp\lt(-\frac{r}{4\pi i}\varphi_r \lt( \frac{\pi}{2N+1} \rt) \rt) 
\lt(N+\frac{1}{2}\rt)^{3}
\int\int_{D} e^{-\pi i z_1 + \pi i z_2 - 2\pi i z_3}
 \notag\\ 
&\qquad \times
E(W_{0,1,1,0}(4_1); z_1,z_2,z_3)\exp\lt( \lt(N+\frac{1}{2}\rt) (\Phi\lt(W_{0,1,1,0}(4_1); z_1, z_2, z_3\rt) + 2\pi i z_1 + 2\pi i z_3 )\rt)  dz_1 dz_2 dz_3 \notag \\
&\stackrel[N \to \infty]{\sim}{} 
\frac{-(N+\frac{1}{2})}{\pi}\exp\lt(-\frac{r}{4\pi i}\varphi_r \lt( \frac{\pi}{2N+1} \rt) \rt) \lt(N+\frac{1}{2}\rt)^{3/2} e^{-\frac{23\pi i}{12}} \notag\\
&\qquad \times \frac{(2\pi)^{3/2}E\lt(W_{0,1,1,0}(4_1); \frac{1}{2},\frac{1}{4},\frac{5}{6}\rt)}{\sqrt{ -\det \Hess(\Phi(W_{0,1,1,0}(4_1); \frac{1}{2},\frac{1}{4},\frac{5}{6}))}} \exp \lt(  \frac{N+\frac{1}{2}}{2\pi} \lt(v_3 ||W_{0,1,1,0}(4_1) || +  2 \pi^2 i \rt) \rt)
\end{align}

By Lemma A.3 in \cite{O52}, we have
\begin{align}\label{QDFcoe}
\exp\lt(-\frac{r}{4\pi i}\varphi_r \lt( \frac{\pi}{2N+1} \rt) \rt) 
&\stackrel[N \to \infty]{\sim}{ } e^{-\frac{\pi i}{4}}\lt(N+\frac{1}{2}\rt)^{-\frac{1}{2}}\exp\lt( \frac{N+\frac{1}{2}}{2\pi} \frac{\pi^2 i}{6} \rt) 
\end{align}
In particular, this factor decays polynomially. Thus, we have
\begin{align}
\lim_{N\to\infty} \frac{2\pi}{N+\frac{1}{2}} \log |J_{N} (W_{0,1,1,0}(4_1), e^{\frac{2\pi i}{N+\frac{1}{2}}})|  
=v_3 ||W_{0,1,1,0}(4_1) ||
\end{align}

\subsection{Volume conjecture for $TV_r(\SS^3\backslash W_{0,1,1,0}(4_1), e^{\frac{2\pi i}{N+\frac{1}{2}}})$}\label{W021041tv}
Next, by Lemma~\ref{uppbd1} and \ref{uppbdW}, for any $0\leq M \leq N$, we have
$$ \limsup_{N \to \infty} \frac{2\pi}{N+\frac{1}{2}}\lt|J_{M} \lt( W_{0,1,1,0}(4_1) , e^{\frac{2\pi i}{N+\frac{1}{2}}} \rt) \rt| \leq v_3 ||W_{0,1,1,0}(4_1) || $$
As a result, by squeeze theorem, it is easy to show that
\begin{align}
&\lim_{\substack{ r\to \infty \\r \text{ odd}}} \frac{2\pi}{r} \log|TV_r (\SS^3 \backslash W_{0,1,1,0}(4_1))| \notag\\
&= \lim_{\substack{ r\to \infty \\r \text{ odd}}} \frac{2\pi}{r} 
\log\lt|2\lt( \frac{2\sin(\frac{2\pi}{r})}{\sqrt{r}}\rt)^{2} \sum_{1\leq M \leq \frac{r-1}{2}}\left|\tilde{J}_{M}\left(L,e^{\frac{2\pi i }{N+\frac{1}{2}}}\right)\right|^2 \rt|^2 \notag\\
&=
\lim_{N \to \infty} \frac{2\pi}{N+\frac{1}{2}}\log\lt|J_{N} \lt( W_{0,1,1,0}(4_1) , e^{\frac{2\pi i}{N+\frac{1}{2}}} \rt) \rt| \notag\\
&= v_3 ||W_{0,1,1,0}(4_1) || 
\end{align}

\subsection{A summary of the strategy of proving the volume conjectures}\label{strategy}
In this subsection, we summarize the strategy we used to prove the volume conjectures. This strategy will be used repeatedly in the rest of this paper.
\begin{enumerate}
\item
To compute the asymptotic expansion formula for $J_{N}(W_{0,1,1,0}(4_1),e^{\frac{2\pi i}{N+\frac{1}{2}}})$, 
\begin{enumerate}
\item {\bf{(maximum estimation)}} by using Lemma~\ref{uppbd1} and \ref{uppbdW}, we show that in order to study the asymptotic expansion formula of $J_{N}(W_{0,1,1,0}(4_1),e^{\frac{2\pi i}{N+\frac{1}{2}}})$, we only need to consider a small neighborhood around the critical point;
\item {\bf{(Poisson Summation formula and the saddle point approximation)}} by using the Poisson Summation formula and the saddle point approximation (Proposition 4.6 and Proposition 3.5 in \cite{O52}), we obtain the asymptotic expansion formula for $J_{N}(W_{0,1,1,0}(4_1),e^{\frac{2\pi i}{N+\frac{1}{2}}})$.
\end{enumerate}
\item
To prove the volume conjecture for $TV_r (\SS^3 \backslash W_{0,1,1,0}(4_1))$, 
\begin{enumerate}
\item by using Lemma~\ref{uppbd1} and \ref{uppbdW} as well as Theorem~\ref{relationship}, we can show that 
$$ \limsup_{\substack{ r\to \infty \\r \text{ odd}}} \frac{2\pi}{r} \log|TV_r (\SS^3 \backslash W_{0,1,1,0}(4_1))|  
\leq v_3 ||W_{0,1,1,0}(4_1) || $$
\item together with the result that
$$ \lim_{N \to \infty} \frac{2\pi}{N+\frac{1}{2}}\log\lt|J_{N} \lt( W_{0,1,1,0}(4_1) , e^{\frac{2\pi i}{N+\frac{1}{2}}} \rt) \rt| = v_3 ||W_{0,1,1,0}(4_1) || ,$$
by Theorem~\ref{relationship} and the squeeze theorem, we have
$$ \lim_{\substack{ r\to \infty \\r \text{ odd}}} \frac{2\pi}{r} \log|TV_r (\SS^3 \backslash W_{0,1,1,0}(4_1))|  
= v_3 ||W_{0,1,1,0}(4_1) || $$
\end{enumerate}
\end{enumerate}

\subsection{Volume conjectures for $J_{\vec{N}} (W_{a,1,c,d}(4_1), e^{\frac{2\pi i}{N+\frac{1}{2}}})$}\label{Wa1cd41pf}

By the same method discussed in Section~\ref{W021041}, the $\vec{N}$-th colored Jones polynomials for $W_{a,1,c,d}(4_1)$ is given by \cite{HZ07,V08}

\begin{align}
&J_{\vec{N}} (W_{a,1,c,d}(4_1), t) \notag\\
&= \frac{t^{\frac{N^2-1}{2} + \frac{N(N-1)}{2}}}{1-t}
\sum_{n=0}^{N-1} \sum_{l_1=0}^{M_2-1-n}\dots \sum_{l_c=0}^{M_2-1-n} \sum_{l_1'=0}^{M_2-1-n}\dots \sum_{l_d'=0}^{M_2-1-n} \sum_{k=0}^{2n}
t^{an(n+1) - n - k} \notag\\
&\prod_{\gamma=1}^c \lt(t^{-N(l_\gamma+n) }\frac{(t)_{N-l_\gamma-1}(t)_{l_\gamma+n}}{(t)_n(t)_{N-l_\gamma-n-1}(t)_l}\rt) 
\prod_{\gamma=1}^d \lt(t^{N(l'_\gamma+n) }\frac{(t^{-1})_{N-l'_\gamma-1}(t^{-1})_{l'_\gamma+n}}{(t^{-1})_n(t^{-1})_{N-l'_\gamma-n-1}(t^{-1})_l'}\rt) 
\lt(t^{-2nk} \frac{(t)_{2n+1+k}}{(t)_{2n-k}}  \rt)
\end{align}

Let $D_\eta$ to be the set
\begin{align*}
D_{\eta} = \lt\{(n,l_1,\dots, l_{c}, l'_1, \dots, l_{d}', k) \mid \lt|\frac{n}{N+\frac{1}{2}}-\frac{1}{2}\rt|, \lt|\frac{l_1}{N+\frac{1}{2}} - \frac{1}{4}\rt| ,\dots, \lt|\frac{l_{d}'}{N+\frac{1}{2}} - \frac{1}{4}\rt|, \lt|\frac{k}{N+\frac{1}{2}} - \frac{5}{6}\rt| < \eta \rt\}
\end{align*}
Similar to the arguments in Section~\ref{W021041}, we have
\begin{align}
&J_{\vec{N}} (W_{a,1,c,d}(4_1), e^{\frac{2\pi i}{N+\frac{1}{2}}}) \notag\\
&\stackrel[N \to \infty]{\sim}{} \frac{-\lt(N+\frac{1}{2}\rt)(-1)^{a } e^{\frac{2\pi i}{N+\frac{1}{2}}\lt(\frac{N^2-1}{2}+\frac{(c-d)N(N-1)}{2}\rt)}  e^{- \frac{a}{4}(2\pi i)(N+\frac{1}{2})} }{\pi} \notag\\
&\qquad \times \exp\lt(  -\frac{r}{4\pi i} \varphi_r \lt(  \frac{\pi}{2N+1}\rt)\rt)^{c}
\exp\lt(  \frac{r}{4\pi i} \varphi_r \lt(\pi  - \frac{\pi}{2N+1}\rt)\rt)^{d} 
\sum_{D_\eta}  
e^{\frac{2\pi i \lt(-1+a+\frac{c-d}{2}\rt)n+\sum_{\gamma=1}^c \frac{l_\gamma}{2} - \sum_{\gamma=1}^d\frac{l_\gamma}{2}-k}{N+\frac{1}{2}}} \notag \\
&\qquad\times \exp\lt( \lt(N+\frac{1}{2}\rt)\Phi_{N} \lt(W_{a,1,c,d}(4_1);\frac{n}{N+\frac{1}{2}},\frac{l_1}{N+\frac{1}{2}}, \dots, \frac{l_c}{N+\frac{1}{2}}, \frac{l_1'}{N+\frac{1}{2}} ,\dots, \frac{l_d'}{N+\frac{1}{2}} , \frac{k}{N+\frac{1}{2}}\rt) \rt) 
\end{align}
where
\begin{align*}
&\Phi_{N} \lt(W_{a,1,c,d}(4_1); z_1, z_2,\dots,z_{c+1}, z_{c+2},\dots, z_{c+d+1}, z_{c+d+2}\rt) \\
&= \Phi_{N} (W_{a,1,c,d};z_1, z_2,\dots,z_{c+1}, z_{c+2},\dots, z_{c+d+1}) + \Psi_N(4_1;z_1,z_{c+d+2})
\end{align*}
with
\begin{align*}
&\Phi_{N} (W_{a,1,c,d};z_1, z_2,\dots,z_{c+1}, z_{c+2},\dots, z_{c+d+1}) \\
&=  \frac{a}{2\pi i} \lt[ 2\pi i \lt(z_1-\frac{1}{2}  \rt) \rt]^2 
 + \sum_{i=\gamma}^c \psi_{N,N}(z_1,z_{\gamma+1}) + \sum_{\gamma=1}^d \kappa_{N,N}(z_1,z_{c+\gamma+1})
\end{align*}
\begin{align*}
\psi_{N}(z_1,z_{\gamma+1})
&= \frac{1}{2\pi i} \lt[\varphi_r \lt( \frac{N\pi}{N+\frac{1}{2}} - \pi z_1 - \pi z_{\gamma+1} -\frac{\pi}{2N+1}\rt) \rt.  
  \notag \\
&\qquad - \varphi_r\lt( \frac{N\pi}{N+\frac{1}{2}}  - \pi z_{\gamma+1}- \frac{\pi}{2N+1}\rt) + \varphi_r \lt(\pi z_{\gamma+1} + \frac{\pi}{2N+1}\rt) \notag\\
&\lt.\qquad  - \varphi_r\lt( \pi z_1 + \pi z_{\gamma+1} + \frac{\pi}{2N+1}\rt)  + \varphi_r\lt( \pi z_1 + \frac{\pi}{2N+1}\rt) \rt] \qquad \text{for $\gamma=1,\dots, c$}\\
\kappa_{N} (z_1,z_{c+i+1})
&= \frac{-1}{2\pi i} \lt[\varphi_r \lt( -\pi  \lt(\frac{N}{N+\frac{1}{2}} -1\rt) + \pi z_1 + \pi z_{c+i+1} + \frac{\pi}{2N+1}\rt) \rt.  \notag\\
&\qquad  - \varphi_r\lt( -\pi  \lt(\frac{N}{N+\frac{1}{2}} -1\rt) + \pi z_{c+i+1} + \frac{\pi}{2N+1}\rt) \notag \\
&\qquad  + \varphi_r \lt( \pi -\pi z_{c+i+1} - \frac{\pi}{2N+1}\rt)  - \varphi_r\lt( \pi - \pi z_1 - \pi z_{c+i+1} - \frac{\pi}{2N+1}\rt) \notag\\
&\lt.\qquad  + \varphi_r\lt( \pi - \pi z_1 - \frac{\pi}{2N+1}\rt) \rt] 
\qquad\qquad\qquad\qquad\qquad\qquad\qquad \text{for $\gamma=1,\dots, d$}\\
\Psi_N\lt(4_1; z_1,z_{c+d+2} \rt) 
&= \frac{1}{2\pi i} \lt[ \varphi_r \lt( - \pi z_{c+d+2} + 2\pi z_1 + \frac{\pi}{2N+1} \rt) \rt. \notag\\
&\qquad \lt.- \varphi_r\lt( \pi z_{c+d+2} - \pi + 2\pi z_1 + \frac{3\pi}{2N+1} \rt) \rt] - 4\pi i z_1z_{c+d+2}
\end{align*}

Take $N \to \infty$, we have
\begin{align*}
&\Phi \lt(W_{a,1,c,d}(4_1); z_1, z_2,\dots,z_{c+1}, z_{c+2},\dots, z_{c+d+1}, z_{c+d+2}\rt) \\
&= \Phi (W_{a,1,c,d};z_1, z_2,\dots,z_{c+1}, z_{c+2},\dots, z_{c+d+1}) + \Psi(4_1;z_1,z_{c+d+2})
\end{align*}
with
\begin{align*}
&\Phi (W_{a,1,c,d};z_1, z_2,\dots,z_{c+1}, z_{c+2},\dots, z_{c+d+1}) \\
&=  \frac{a}{2\pi i} \lt[ 2\pi i \lt(z_1-\frac{1}{2}  \rt) \rt]^2  
 + \sum_{\gamma=1}^c \psi(z_1,z_{\gamma+1}) + \sum_{\gamma=1}^d \kappa(z_1,z_{c+\gamma+1})
\end{align*}
\begin{align*}
\psi_{i} (z_1, z_{\gamma+1})
&= \frac{1}{2\pi i} \lt[   
\Li\lt(e^{- 2\pi i z_1 - 2\pi i z_{\gamma+1}}\rt) - \Li\lt( e^{- 2\pi i z_{\gamma+1} }\rt) \rt.\\
&\qquad \lt. + \Li \lt(e^{2\pi i z_{\gamma+1}}\rt) - \Li\lt(e^{2\pi iz_1+ 2\pi i z_{\gamma+1}}\rt) + \Li\lt(e^{2\pi i z_1}\rt) \rt] \\
& \qquad\qquad \text{for $\gamma=1,\dots, c,$}\\
\kappa_i(z_1,z_{c+\gamma+1}) 
&= \frac{1}{2\pi i} [ - \Li\lt(e^{2\pi i z_1 + 2\pi i z_{c+\gamma+1}}\rt) + \Li\lt( e^{ 2\pi i z_{c+\gamma+1} }\rt) \\
&\qquad - \Li \lt(e^{-2\pi i z_{c+\gamma+1}}\rt) + \Li\lt(e^{-2\pi iz_1- 2\pi i z_{c+\gamma+1}}\rt) - \Li\lt(e^{-2\pi i z_1}\rt) ]\\
& \qquad\qquad \text{for $\gamma=1,\dots, d,$}\\
\Psi\lt(4_1; z_1,z_{c+d+2} \rt) 
&= \frac{1}{2\pi i}\lt[ \Li(e^{-2\pi i z_{c+d+2} + 4 \pi i z_1}) - \Li(e^{2\pi i z_{c+d+2} + 4 \pi i z_1}) \rt] - 4\pi i z_1z_{c+d+2}
\end{align*}

In the above, the functions are well-defined on
\begin{align*}
D = \lt\{(z_1,z_2,\dots, z_{c+1}, z_{c+2}, \dots, z_{c+d+1}, z_{c+d+2}) \mid \lt|z_1 - \frac{1}{2}\rt|, \lt|z_2 - \frac{1}{4}\rt| ,\dots, \lt|z_{c+d+1} - \frac{1}{4}\rt|, \lt|z_{c+d+2} - \frac{5}{6}\rt| < \eta \rt\}
\end{align*}
for some sufficiently small $\eta>0$.

By (\ref{QtoC}) and considering the Talyor series expansion (see Lemma 2 in \cite{W19} for a similar computation), we have
\begin{align*}
&\exp\lt( \lt(N+\frac{1}{2}\rt) \Phi_{N} (W_{a,1,c,d}(4_1);z_1, z_2,\dots,z_{c+1}, z_{c+2},\dots, z_{c+d+1}) \rt)\\
&= E(W_{a,1,c,d}(4_1);z_1, z_2,\dots,z_{c+1}, z_{c+2},\dots, z_{c+d+1}) \\
&\qquad \exp\lt( \lt(N+\frac{1}{2}\rt) \Phi (W_{a,1,c,d}(4_1) ;z_1, z_2,\dots,z_{c+1}, z_{c+2},\dots, z_{c+d+1}) \rt)\lt(1+O\lt(\frac{1}{N+\frac{1}{2}}\rt)\rt),
\end{align*}
where
\begin{align*}
&E(W_{a,1,c,d}(4_1);z_1, z_2,\dots,z_{c+1}, z_{c+2},\dots, z_{c+d+1}) \\
&= \exp\lt( \sum_{\gamma=1}^c \lt(\log(1-e^{-2\pi i z_1 - 2\pi i z_{\gamma+1}}) - \log(1-e^{-2\pi i z_{\gamma+1}}) - \frac{1}{2}\log(1-e^{2\pi i z_{\gamma+1}}) \rt.\rt.\\
&\qquad\qquad\qquad \lt. + \frac{1}{2}\log(1-e^{2\pi i z_1 + 2\pi i z_{\gamma+1}}) - \frac{1}{2}\log(1-e^{2\pi i z_1}) \rt) \\
&\qquad\quad + \sum_{\gamma=1}^d \lt(\log(1-e^{2\pi i z_1 + 2\pi i z_{c+\gamma+1}}) - \log(1-e^{2\pi i z_{c+\gamma+1}}) - \frac{1}{2}\log(1-e^{-2\pi i z_{c+\gamma+1}}) \rt.\\
&\qquad\qquad\qquad \lt. + \frac{1}{2}\log(1-e^{-2\pi i z_1 - 2\pi i z_{c+\gamma+1}}) - \frac{1}{2}\log(1-e^{-2\pi i z_1}) \rt) \\
&\qquad\qquad \lt. - \frac{1}{2}\log(1-e^{-2\pi i z_3+4\pi i z_1}) 
+ \frac{3}{2}\log(1-e^{2\pi i z_3+4\pi i z_1}) \rt)
\end{align*}

Note that the critical point equations for $\Phi (W_{a,1,c,d}(4_1);z_1, z_2,\dots,z_{c+1}, z_{c+2},\dots, z_{c+d+1})$ are given by

\begin{empheq}[left = \empheqlbrace]{align}
0 &= 2a\lt(z_1-\frac{1}{2}\rt) + \sum_{\gamma=1}^c \frac{\partial \psi(z_1,z_{\gamma+1})}{\partial z_1} + \sum_{\gamma=1}^d \frac{\partial \kappa(z_1,z_{\gamma+1})}{\partial z_1} + \frac{\partial \Psi(4_1;z_1,z_3)}{\partial z_1} \\
0 &=  \frac{\partial \psi(z_1,z_{\gamma+1})}{\partial z_{\gamma+1}} \qquad \hspace{9pt} \text{for $\gamma=1,\dots, c$} \\
0 &=  \frac{\partial \kappa(z_1,z_{c+\gamma+1})}{\partial z_{c+\gamma+1}} \qquad \text{for $\gamma=1,\dots, d$} \\
0 &= \frac{\partial \Psi(4_1;z_1,z_{c+d+2})}{\partial z_{c+d+2}}
\end{empheq}

Using the same arguments in Section~\ref{W021041}, it is straightforward to verify that 
$$(z_1,z_2,\dots,z_c, z_{c+1},\dots, z_{c+d+2}) = \lt(\frac{1}{2},\frac{1}{4},\dots, \frac{1}{4} , \frac{5}{6}\rt)$$
is the solution of critical point of the Fourier coefficient
$$ \Phi \lt(W_{a,1,c,d}(4_1); z_1, z_2,\dots,z_{c+1}, z_{c+2},\dots, z_{c+d+1}, z_{c+d+2}\rt) + 2\pi i z_1 + 2\pi i z_{c+d+2}$$
with critical value \cite{W19}
\begin{align*}
\Phi\lt(W_{a,1,c,d}(4_1);\frac{1}{2},\frac{1}{4},\dots, \frac{1}{4}, \frac{5}{6}\rt)
&= \frac{1}{2\pi i} \sum_{\gamma=1}^c \psi_i \lt(\frac{1}{2},\frac{1}{4}\rt) + \frac{1}{2\pi i} \sum_{\gamma=1}^d \kappa_i \lt(\frac{1}{2},\frac{1}{4} \rt) + \Psi\lt(4_1; \frac{1}{2},\frac{5}{6} \rt) + \frac{11\pi i}{6} \\
&= \frac{1}{2\pi} \lt[\Vol(\SS^3\backslash W_{a,1,c,d}) + \Vol(\SS^3 \backslash 4_1) + (c-d)\frac{\pi^2 i}{12} \rt] + \frac{11\pi i}{6} \\
&= \frac{1}{2\pi} \lt[ v_3 || \SS^3\backslash W_{a,1,c,d}(4_1) || + (c-d)\frac{\pi^2 i}{12} \rt] + \frac{11\pi i}{6}
\end{align*}

Moreover, by direct computation, the Hessian of the potential function evaluated at the critical point is a $(c+d+2)\times (c+d+2)$ matrix given by
$$
\Hess\lt(\Phi\lt(W_{a,1,c,d}(4_1);\frac{1}{2},\frac{1}{4},\dots, \frac{1}{4}, \frac{5}{6}\rt)\rt)
=
2\pi i
\begin{pmatrix}
(c+d)i-\frac{c-d}{2} + 2a & i & i & \dots & i & 2 + 2\sqrt{3}i \\
i& 2i & 0 & \dots & 0 & 0 \\
i& 0 & 2i & \dots & 0 & 0 \\
\vdots & \vdots & \vdots & \vdots &\vdots & \vdots \\
i& 0 & 0 & \dots & 2i & 0  \\
2+ 2\sqrt{3}i & 0 & 0 & \dots & 0 & \sqrt{3}i
\end{pmatrix}
$$

For any $a_1,a_2,a_3,a_4, a_5\in\CC$, one can show that the determinant of the $(c+d+2)\times (c+d+2)$ matrix
$$
\begin{pmatrix}
a_1&a_2&a_2&a_2&\dots&a_2&a_3 \\
a_2&a_4&0&0&\dots&0&0 \\
a_2&0&a_4&0&\dots&0&0 \\
&&&\vdots&& \\
a_2&0&0&0&\dots&a_4&0 \\
a_3&0&0&0&\dots&0&a_5
\end{pmatrix}
$$
is given by
$$
\begin{vmatrix}
a_1&a_2&a_2&a_2&\dots&a_2&a_3 \\
a_2&a_4&0&0&\dots&0&0 \\
a_2&0&a_4&0&\dots&0&0 \\
&&&\vdots&& \\
a_2&0&0&0&\dots&a_4&0 \\
a_3&0&0&0&\dots&0&a_5
\end{vmatrix}
=
a_4^{n-1}(a_1 a_4 a_5 - na_2^2 a_5 - a_3^2 a_4)
$$

As a result,
\begin{align*}
&\det\Hess\lt(\Phi\lt(W_{a,1,c,d}(4_1);\frac{1}{2},\frac{1}{4},\dots, \frac{1}{4}, \frac{5}{6}\rt)\rt) \\
&=
(2\pi i)^{c+d+2}
\begin{vmatrix}
(c+d)i-\frac{c-d}{2}+2a & i & i & \dots & i & 2 + 2\sqrt{3}i \\
i& 2i & 0 & \dots & 0 & 0 \\
i& 0 & 2i & \dots & 0 & 0 \\
\vdots & \vdots & \vdots & \vdots &\vdots & \vdots \\
i& 0 & 0 & \dots & 2i & 0  \\
2+ 2\sqrt{3}i & 0 & 0 & \dots & 0 & \sqrt{3}i
\end{vmatrix}\\
&= (2\pi i )^{c+d+2} (2i)^{c+d-1} 
\lt[
\lt( (c+d)i - \lt(\frac{c-d}{2}\rt) + 2a\rt) (2i)(\sqrt{3}i) - (c+d)(i)^2(\sqrt{3}i)
- (2+\sqrt{3}i)^2(2i)
\rt]
\\
&\neq 0
\end{align*}

By the argument discussed in Section~\ref{strategy}, we have
\begin{align}
&J_{\vec{N}} (W_{a,1,c,d}(4_1), e^{\frac{2\pi i}{N+\frac{1}{2}}}) \notag\\
&\stackrel[N \to \infty]{\sim}{} \frac{-\lt(N+\frac{1}{2}\rt)(-1)^{a } e^{\frac{2\pi i}{N+\frac{1}{2}}\lt(\frac{N^2-1}{2}+\frac{(c-d)N(N-1)}{2}\rt)}  e^{- \frac{a}{4}(2\pi i)(N+\frac{1}{2})} }{\pi} \notag\\
&\qquad \times \exp\lt(  -\frac{r}{4\pi i} \varphi_r \lt(  \frac{\pi}{2N+1}\rt)\rt)^{c}
\exp\lt(  \frac{r}{4\pi i} \varphi_r \lt(\pi  - \frac{\pi}{2N+1}\rt)\rt)^{d} 
\sum_{D_\delta}  e^{\frac{2\pi i \lt(-1+a+\frac{c-d}{2}\rt)n+\sum_{\gamma=1}^c \frac{l_\gamma}{2} - \sum_{\gamma=1}^d\frac{l_\gamma}{2}-k}{N+\frac{1}{2}}}  \notag \\
&\qquad\times \exp\lt( \lt(N+\frac{1}{2}\rt)\Phi_{N} \lt(W_{a,1,c,d}(4_1);\frac{n}{N+\frac{1}{2}},\frac{l_1}{N+\frac{1}{2}}, \dots, \frac{l_c}{N+\frac{1}{2}}, \frac{l_1'}{N+\frac{1}{2}} ,\dots, \frac{l_d'}{N+\frac{1}{2}} , \frac{k}{N+\frac{1}{2}}\rt) \rt) \notag\\
&\stackrel[N \to \infty]{\sim}{} \frac{-\lt(N+\frac{1}{2}\rt)(-1)^{a } e^{\frac{2\pi i}{N+\frac{1}{2}}\lt(\frac{N^2-1}{2}+\frac{(c-d)N(N-1)}{2}\rt)}  e^{- \frac{a}{4}(2\pi i)(N+\frac{1}{2})} e^{\pi i \lt(-\frac{8}{3}+a+\frac{3(c-d)}{4}\rt)}}{\pi} \notag\\
&\qquad \times \exp\lt(  -\frac{r}{4\pi i} \varphi_r \lt(  \frac{\pi}{2N+1}\rt)\rt)^{c}
\exp\lt(  \frac{r}{4\pi i} \varphi_r \lt(\pi  - \frac{\pi}{2N+1}\rt)\rt)^{d} 
\times  \lt(N+\frac{1}{2}\rt)^{\frac{c+d+2}{2}}\notag \\
&\qquad\times \frac{(2\pi)^{\frac{c+d+2}{2}}E(W_{a,1,c,d}(4_1);\frac{1}{2},\frac{1}{4},\dots,\frac{1}{4},\frac{5}{6})}{\sqrt{-\det\Hess(\Phi(W_{a,1,c,d}(4_1);\frac{1}{2},\frac{1}{4},\dots,\frac{1}{4},\frac{5}{6}))}}
\exp \lt(
\frac{1}{2\pi} \lt( v_3 || \SS^3\backslash W_{a,1,c,d}(4_1) || + (c-d)\frac{\pi^2 i}{12} \rt) + \frac{11\pi i}{6}
\rt)
\end{align}

By Lemma A.3 in \cite{O52}, we have
\begin{align}\label{QDFcoe}
\exp\lt(-\frac{r}{4\pi i}\varphi_r \lt( \frac{\pi}{2N+1} \rt) \rt) 
&\stackrel[N \to \infty]{\sim}{ } e^{-\frac{\pi i}{4}}\lt(N+\frac{1}{2}\rt)^{-\frac{1}{2}}\exp\lt( \frac{N+\frac{1}{2}}{2\pi} \frac{\pi^2 i}{6} \rt) \\
\exp\lt(\frac{r}{4\pi i}\varphi_r \lt(\pi - \frac{\pi}{2N+1} \rt) \rt) 
&\stackrel[N \to \infty]{\sim}{ } e^{\frac{\pi i}{4}}\lt(N+\frac{1}{2}\rt)^{-\frac{1}{2}}\exp\lt( \frac{N+\frac{1}{2}}{2\pi} \lt(\frac{-\pi^2 i}{6}\rt) \rt)
\end{align}

In particular, they do not affect the exponential growth rate of the colored Jones polynomials. This completes the proof of Theorem~\ref{mainthm1}. To prove Corollary~\ref{cablingtv1} for $W_{a,b,c,d}(4_1)$, note that by Theorem~\ref{relationship}, we have
\begin{align*}
TV_{r}\left(\SS^3 \backslash W_{a,b,c,d}(4_1), e^{\frac{2\pi i}{r}}\right) 
&= 2^{n-1}\lt( \frac{2\sin(\frac{2\pi}{r})}{\sqrt{r}}\rt)^{2} \sum_{1\leq \vec{M} \leq \frac{r-1}{2}}\left|J_{\vec{M}}\left(W_{a,b,c,d}(4_1),e^{\frac{2\pi i }{N+\frac{1}{2}}}\right)\right|^2 \\
&\geq 2^{n-1}\lt( \frac{2\sin(\frac{2\pi}{r})}{\sqrt{r}}\rt)^{2} 
\left|J_{\vec{N}}\left(W_{a,1,c,d}(4_1),e^{\frac{2\pi i }{N+\frac{1}{2}}}\right)\right|^2
\end{align*}
Corollary~\ref{cablingtv1} then follows from the arguments discussed in Section~\ref{strategy}.

\section{Iterated Whitehead double of the figure eight knot}\label{Wp0110W021041}

In this section, we are going to compute the $N$-th colored Jones polynomials of the family of links $W_{0,1,1,0}^{p+1}(4_1)$, where $p\geq 0$. Again, by equation (2.9) in Section 2 of \cite{HZ07}, by induction we have
\begin{align}
J_{N} (W_{0,1,1,0}^{p+1} (4_1),t) 
&= \sum_{n_1=0}^{N-1} C(n_1,t;N) \cdot J_{N,2n_1+1}(W_{0,1,1,0}^{p}(4_1),t) \notag \\
&= \sum_{n_1=0}^{N-1}\sum_{n_2=0}^{2n_1+1}\dots\sum_{n_{p+1}=0}^{2n_p+1}  
C(n_1,t;N)  \prod_{\gamma=1}^{p} C(n_{\gamma+1},t;2n_\gamma+1) \cdot J_{2n_{p+1}+1}(4_1) \notag\\
&= \frac{t^{\frac{N^2-1}{2} + \frac{N(N-1)}{2}}}{1-t}
\sum_{n_1=0}^{N-1}\sum_{n_2=0}^{2n_1+1}\dots\sum_{n_{p+1}=0}^{2n_p+1} 
\sum_{l_1=0}^{N-1-n_1}\sum_{l_2=0}^{2n_1-n_2}\sum_{l_3=0}^{2n_2-n_3}\dots\sum_{l_{p+1}=0}^{2n_p-n_{p+1}}\sum_{k=0}^{2n_{p+1}} \notag\\
&\qquad t^{- \frac{n_1}{2} + \frac{l_1}{2} - k + \sum_{\gamma=1}^{p} (3n_\gamma - (l_{\gamma+1}+n_{\gamma+1}))}
\lt(t^{-N(l_1+n_1) }\frac{(t)_{N-l_1-1}(t)_{l_1+n_1}}{(t)_{n_1}(t)_{N-l_1-n_1-1}(t)_{l_1}}\rt) \notag\\
&\qquad \prod_{\gamma=1}^{p} 
\lt(t^{(2n_\gamma)^2} \cdot t^{-(2n_{\gamma})(l_{\gamma+1}+n_{\gamma+1}) }\frac{(t)_{2n_{\gamma}-l_{\gamma+1}}(t)_{l_{\gamma+1}+n_{\gamma+1}}}{(t)_{n_{\gamma+1}}(t)_{2n_{\gamma}-l_{\gamma+1}-n_{\gamma+1}}(t)_{l_{\gamma+1}}}\rt)\notag\\
&\qquad\lt(t^{- 2n_{p+1}k} \frac{(t)_{2n_{p+1}+1+k}}{(t)_{n_{p+1}-k}}  \rt)
\end{align}

By Lemma~\ref{uppbd1} and \ref{uppbdW}, when $t=e^{\frac{2\pi i}{N+\frac{1}{2}}}$, we know that
\begin{align*}
\lim_{N \to \infty} \frac{2\pi}{N+\frac{1}{2}} \log| J_{N,} (W_{0,1,1,0}^{p+1}(4_1),t) | \leq v_3 || \SS^3 \backslash W^{p+1}_{0,1,1,0}(4_1) ||
\end{align*}
Furthermore, by considering the term
$$ \frac{(t)_{N-l_1-1}(t)_{l_1+n_1}}{(t)_{n_1}(t)_{N-l_1-n_1-1}(t)_{l_1}}, $$
to compute the asymptotic expansion formula, it suffices to consider those $n_1$ and $l_1$ with
$$ \frac{n_1}{N+\frac{1}{2}} \sim \frac{1}{2} \quad \text{and} \quad \frac{l_1}{N+\frac{1}{2}} \sim \frac{1}{4}$$
In this case, we have $2n_1 \sim N$. By considering the term
$$ \frac{(t)_{2n_{1}-l_{2}}(t)_{l_{2}+n_{2}}}{(t)_{n_{2}}(t)_{2n_{1}-l_{2}-n_{2}}(t)_{l_{2}}} $$
to compute the asymptotic expansion formula, it suffices to consider those $n_2$ and $l_2$ with
$$ \frac{n_2}{N+\frac{1}{2}} \sim \frac{1}{2} \quad \text{and} \quad \frac{l_2}{N+\frac{1}{2}} \sim \frac{1}{4}$$
Inductively, for $\gamma=3,\dots, p+1$, it suffices to consider
$$ \frac{n_\gamma}{N+\frac{1}{2}} \sim \frac{1}{2} \quad \text{and} \quad \frac{l_\gamma}{N+\frac{1}{2}} \sim \frac{1}{4}$$
In particular, for $2n_{p+1} \sim N$, by considering the term 
$$\lt(t^{-k (2n_{p+1}+1)} \prod_{l=1}^{k}\left(1-t^{2n_{p+1}+1-l}\right)\left(1-t^{2n_{p+1}+1+l}\right)\rt),$$
it suffices to consider
$$ \frac{k}{N+\frac{1}{2}} \sim \frac{5}{6}$$
Next, define
\begin{align}
&\Phi_{N}\lt(W_{0,1,1,0}^{p+1}(4_1); z_1, z_2, \dots, z_{2p+2}, z_{2p+3}\rt) \notag\\
&= \Phi_{N} (WL;z_1,z_2) + \sum_{\gamma=1}^p \Psi_{N}(WL;2z_{2\gamma-1}, z_{2\gamma+1},z_{2\gamma+2}) +  \Psi(4_1; 2z_{2p+1},z_{2p+3})
\end{align}
where the functions $\Phi_{N}(WL,z_1,z_2)$ and $\Psi_N(4_1; 2z_{2p+1},z_{2p+3})$ are the functions defined before with formulas
\begin{align}
\Phi_{N} (WL;z_1,z_2) 
&= \frac{1}{2\pi i} 
\lt[\varphi_r \lt( \frac{N\pi}{N+\frac{1}{2}}  - \pi z_1 - \pi z_2 -\frac{\pi}{2N+1}\rt) \rt.  \notag\\
&\qquad  - \varphi_r\lt(  \frac{N\pi}{N+\frac{1}{2}} - \pi z_2- \frac{\pi}{2N+1}\rt) \notag \\
&\qquad  + \varphi_r \lt(\pi z_2 + \frac{\pi}{2N+1}\rt)  - \varphi_r\lt( \pi z_1 + \pi z_2 + \frac{\pi}{2N+1}\rt) \notag\\
&\lt.\qquad  + \varphi_r\lt( \pi z_1 + \frac{\pi}{2N+1}\rt) \rt] \\
\Psi_N\lt(4_1; 2z_{2p+1},z_{2p+3} \rt) 
&= \frac{1}{2\pi i} \lt[ \varphi_r \lt( - \pi z_{2p+3} + 2\pi z_{2p+1} + \frac{\pi}{2N+1} \rt) \rt. \notag\\
&\qquad \lt.- \varphi_r\lt( \pi z_{2p+3} - \pi + 2\pi z_{2p+1} + \frac{3\pi}{2N+1} \rt) \rt]- 4\pi i z_{2p+1} z_{2p+3}
\end{align}
and the function $\Psi_{N}(WL;2z_{2\gamma-1}, z_{2\gamma+1},z_{2\gamma+2})$ is defined by
\begin{align}
\Psi_{N}(WL;2z_{2\gamma-1}, z_{2\gamma+1},z_{2\gamma+2})
&= 8\pi i z_{2\gamma-1}^2 + \frac{1}{2\pi i} \lt\{ -(2\pi i)\lt(2z_{2\gamma-1} - 1\rt)(2\pi i)\lt(z_{2\gamma+1}+z_{2\gamma+2} \rt)\rt. \notag \\
&\qquad + \varphi_r \lt( 2z_{2\gamma-1}\pi  - \pi z_{2\gamma+1} - \pi z_{2\gamma+2} + \frac{\pi}{2N+1}\rt)   \notag\\
&\qquad  - \varphi_r\lt(  2z_{2\gamma-1}\pi - \pi z_{2\gamma+2} + \frac{\pi}{2N+1}\rt) \notag \\
&\qquad  + \varphi_r \lt(\pi z_{2\gamma+2} + \frac{\pi}{2N+1}\rt)  - \varphi_r\lt( \pi z_{2\gamma+1} + \pi z_{2\gamma+2} + \frac{\pi}{2N+1}\rt) \notag\\
&\lt.\qquad  + \varphi_r\lt( \pi z_{2\gamma+1} + \frac{\pi}{2N+1}\rt)  \rt\} 
\end{align}

Take $N\to \infty$, we have
\begin{align}
&\Phi\lt(W_{0,1,1,0}^{p+1}(4_1); z_1, z_2, \dots, z_{2p+2}, z_{2p+3}\rt)  \notag \\
&= \Phi (WL;z_1,z_2) + \sum_{\gamma=1}^p \Psi(WL;2z_{2\gamma-1}, z_{2\gamma+1},z_{2\gamma+2})  +  \Psi(4_1;z_{2p+1},z_{2p+3})
\end{align}
where the functions $\Phi(WL,z_1,z_2)$ and $\Phi(4_1; 2z_{2p+1},z_{2p+3})$ are the functions defined before with formulas
\begin{align}
\Phi(WL;z_1,z_2) 
&= \frac{1}{2\pi i} \lt[ \Li\lt(e^{- 2\pi i z_1 - 2\pi i z_2}\rt) - \Li\lt( e^{ - 2\pi i z_2 }\rt) + \Li \lt(e^{2\pi i z_2}\rt) \rt. \notag\\
&\qquad \lt. - \Li\lt(e^{2\pi iz_1+ 2\pi i z_2}\rt)
 + \Li\lt(e^{2\pi i z_1}\rt)  \rt] \\
\Psi\lt(4_1; 2z_{2p+1},z_{2p+3} \rt) 
&= \frac{1}{2\pi i}\lt[ \Li(e^{-2\pi i z_{2p+3} + 4 \pi i z_{2p+1}}) - \Li(e^{2\pi i z_{2p+3} + 4 \pi i z_{2p+1}}) \rt] \notag\\
&\qquad - 4\pi i z_{2p+1} z_{2p+3}
\end{align}
and the function $\Psi (WL; 2z_{2\gamma - 1},z_{2\gamma + 1},z_{2\gamma + 2})$ is given by
\begin{align}
\Psi (WL; 2z_{2\gamma - 1},z_{2\gamma + 1},z_{2\gamma + 2})
&= 8\pi i z_{2\gamma-1}^2 + \frac{1}{2\pi i} \lt[  - (2\pi i (2z_{2\gamma - 1}-1))(2\pi i z_{2\gamma + 1} + 2\pi i z_{2\gamma + 2})  \rt. \notag\\
&\qquad + \Li\lt(e^{2\pi i (2z_{2\gamma - 1}-1) - 2\pi i z_{2\gamma + 1} - 2\pi i z_{2\gamma + 2}}\rt) - \Li\lt( e^{2\pi i (2z_{2\gamma - 1}-1) - 2\pi i z_{2\gamma + 2} }\rt) \notag\\
&\qquad \lt.+ \Li \lt(e^{2\pi i z_{2\gamma + 2}}\rt) - \Li\lt(e^{2\pi iz_{2\gamma + 1}+ 2\pi i z_{2\gamma + 2}}\rt) + \Li\lt(e^{2\pi i z_{2\gamma + 1}}\rt)  \rt]\notag\\
\end{align}
Here the function $\Phi\lt(W_{0,1,1,0}^{p+1}(4_1); z_1, z_2, \dots, z_{2p+2}, z_{2p+3}\rt) $ is defined on the domain
$$ D = \lt\{(z_1,z_2,\dots,z_{2p+1},z_{2p+2},z_{2p+3}) \mid |z_{2\gamma-1}-\frac{1}{2}|, |z_{2\gamma} - \frac{1}{4}|, |z_{2p+3} - \frac{5}{6}| < \eta  \text{,where } \gamma=1,\dots, p+1\rt\}$$
for some sufficiently small $\eta>0$. 
Also, note that the function $\Psi(WL;2z_{2\gamma-1}, z_{2\gamma+1},z_{2\gamma+2}) - 8\pi i z_{2\gamma-1}^2$ is the potential function $\Phi^{(1,s_2)}(z_1,z_2)$ in \cite{W19} with $s_2 = 2z_{2\gamma-1}$, $z_1 =  z_{2\gamma+1}$ and $z_2 = z_{2\gamma+2}$.

Let $D_\eta$ to be the collection of all $(n_1,\dots, n_{p+1}, l_1, \dots, l_{p+1}, k)$ such that
$$ \lt( \frac{n_1}{N+\frac{1}{2}}, \dots, \frac{n_{p+1}}{N+\frac{1}{2}} , \frac{l_1}{N+\frac{1}{2}},\dots,\frac{l_{p+1}}{N+\frac{1}{2}}, \frac{k}{N+\frac{1}{2}} \rt) \in D $$ 

By direct computation and the maximum point estimation, we have
\begin{align}
&J_{N, N} (W_{0,1,1,0}^{p+1}(4_1),e^{\frac{2\pi i}{N+\frac{1}{2}}}) \notag\\
&\stackrel[N \to \infty]{\sim}{} - \frac{N+\frac{1}{2}}{\pi} 
\exp\lt(  -\varphi_r \lt(  \frac{\pi}{2N+1}\rt)\rt)^{p+1}
\sum_{D_\delta }e^{\frac{2\pi i}{N+\frac{1}{2}}\lt( -\frac{n_1}{2}+\frac{n_2}{2}- k +\sum_{\gamma=1}^p (3n_\gamma-(l_{\gamma+1}+n_{\gamma+1})) \rt)}
\notag \\
&\qquad \exp\lt( \lt(N+\frac{1}{2}\rt)\Phi_{N}\lt(W_{0,1,1,0}^{p+1}(4_1);\frac{n_1}{N+\frac{1}{2}}, \dots, \frac{n_{p+1}}{N+\frac{1}{2}}, \frac{l_1}{N+\frac{1}{2}}\dots, \frac{l_{p+1}}{N+\frac{1}{2}}, \frac{k}{N+\frac{1}{2}}\rt) \rt)
\end{align}

Besides, by considering the Talyor series expansion, we have
\begin{align*}
&\exp\lt( \lt(N+\frac{1}{2}\rt) \Phi_{N,N}\lt(W_{0,1,1,0}^{p+1}(4_1); z_1, z_2, \dots, z_{2p+3}\rt) \rt)\\
&= E(W_{0,1,1,0}^{p+1}(4_1); z_1, z_2, \dots, z_{2p+3}) \\
&\qquad \exp\lt( \lt(N+\frac{1}{2}\rt) \Phi\lt(W_{0,1,1,0}^{p+1}(4_1); z_1, z_2, \dots, z_{2p+3}\rt) \rt)\lt(1+O\lt(\frac{1}{N+\frac{1}{2}}\rt)\rt),
\end{align*}
where
\begin{align*}
&E(W_{0,1,1,0}^{p+1}(4_1); z_1, z_2, \dots, z_{2p+3}) \\
&= \exp\lt(\log(1-e^{-2\pi i z_1 - 2\pi i z_2}) - \log(1-e^{-2\pi i z_2}) - \frac{1}{2}\log(1-e^{2\pi i z_2}) \rt.\\
&\qquad\qquad + \frac{1}{2}\log(1-e^{2\pi i z_1 + 2\pi i z_2}) - \frac{1}{2}\log(1-e^{2\pi i z_1}) \\
&\qquad + \sum_{\gamma=1}^p \lt(
-\frac{1}{2}\log(1-e^{4\pi iz_{2\gamma - 1}-2\pi i z_{2\gamma+1} - 2\pi i z_{2\gamma+2}})
+ \frac{1}{2}\log(1-e^{4\pi i z_{2\gamma -1} - 2\pi i z_{2\gamma+2}})
\rt. \\
&\qquad\qquad\quad \lt. - \frac{1}{2}\log(1-e^{2\pi i z_{2\gamma+2}}) + \frac{1}{2}\log(1-e^{2\pi i z_{2\gamma+1} + 2\pi i z_{2\gamma+2}}) - \frac{1}{2}\log(1-e^{2\pi i z_{2\gamma+1}}) \rt) \\
&\qquad\qquad \lt. - \frac{3}{2}\log(1-e^{-2\pi i z_{2p+3}+4\pi i z_{2p+1}}) 
- \frac{1}{2}\log(1-e^{2\pi i z_{2p+3}+4\pi i z_{2p+1}}) \rt)
\end{align*}

Note that the critical point equations of the potential function
$$\Phi\lt(W_{0,1,1,0}^{p+1}(4_1); z_1, z_2, \dots, z_{2p+2}, z_{2p+3}\rt)$$
are given by
\begin{empheq}[left = \empheqlbrace]{align}
0 &= \frac{\partial\Phi(WL;z_1,z_2)}{\partial z_1} +  \frac{\partial\Psi(WL;2z_1,z_3,z_4)}{\partial z_1} \\
0 &= \frac{\partial\Phi(WL;z_1,z_2)}{\partial z_2} \\
0 &= \frac{\partial\Psi(WL;2z_{2\gamma-3},z_{2\gamma-1},z_{2\gamma})}{\partial z_{2\gamma -1}} + \frac{\partial\Psi(WL;2z_{2\gamma-1},z_{2\gamma+1},z_{2\gamma+2})}{\partial z_{2\gamma -1}}  \qquad \text{for $\gamma=2,\dots, p$} \\
0 &= \frac{\partial\Psi(WL;2z_{2\gamma-1}, z_{2\gamma+1},z_{2\gamma + 2})}{\partial z_{2\gamma+2}}  
\qquad\qquad\qquad\qquad\qquad\qquad\qquad\quad
 \text{for $\gamma=1,\dots, p$} \\
0 &= \frac{\partial\Psi(WL;2z_{2p-1},z_{2p+1},z_{2p+2})}{\partial z_{2p+1}} 
+ \frac{\partial\Psi(4_1;2z_{2p+1},z_{2p+3})}{\partial z_{2p+1}} \\
0 &= \frac{\partial\Psi(4_1;2z_{2p+1},z_{2p+3})}{\partial z_{2p+3}} 
\end{empheq}
Note that when $z_1=z_3=\frac{1}{2}, z_2=z_4=\frac{1}{4}$, we have
\begin{align*}
\frac{\partial\Phi(WL;z_1,z_2)}{\partial z_1} +  \frac{\partial\Psi(WL;2z_1,z_3,z_4)}{\partial z_1}
&= 16\pi i \lt(\frac{1}{2}\rt)- 2(2\pi i)\lt(\frac{1}{2}+\frac{1}{4}\rt) \notag\\
&\qquad -2\log(1-e^{-2\pi i(3/4)}) + 2\log(1-e^{-2\pi i(1/4)}) \\
&= 6\pi i \\
\frac{\partial\Phi(WL;z_1,z_2)}{\partial z_2} 
&= 0
\end{align*}
Similarly, when $z_{2\gamma-3}=z_{2\gamma-1}=z_{2\gamma+1}=\frac{1}{2}, z_{2\gamma}=z_{2\gamma+2}=\frac{1}{4}$, we have
\begin{alignat*}{2}
\frac{\partial\Psi(WL;2z_{2\gamma-3},z_{2\gamma-1},z_{2\gamma})}{\partial z_{2\gamma -1}} + \frac{\partial\Psi(WL;2z_{2\gamma-1},z_{2\gamma+1},z_{2\gamma+2})}{\partial z_{2\gamma -1}} 
&= 6\pi i &&\qquad\qquad\qquad \text{for $\gamma=2,\dots, p$} \\
\frac{\partial\Psi(WL;2z_{2\gamma-1}, z_{2\gamma+1},z_{2\gamma + 2})}{\partial z_{2\gamma+2}} &= 0  &&\qquad\qquad\qquad \text{for $\gamma=1,\dots, p$}
\end{alignat*}
Finally, when $z_{2p-1}=z_{2p+1}=\frac{1}{2}$, $z_{2p+2}=\frac{1}{4}$, $z_{2p+3}=\frac{5}{6}$, we have
\begin{align*}
\frac{\partial\Psi(WL;2z_{2p-1},z_{2p+1},z_{2p+2})}{\partial z_{2p+1}} 
+ \frac{\partial\Psi(4_1;2z_{2p+1},z_{2p+3})}{\partial z_{2p+1}}
&= \frac{\partial\Psi\lt(4_1;1,\frac{5}{6}\rt)}{\partial z_{2p+1}} 
= -2\pi i\\
\frac{\partial\Psi(4_1;2z_{2p+1},z_{2p+3})}{\partial z_{2p+3}} 
&= 0
\end{align*}
As a result,the point
$$ \lt(\frac{1}{2},\frac{1}{4},\dots, \frac{1}{2},\frac{1}{4},\frac{5}{6} \rt) $$
is a critical point of the Fourier coefficient
$$  \Phi\lt(W_{0,1,1,0}^{p+1}(4_1); z_1, z_2, \dots, z_{2p+2}, z_{2p+3}\rt) - \lt(\sum_{\gamma=1}^{p} 6\pi i z_{2\gamma-1}\rt) + 2\pi i z_{2p+1}$$
Moreover, the critical value is given by
\begin{align*}
&\Phi\lt(W_{0,1,1,0}^{p+1}(4_1); \frac{1}{2},\frac{1}{4},\dots,\frac{1}{2},\frac{1}{4}, \frac{5}{6}\rt) - \lt(\sum_{\gamma=1}^p 3\pi i \rt) + \pi i \\
&= (p+1)\Phi\lt(WL; \frac{1}{2},\frac{1}{4} \rt) + \Phi\lt(4_1; \frac{1}{2},\frac{5}{6} \rt) 
-(3p-1)\pi i\\
&= \frac{1}{2\pi}\lt[(p+1)\Vol(\SS^3\backslash WL) + \Vol(\SS^3\backslash 4_1) + i \frac{(p+1)\pi^2}{4} \rt] - (3p-1)\pi i\\
&= \frac{1}{2\pi}\lt[ v_3 || \SS^3 \backslash W_{0,1,1,0}^{p+1}(4_1) || + i \frac{(p+1)\pi^2}{4}\rt] - (3p-1)\pi i
\end{align*}

The following lemma shows that the critical point is indeed non-degenerate.
\begin{lemma}\label{nonsing1} We have $\det\Hess\lt(\Phi_{N,N}\lt(W_{0,1,1,0}^{p+1}(4_1); \frac{1}{2},\frac{1}{4},\dots,\frac{1}{2},\frac{1}{4},\frac{5}{6} \rt)\rt) \neq 0$.
\end{lemma}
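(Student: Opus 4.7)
The plan is to exploit the chain-like structure of the potential function $\Phi(W_{0,1,1,0}^{p+1}(4_1);\cdot)$: each even-indexed variable $z_2,z_4,\ldots,z_{2p+2}$ appears in exactly one summand, as does $z_{2p+3}$, while the odd-indexed variables $z_1,z_3,\ldots,z_{2p+1}$ are the ``gluing'' parameters shared between two consecutive blocks. This makes the Hessian sparse: the rows and columns indexed by the ``local'' variables have only the entries coming from a single summand of the potential.

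First, I would write out the Hessian as a $(2p+3)\times(2p+3)$ matrix and apply Schur complement reduction to eliminate the variables $z_2,z_4,\ldots,z_{2p+2}$ and $z_{2p+3}$ one at a time. At each such step, the pivot is the pure second partial of a single $\Phi(WL;\cdot)$, $\Psi(WL;\cdot)$ or $\Psi(4_1;\cdot)$ summand, evaluated at the corresponding critical value $(\tfrac{1}{2},\tfrac{1}{4})$ or $(\tfrac{1}{2},\tfrac{5}{6})$. Each such pivot is an explicit non-zero complex number (already appearing in the computation of $\det\Hess(\Phi(W_{0,1,1,0}(4_1);\tfrac12,\tfrac14,\tfrac56))$ in Section~\ref{W021041} and of the Whitehead link Hessian in~\cite{W19}), so the determinant of the full Hessian equals a non-zero scalar times the determinant of the reduced matrix on the odd variables.

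The reduced matrix, indexed by $z_1,z_3,\ldots,z_{2p+1}$, is tridiagonal: the only cross terms come from $\Psi(WL;2z_{2\gamma-1},z_{2\gamma+1},z_{2\gamma+2})$, which after eliminating $z_{2\gamma+2}$ couples only $z_{2\gamma-1}$ and $z_{2\gamma+1}$. Its diagonal and off-diagonal entries are explicit algebraic numbers built from evaluations of $e^{\pm 2\pi i/4}$, $e^{\pm 2\pi i/6}$ and the known derivatives of $\Phi(WL;\cdot)$ at the critical point. I would then compute $\det$ of this tridiagonal matrix by the standard three-term recursion $D_k=a_k D_{k-1}-b_k^2 D_{k-2}$, treating the base case $p=0$ as the already established non-vanishing of $\det\Hess(\Phi(W_{0,1,1,0}(4_1);\tfrac12,\tfrac14,\tfrac56))$.

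The main obstacle is to ensure that the recursion does not accidentally vanish for some $p$. Since all diagonal entries $a_k$ and off-diagonal entries $b_k$ are equal (for $k\geq 2$) to the same pair of explicit complex constants determined by the Whitehead link critical point, the recursion becomes a constant-coefficient linear recurrence. One then verifies by computing its characteristic roots that neither root is a root of unity of order dividing any $p+1$ that would make $D_{p+1}$ vanish; concretely, the two roots of $\lambda^2-a\lambda+b^2=0$ have irrational ratio, so $D_{p+1}\neq 0$ for every $p\geq 0$, completing the proof.
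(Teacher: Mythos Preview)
Your Schur-complement-plus-tridiagonal-recursion outline is a reasonable strategy in principle, but the paper takes a completely different and much cleaner route. The paper observes that the potential function is a sum of blocks $\Phi(WL;\cdot)$, $\Psi(WL;\cdot)$ and $\Psi(4_1;\cdot)$ sharing only the odd variables, and therefore the \emph{real part} of the full Hessian at the critical point decomposes as a sum of quadratic forms in overlapping groups of variables, each governed by the real part of the Hessian of a single block. The paper checks directly (Sylvester's criterion on explicit $2\times2$ and $3\times3$ matrices) that each of these small real Hessians is negative definite, hence so is the full real part, and then invokes the linear-algebra fact from \cite{L81} that a complex symmetric matrix with negative definite real part is automatically nonsingular. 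This argument handles all $p\geq 0$ uniformly with no recursion and no dependence on $p$ beyond the block decomposition.

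By contrast, your approach has a genuine gap at the final step. After reducing to the tridiagonal matrix on the odd variables and setting up the three-term recurrence $D_k=a_kD_{k-1}-b_k^2D_{k-2}$, you assert that because the characteristic roots of the constant-coefficient part have ``irrational ratio'', $D_{p+1}\neq 0$ for every $p$. This is not a valid inference: writing $D_k=c_1\lambda_1^k+c_2\lambda_2^k$, one has $D_k=0$ exactly when $(\lambda_1/\lambda_2)^k=-c_2/c_1$, and an irrational (or even non-root-of-unity) ratio $\lambda_1/\lambda_2$ does not by itself rule out hitting the specific value $-c_2/c_1$ for some $k$. You would also need control on the modulus of $\lambda_1/\lambda_2$ together with the initial data $c_1,c_2$ coming from the boundary blocks $\Phi(WL;\cdot)$ and $\Psi(4_1;\cdot)$, and those boundary contributions make the recurrence not literally constant-coefficient at the endpoints. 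The paper's negative-definiteness argument sidesteps all of this: it requires only that three small explicit real symmetric matrices be negative definite, which is a finite check independent of $p$.
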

\begin{proof}
By direct computation, we have
\begin{align}
\Hess\lt(\Phi\lt(WL; \frac{1}{2}, \frac{1}{4}\rt)\rt)
&= 2\pi i
\begin{pmatrix}
-\frac{1}{2} + i & i \\
i & 2i
\end{pmatrix}\\
\Hess\lt(\Psi\lt(WL; 1, \frac{1}{2}, \frac{1}{4}\rt)\rt)
&= 2\pi i
\begin{pmatrix}
8+2i & -1-i & -2-i \\
-1 -i & -\frac{1}{2} + i & i \\
-2-i & i & 2i
\end{pmatrix} \\
\Hess\lt(\Phi\lt(4_1; \frac{1}{2}, \frac{1}{4}\rt)\rt)
&= 2\pi i
\begin{pmatrix}
4\sqrt{3}i & 2 \\
2 & \sqrt{3}i
\end{pmatrix}
\end{align}
Next, we consider the real parts of the above matrices, denoted by $A_1,A_2$ and $A_3$ respectively:
\begin{align}
A_1
&=\Re\Hess\lt(\Phi\lt(WL; \frac{1}{2}, \frac{1}{4}\rt)\rt)
= -2\pi 
\begin{pmatrix}
1& 1 \\
1 & 2
\end{pmatrix}\\
A_2
&=\Re\Hess\lt(\Psi\lt(WL; 1, \frac{1}{2}, \frac{1}{4}\rt)\rt)
= -2\pi 
\begin{pmatrix}
-2 & -1 & -1 \\
-1 & 1 & 1 \\
-i & 1 & 2
\end{pmatrix} \\
A_3
&= \Re\Hess\lt(\Phi\lt(4_1; \frac{1}{2}, \frac{1}{4}\rt)\rt)
= -2\pi 
\begin{pmatrix}
4\sqrt{3} & 0 \\
0 & \sqrt{3}
\end{pmatrix}
\end{align}
One can verify that the three matrices above are negative definite (for example, by the Sylvester's criterion). We are going to show that the matrix
$$ A = \Re\Hess\lt(\Phi_{N,N}\lt(W_{0,1,1,0}^{p+1}(4_1); \frac{1}{2},\frac{1}{4},\dots,\frac{1}{2},\frac{1}{4},\frac{5}{6} \rt)\rt)$$
is also negative definite. To see this, note that for $\mathbf{z}=(z_1,\dots,z_{2p+3})\in\RR^{2p+3}$,
\begin{align}
\mathbf{z} A \mathbf{z}^T
&= (z_1, z_2) A_1 (z_1, z_2)^T 
+ \sum_{\gamma=1}^p (z_{2\gamma-1}, z_{2\gamma+1}, z_{2\gamma+2})A_2(z_{2\gamma-1}, z_{2\gamma+1}, z_{2\gamma+2})^T \notag\\
&\qquad + (z_{2p+1}, z_{2p+3})A_3(z_{2p+1}, z_{2p+3})^T \notag\\
&\leq 0
\end{align}
with equality holds if and only if $\mathbf{z}=0$. As a result, since the real part of the matrix 
$$\Hess\lt(\Phi_{N,N}\lt(W_{0,1,1,0}^{p+1}(4_1); \frac{1}{2},\frac{1}{4},\dots,\frac{1}{2},\frac{1}{4},\frac{5}{6} \rt)\rt)$$
is negative definite, by Lemma in \cite{L81}, it is non-singular.
\end{proof}

As a result, by using the same argument as before, the asymptotic expansion formula for \linebreak $J_{N} (W^{p+1}_{0,1,1,0}(4_1),e^{\frac{2\pi i}{N+\frac{1}{2}}})$ is given by
\begin{align}
&J_{N, N} (W_{0,1,1,0}^{p+1}(4_1),e^{\frac{2\pi i}{N+\frac{1}{2}}}) \notag\\
&\stackrel[N \to \infty]{\sim}{} - \frac{N+\frac{1}{2}}{\pi} 
\exp\lt(  -\varphi_r \lt(  \frac{\pi}{2N+1}\rt)\rt)^{p+1}
\sum_{D_\delta }e^{\frac{2\pi i}{N+\frac{1}{2}}\lt( -\frac{n_1}{2}+\frac{l_1}{2}- k +\sum_{\gamma=1}^p (3n_\gamma-(l_{\gamma+1}+n_{\gamma+1})) \rt)}
\notag \\
&\qquad \exp\lt( \lt(N+\frac{1}{2}\rt)\Phi_{N}\lt(W_{0,1,1,0}^{p+1}(4_1);\frac{n_1}{N+\frac{1}{2}}, \dots, \frac{n_{p+1}}{N+\frac{1}{2}}, \frac{l_1}{N+\frac{1}{2}}\dots, \frac{l_{p+1}}{N+\frac{1}{2}}, \frac{k}{N+\frac{1}{2}}\rt) \rt) \notag\\
&\stackrel[N \to \infty]{\sim}{} - \frac{N+\frac{1}{2}}{\pi} 
\exp\lt(  -\varphi_r \lt(  \frac{\pi}{2N+1}\rt)\rt)^{p+1}
\times e^{\pi i \lt(\frac{-23}{12}+\frac{3p}{2}\rt)} \lt(N+\frac{1}{2}\rt)^{\frac{2p+3}{2}}
\notag \\
&\qquad \frac{\pi^{(2p+3)/2}E(W_{0,1,1,0}^{p+1}(4_1); \frac{1}{2},\frac{1}{4},\dots,\frac{1}{2},\frac{1}{4},\frac{5}{6})}{\sqrt{-\det\Hess\lt(\Phi\lt(W_{0,1,1,0}^{p+1}(4_1); \frac{1}{2},\frac{1}{4},\dots,\frac{1}{2},\frac{1}{4},\frac{5}{6} \rt)\rt)} }\notag \\
&\qquad \times \exp\lt( \frac{N+\frac{1}{2}}{2\pi} 
\lt(v_3 || \SS^3 \backslash W_{0,1,1,0}^{p+1}(4_1) || + i \frac{(p+1)\pi^2}{4}\rt) - (3p-1)\pi i\rt)
\end{align}

\subsection{Generalized volume conjecture for $W_{0,1,1,0}^{p+1}(4_1)$}\label{GVCcp}

Next, we apply the `continuity argument' described in Section 2.3 of \cite{W19} to study the generalized volume conjecture for $W_{0,1,1,0}^{p+1}(4_1)$. Let $\ds s =\lim_{N\to \infty} \frac{M}{N+\frac{1}{2}} $. From direct computation, the potential function for colored Jones polynomials $J_{M}(W_{0,1,1,0}^{p+1}(4_1), e^{\frac{2\pi i}{N+\frac{1}{2}}})$ is given by
\begin{align}
&\Phi^{(s)}\lt(W_{0,1,1,0}^{p+1}(4_1); z_1, z_2, \dots, z_{2p+2}, z_{2p+3}\rt) \notag\\
&= \Phi^{(s)} (WL;z_1,z_2) + \sum_{\gamma=1}^p \Psi(WL;2z_{2\gamma-1}, z_{2\gamma+1},z_{2\gamma+2})  +  \Psi(4_1;z_{2p+1},z_{2p+3}) 
\end{align} 
where
\begin{align}
\Phi^{(s)} (WL;z_1,z_2) \notag
&=  \frac{1}{2\pi i} \lt[  - (2\pi i (s_2-1))(2\pi i z_1 + 2\pi i z_2)  \rt. \notag\\
&\qquad + \Li\lt(e^{2\pi i (s_2-1) - 2\pi i z_1 - 2\pi i z_2}\rt) - \Li\lt( e^{2\pi i (s_2-1) - 2\pi i z_2 }\rt) \notag\\
&\qquad \lt.+ \Li \lt(e^{2\pi i z_2}\rt) - \Li\lt(e^{2\pi iz_1+ 2\pi i z_2}\rt) + \Li\lt(e^{2\pi i z_1}\rt)  \rt] \\
\Phi (WL; 2z_{2\gamma - 1},z_{2\gamma + 1},z_{2\gamma + 2})
&=  8\pi i z_{2\gamma-1}^2  + \frac{1}{2\pi i} \lt[  - (2\pi i (2z_{2\gamma - 1}-1))(2\pi i z_{2\gamma + 1} + 2\pi i z_{2\gamma + 2})  \rt. \notag\\
&\qquad + \Li\lt(e^{2\pi i (2z_{2\gamma - 1}-1) - 2\pi i z_{2\gamma + 1} - 2\pi i z_{2\gamma + 2}}\rt) - \Li\lt( e^{2\pi i (2z_{2\gamma - 1}-1) - 2\pi i z_{2\gamma + 2} }\rt) \notag\\
&\qquad \lt.+ \Li \lt(e^{2\pi i z_{2\gamma + 2}}\rt) - \Li\lt(e^{2\pi iz_{2\gamma + 1}+ 2\pi i z_{2\gamma + 2}}\rt) + \Li\lt(e^{2\pi i z_{2\gamma + 1}}\rt)  \rt] \\
\Psi\lt(4_1; z_{2p+1},z_{2p+3} \rt) 
&= \frac{1}{2\pi i}\lt[ \Li(e^{-2\pi i z_{2p+3} + 4 \pi i z_{2p+1}}) - \Li(e^{2\pi i z_{2p+3} + 4 \pi i z_{2p+1}}) \rt] \notag\\
&\qquad - 2\pi i (1-2z_{2p+1}) z_{2p+3}
\end{align}

Note that the critical point equations of the potential function
$$\Phi^{(s)}\lt(W_{0,1,1,0}^{p+1}(4_1); z_1, z_2, \dots, z_{2p+2}, z_{2p+3}\rt)$$
are given by
\begin{empheq}[left = \empheqlbrace]{align}
0 &= \frac{\partial\Phi^{(s)}(WL;z_1,z_2)}{\partial z_1} +  \frac{\partial\Psi(WL;2z_1,z_3,z_4)}{\partial z_1} \\
0 &= \frac{\partial\Phi(WL;z_1,z_2)}{\partial z_2} \\
0 &= \frac{\partial\Psi(WL;2z_{2\gamma-3},z_{2\gamma-1},z_{2\gamma})}{\partial z_{2\gamma -1}} + \frac{\partial\Psi(WL;2z_{2\gamma-1},z_{2\gamma+1},z_{2\gamma+2})}{\partial z_{2\gamma -1}}  \qquad \text{for $\gamma=2,\dots, p$} \\
0 &= \frac{\partial\Psi(WL;2z_{2\gamma-1}, z_{2\gamma+1},z_{2\gamma + 2})}{\partial z_{2\gamma+2}}  
\qquad\qquad\qquad\qquad\qquad\qquad\qquad\quad
 \text{for $\gamma=1,\dots, p$} \\
0 &= \frac{\partial\Psi(WL;2z_{2p-1},z_{2p+1},z_{2p+2})}{\partial z_{2p+1}} 
+ \frac{\partial\Psi(4_1;2z_{2p+1},z_{2p+3})}{\partial z_{2p+1}} \\
0 &= \frac{\partial\Psi(4_1;2z_{2p+1},z_{2p+3})}{\partial z_{2p+3}} 
\end{empheq}

To solve this system of equations, we first study the equation
$$ \frac{\partial\Phi^{(s)}(WL;z_1,z_2)}{\partial z_1} = 0 $$
Note that the function $\Phi^{(s)}(WL;z_1,z_2)$ is the function $\Phi^{(1,s)}(z_1,z_2)$ discussed in Section 2.4 of \cite{W19}. Let $Z_1=e^{2\pi i z_1}, Z_2 = e^{2\pi i z_2}$ and $B=e^{2\pi i s}$. Recall from Section 2.4 in \cite{W19} that for the potential function $\Phi^{(1,s)}(z_1,z_2)$, when we put $z_1=\frac{1}{2}$, its critical point equations become
\begin{empheq}[left = \empheqlbrace]{align}
\frac{\lt(1 + \dfrac{B}{Z_2} \rt)\lt( 1 + Z_2\rt)}{2} &=   B \label{smeridian2} \\
\frac{\lt(1 + \dfrac{B}{Z_2} \rt)\lt( 1 + Z_2\rt)}{(1-\dfrac{B}{Z_2})(1- Z_2)} &= B \label{slong2}
\end{empheq}

Next, consider the equation 
\begin{align}
\lt(1-\dfrac{B}{Z_2}\rt)\lt(1- Z_2\rt) = 2 \label{qlemma}
\end{align}
which is equivalent to 
\begin{align}
Z_2^2 + (1 - B)Z_2 + B =0 \label{quadr}
\end{align}

Let $Z_2(s) = \frac{-(1-B) +\sqrt{(1-B)^2 - 4B}}{2}$. Note that $Z_2(1) = i$. Furthermore, from (\ref{qlemma}),
\begin{align}
\lt( \dfrac{B}{Z_2} + Z_2 \rt) = -1 + B,
\end{align}
which implies 
\begin{align}\label{qlemma2}
\lt(1 + \dfrac{B}{Z_2} \rt)\lt( 1 + Z_2\rt) 
=\lt(1 - \dfrac{B}{Z_2} \rt)\lt( 1 - Z_2\rt) + 2\lt( \dfrac{B}{Z_2} + Z_2 \rt)
= 2B
\end{align}

As a result, from (\ref{qlemma}) and (\ref{qlemma2}), $(Z_1(s),Z_2(s))=(-1,Z_2(s))$ is a holomorphic family of solution for (\ref{smeridian2}) and (\ref{slong2}) with $(Z_1(1),Z_2(1))=(-1, i)= (e^{2\pi i (\frac{1}{2})}, e^{2\pi i (\frac{1}{4})})$. Let $z_2(s)$ be the holomorhpic function such that $Z_2(s)=e^{2\pi i z_2(s)}$ with $z_2(1)=\frac{1}{4}$.

By straightforward computation, the point 
$$  (z_1, z_2, \dots, z_{2p+2}, z_{2p+3}) = \lt(\frac{1}{2}, z_2(s), \frac{1}{2}, \frac{1}{4},\dots, \frac{1}{2}, \frac{1}{4}, \frac{5}{6}  \rt)  $$
is a critical point for the Fourier coefficient 
$$\Phi^{(s)}\lt(W_{0,1,1,0}^{p+1}(4_1); z_1, z_2, \dots, z_{2p+2}, z_{2p+3}\rt)- \lt(\sum_{\gamma=1}^{p} 6\pi i z_{2\gamma-1}\rt) + 2\pi i z_{2p+1}$$
Furthermore, by Theorem 3 in \cite{W19}, we have
\begin{align*}
&\Re\Phi^{(s)}\lt(W_{0,1,1,0}^{p+1}(4_1), \frac{1}{2}, z_2(s), \frac{1}{2}, \frac{1}{4},\dots, \frac{1}{2}, \frac{1}{4}, \frac{5}{6}  \rt) \\
&= \frac{1}{2\pi} \lt[\Vol(\SS^3 \backslash WL; u_1 = 0, u_2=2\pi i(1-s)) + p\Vol(\SS^3 \backslash WL) + \Vol(\SS^3 \backslash 4_1) \rt]
\end{align*}
By the `continuity argument', Theorem~\ref{mainthm3} follow from the same arguments as before. In particular, it implies Theorem~\ref{mainthm5} for $W^{p+1}_{0,1,1,0}(4_1)$. Corollary~\ref{cablingtv2} for $\SS^3\backslash W_{0,1,1,0}^{p+1}(4_1)$ then follows directly from the method described in Section~\ref{strategy}.

\section{Iterated Whitehead double on Hopf link}\label{Walphabeta}
In this Section, we study the asymptotics of the $(M_1,M_2)$-th colored Jones polynomials of the link $W^\alpha_\beta$ at $(N+\frac{1}{2})$-th root of unity.

First of all, by using the method described in Section 2 of \cite{HZ07}, the formula of the colored Jones polynomials of the link $W^\alpha_\beta$ is given by

\begin{align}
&J_{M_1,M_2}(W^\alpha_\beta,t) \notag\\
&= 
\sum_{n_1=0}^{2M_1-1}\sum_{n_2=0}^{2n_1}\dots\sum_{n_{\alpha-1}=0}^{2n_{\alpha-2}}
\sum_{n_1'=0}^{2M_1-1}\sum_{n_2'=0}^{2n_1'}\dots\sum_{n_{\beta}'=0}^{2n_{\beta-1}'} C(n_1,t;M_1) \prod_{\gamma=2}^{\alpha-1} C(n_\gamma, t; 2n_{\gamma-1}+1) \notag\\
&\qquad \cdot
C(n_1',t;M_2) \prod_{\gamma=2}^{\beta} C(n_\gamma', t; 2n_{\gamma-1}+1)
\cdot J_{2n_{\beta}'+1,2{n_{\alpha-1}+1}}(WL,t) \notag\\
&=
\sum_{n_1=0}^{2M_1-1}\sum_{n_2=0}^{2n_1}\dots\sum_{n_{\alpha-1}=0}^{2n_{\alpha-2}}
\sum_{n_1'=0}^{2M_1-1}\sum_{n_2'=0}^{2n_1'}\dots\sum_{n_{\beta}'=0}^{2n_{\beta-1}'} 
\sum_{n_{\zeta}=0}^{2n_{\alpha-1}} \notag\\
&\qquad \cdot
C(n_1,t;M_1) \prod_{\gamma=2}^{\alpha-1} C(n_\gamma, t; 2n_{\gamma-1}+1)
C(n_1',t;M_2) \prod_{\gamma=2}^{\beta} C(n_\gamma', t; 2n_{\gamma-1}'+1)  \notag\\
&\qquad
\cdot \frac{t^{(2n_{\beta}+1)(2n_{\zeta}+1)/2}-t^{-(2n_{\beta}+1)(2n_\zeta+1)/2}}{t^{\frac{1}{2}}-t^{-\frac{1}{2}}} \cdot C(n_\zeta, t; 2n_{\alpha-1}+1)
\end{align}
where
$$C(n,t; N) = \sum_{l = 0}^{N-1-n} t^{-N(l+n)} \prod_{j=1}^{n} \frac{(1-t^{N-l-j})(1-t^{l+j})}{1-t^j}$$

Thus,
\begin{align}
&J_{M_1,M_2}(W^\alpha_\beta,t) = \notag\\
&\sum_{n_1=0}^{2M_1-1}\sum_{n_2=0}^{2n_1}\dots\sum_{n_{\alpha-1}=0}^{2n_{\alpha-2}}
\sum_{n_1'=0}^{2M_1-1}\sum_{n_2'=0}^{2n_1'}\dots\sum_{n_{\beta}'=0}^{2n_{\beta-1}'} 
\sum_{n_{\zeta}=0}^{2n_{\alpha-1}} \notag\\
& \sum_{l_1=0}^{M_1-1-n_1}\sum_{l_2=0}^{2n_1-n_2}\sum_{l_3=0}^{2n_2-n_3}\dots\sum_{l_{\alpha-1}=0}^{2n_{\alpha-2}-n_{\alpha-1}} 
\sum_{l_1'=0}^{M_2-1-n_1}\sum_{l_2'=0}^{2n_1'-n_2'}\sum_{l_3'=0}^{2n_2'-n_3'}\dots \sum_{l_{\beta}'=0}^{2n_{\beta-1}'-n_{\beta}'} 
\sum_{l_\zeta=0}^{2n_{\alpha-1} - n_\zeta}\notag\\
&\quad \cdot t^{\lt(M_1^2 - \frac{M_1}{2} - \frac{1}{2}\rt)+\sum_{\gamma=2}^{\alpha-1}(3n_{\gamma-1} - (l_{\gamma}+n_{\gamma}))
+\lt(M_2^2 - \frac{M_2}{2} - \frac{1}{2}\rt)+\sum_{\gamma=2}^{\beta}(3n'_{\gamma-1} - (l'_{\gamma}+n'_{\gamma})) - l_\zeta - n_\zeta} \notag\\
&\quad 
\lt( t^{-M_1(l_1+n_1)} \frac{(t)_{M_1-l_1-1}(t)_{l_1+n_1}}{(t)_{n_1}(t)_{M_1-l_1-n_1-1}(t)_{l_1}}\rt)
\prod_{\gamma=2}^{\alpha-1}
\lt(t^{(2n_{\gamma-1})^2} \cdot t^{-(2n_{\gamma-1})(l_{\gamma}+n_{\gamma}) }\frac{(t)_{2n_{\gamma-1}-l_{\gamma}}(t)_{l_{\gamma}+n_{\gamma}}}{(t)_{n_{\gamma}}(t)_{2n_{\gamma-1}-l_{\gamma}-n_{\gamma}}(t)_{l_{\gamma}}}\rt) 
\notag\\
&\quad 
\lt( t^{-M_2(l'_1+n'_1)} \frac{(t)_{M_2-l'_1-1}(t)_{l'_1+n'_1}}{(t)_{n'_1}(t)_{M_2-l'_1-n'_1-1}(t)_{l'_1}}\rt)
\prod_{\gamma=2}^{\beta}
\lt(t^{(2n'_{\gamma-1})^2} \cdot t^{-(2n'_{\gamma-1})(l'_{\gamma}+n'_{\gamma}) }\frac{(t)_{2n'_{\gamma-1}-l'_{\gamma}}(t)_{l'_{\gamma}+n'_{\gamma}}}{(t)_{n'_{\gamma}}(t)_{2n'_{\gamma-1}-l'_{\gamma}-n'_{\gamma}}(t)_{l'_{\gamma}}}\rt) \notag\\
&\quad \cdot \frac{t^{(2n_{\beta}+1)(2n_{\zeta}+1)/2}-t^{-(2n_{\beta}+1)(2n_\zeta+1)/2}}{t^{\frac{1}{2}}-t^{-\frac{1}{2}}}
\lt(t^{(2n_{\alpha-1})^2} \cdot t^{-(2n_{\alpha-1})(l_{\zeta}+n_{\zeta}) }\frac{(t)_{2n_{\alpha-1}-l_{\zeta}}(t)_{l_{\zeta}+n_{\zeta}}}{(t)_{n_{\zeta}}(t)_{2n_{\alpha-1}-l_{\zeta}-n_{\zeta}}(t)_{l_{\zeta}}}\rt)
\end{align}

By Lemma~\ref{uppbdW}, when $t=e^{\frac{2\pi i}{N+\frac{1}{2}}}$, we know that
\begin{align*}
\limsup_{N \to \infty} \frac{2\pi}{N+\frac{1}{2}} \log| J_{N, N} (W^\alpha_\beta, t) | \leq v_3 || \SS^3 \backslash W^\alpha_\beta|| = (\alpha+\beta) \Vol(\SS^3\backslash WL)
\end{align*}
Furthermore, we consider the norm of the term
$$ \frac{(t)_{M_1-l_1-1}(t)_{l_1+n_1}}{(t)_{n_1}(t)_{M_1-l_1-n_1-1}(t)_{l_1}}, $$
For $\ds s_1 = \lim_{N\to \infty}\frac{M_1}{N+\frac{1}{2}}$ sufficiently close to 1, in order to compute the asymptotic expansion formula, it suffices to consider those $n_1$ and $l_1$ with
$$ \frac{n_1}{N+\frac{1}{2}} \sim \frac{1}{2} \quad \text{and} \quad \frac{l_1}{N+\frac{1}{2}} \sim \frac{1}{4}$$
In this case, we have $2n_1 + 1 \sim 1$. By considering the term
$$ \frac{(t)_{2n_{1}-l_{2}}(t)_{l_{2}+n_{2}}}{(t)_{n_{2}}(t)_{2n_{1}-l_{2}-n_{2}}(t)_{l_{2}}} $$
to compute the asymptotic expansion formula, it suffices to consider those $n_2$ and $l_2$ with
$$ \frac{n_2}{N+\frac{1}{2}} \sim \frac{1}{2} \quad \text{and} \quad \frac{l_2}{N+\frac{1}{2}} \sim \frac{1}{4}$$
Inductively, for $\gamma=3,\dots, \alpha-1$, it suffices to consider
$$ \frac{n_\gamma}{N+\frac{1}{2}} \sim \frac{1}{2} \quad \text{and} \quad \frac{l_\gamma}{N+\frac{1}{2}} \sim \frac{1}{4}$$

Similarly, for $\gamma = 1,2,\dots, \beta$ and $\ds s_2 = \lim_{N\to \infty}\frac{M_2}{N+\frac{1}{2}}$ sufficiently close to 1, we only need to consider the terms with
$$ \frac{n_\gamma'}{N+\frac{1}{2}} \sim \frac{1}{2} \quad \text{and} \quad \frac{l_\gamma'}{N+\frac{1}{2}} \sim \frac{1}{4}$$

For the same reason, since $\frac{2n_{\alpha - 1}}{N+\frac{1}{2}}$ is closed to one, it suffices to consider those $n_\zeta$ and $l_\zeta$ with
$$ \frac{n_\zeta}{N+\frac{1}{2}} \sim \frac{1}{2} \quad \text{and} \quad \frac{l_\zeta}{N+\frac{1}{2}} \sim \frac{1}{4}$$

From now on, we restrict our attention to the sum of the terms satisfying the above conditions.

Define
\begin{align}
&\Psi_{N}(WL; z_{2\gamma-1}, z_{2\gamma+1},z_{2\gamma+2})\notag\\
&= \frac{1}{2\pi i} \lt[
 - (2\pi i (z_{2\gamma-1} - 1))(2\pi i z_{2\gamma+1} + 2\pi i z_{2\gamma+2}) +\varphi_r \lt( z_{2\gamma-1}\pi  - \pi z_{\gamma+1} - \pi z_{2\gamma+2} -\frac{\pi}{2N+1}\rt) \rt.\notag \\
&\qquad  - \varphi_r\lt(  z_{2\gamma-1}\pi - \pi z_{2\gamma+2}- \frac{\pi}{2N+1}\rt) + \varphi_r \lt(\pi z_{2\gamma+2} + \frac{\pi}{2N+1}\rt)   \notag\\
&\lt.\qquad - \varphi_r\lt( \pi z_{2\gamma+1} + \pi z_{2\gamma+2} + \frac{\pi}{2N+1}\rt) + \varphi_r\lt( \pi z_{2\gamma+1} + \frac{\pi}{2N+1}\rt) \rt] 
\end{align}

Define
\begin{align}
\Xi_N^{\pm} (WL; z'_{2\beta-1}, z_{2\alpha-3},z_{\zeta},z_{\zeta+1})
&= \frac{1}{2\pi i}\lt[ \pm \lt(2\pi i \lt(z'_{2\beta-1}\rt)\rt) \lt(2\pi i \lt(z_\zeta -\frac{1}{2} \rt)\rt) \rt. \notag \\
&\qquad  - (2\pi i (z_{2\alpha-3}))(2\pi i z_\zeta + 2\pi i z_{\zeta+1})  \notag \\
&\qquad  +  \varphi_r \lt( z_{2\alpha-3}  - \pi z_\zeta - \pi z_\zeta -\frac{\pi}{2N+1}\rt)   \notag\\
&\qquad  - \varphi_r\lt(  z_{2\alpha-3} - \pi z_{\zeta+1}- \frac{\pi}{2N+1}\rt) \notag \\
&\qquad  + \varphi_r \lt(\pi z_{\zeta+1} + \frac{\pi}{2N+1}\rt)  - \varphi_r\lt( \pi z_\zeta + \pi z_{\zeta+1} + \frac{\pi}{2N+1}\rt) \notag\\
&\lt.\qquad  + \varphi_r\lt( \pi z_\zeta + \frac{\pi}{2N+1}\rt) \rt] 
\end{align}
and
\begin{align}
&\Phi_{M_1,M_2}^{\pm} \lt(W^\alpha_\beta; z_1, z_2, \dots, z_{2\alpha-3},z_{2\alpha-2}, 
z_1', z_2', \dots, z_{2\beta-1}',z_{2\beta}', z_{\zeta},z_{\zeta+1} \rt) \notag\\
&=  \Psi_{N}\lt(WL; \frac{M_1}{N+\frac{1}{2}}; z_1, z_2\rt) + \sum_{\gamma=2}^{\alpha-1} \Psi_{N}(WL;2z_{2\gamma-3}, z_{2\gamma-1},z_{2\gamma}) 
\notag \\
&\qquad + \Psi_{N}\lt(WL; \frac{M_2}{N+\frac{1}{2}}; z_1', z_2'\rt) + \sum_{\gamma=2}^{\beta} \Psi_{N}(WL;2z_{2\gamma-3}', z_{2\gamma-1}',z_{2\gamma}') \notag\\
&\qquad\qquad + \Xi^{\pm}_N (WL; z'_{2\beta-1}, z_{2\alpha-3},z_{\zeta},z_{\zeta+1})
\end{align} 

Then we can write
\begin{align}
J_{M_1,M_2}(W^\alpha_\beta,e^{\frac{2\pi i}{N+\frac{1}{2}}})
\stackrel[N \to \infty]{\sim}{}  \frac{e^{\frac{2\pi i}{N+\frac{1}{2}} \lt(M_1^2 - \frac{M_1}{2} + M_2^2 - \frac{M_2}{2} - 1\rt)}}{2i\sin(\frac{\pi}{N+\frac{1}{2}})}
\exp\lt(  -\varphi_r \lt(  \frac{\pi}{2N+1}\rt)\rt)^{\alpha+\beta}
(I_+ + I_-),
\end{align}
where
\begin{align}
I_\pm 
= e^{\pm \frac{2\pi i}{N+\frac{1}{2}}} &\sum_{n_1=0}^{2M_1-1}\sum_{n_2=0}^{2n_1}\dots\sum_{n_{\alpha-1}=0}^{2n_{\alpha-2}}
\sum_{n_1'=0}^{2M_1-1}\sum_{n_2'=0}^{2n_1'}\dots\sum_{n_{\beta}'=0}^{2n_{\beta-1}'} 
\sum_{n_{\zeta}=0}^{2n_{\alpha-1}} \notag\\
& \sum_{l_1=0}^{M_1-1-n_1}\sum_{l_2=0}^{2n_1-n_2}\sum_{l_3=0}^{2n_2-n_3}\dots\sum_{l_{\alpha-1}=0}^{2n_{\alpha-2}-n_{\alpha-1}} 
\sum_{l_1'=0}^{M_2-1-n_1}\sum_{l_2'=0}^{2n_1'-n_2'}\sum_{l_3'=0}^{2n_2'-n_3'}\dots \sum_{l_{\beta}'=0}^{2n_{\beta-1}'-n_{\beta}'} 
\sum_{l_\zeta=0}^{2n_{\alpha-1} - n_\zeta}\notag\\
&\qquad e^{2\pi i \lt( 
\sum_{\gamma=2}^{\alpha-1} (3z_{2\gamma-3}+(z_{2\gamma}+z_{2\gamma-1}))
+
\sum_{\gamma=2}^{\beta} (3z'_{2\gamma-3}+(z'_{2\gamma}+z'_{2\gamma-1}))
+
(z_\zeta+z_{\zeta+1})
\pm 
(z'_{2\beta-1}+z_\zeta)\rt)}\notag\\
&\qquad \exp\lt(\frac{N+\frac{1}{2}}{2\pi i} \Phi_{M_1,M_2}^{\pm} \lt(W^\alpha_\beta; \frac{n_1}{N+\frac{1}{2}}, \frac{l_1}{N+\frac{1}{2}}, \dots, \frac{n_{\alpha-1}}{N+\frac{1}{2}}, \frac{l_{\alpha-1}}{N+\frac{1}{2}}, \rt.\rt. \notag\\
&\qquad \lt.\lt.
\frac{n_1'}{N+\frac{1}{2}}, \frac{l_1'}{N+\frac{1}{2}}, \dots, \frac{n_\beta'}{N+\frac{1}{2}}, \frac{l_\beta'}{N+\frac{1}{2}}, \frac{n_\zeta}{N+\frac{1}{2}}, \frac{l_\zeta}{N+\frac{1}{2}} \rt)  \rt)
\end{align}

Take $N\to \infty$, we have
\begin{align}
&\Phi^\pm_{(s_1,s_2)} (W^\alpha_\beta; z_1, z_2, \dots, z_{2\alpha-3},z_{2\alpha-2}, 
z_1', z_2', \dots, z_{2\beta-1}',z_{2\beta}', z_{\zeta},z_{\zeta+1})  \notag\\
&=  \Psi\lt(WL; \frac{M_1}{N+\frac{1}{2}}; z_1, z_2\rt) + \sum_{\gamma=2}^{\alpha-1} \Psi(WL;2z_{2\gamma-3}, z_{2\gamma-1},z_{2\gamma})  \notag \\
&\qquad + \Psi\lt(WL; \frac{M_2}{N+\frac{1}{2}}; z_1', z_2'\rt) + \sum_{\gamma=2}^{\beta} \Psi(WL;2z_{2\gamma-3}', z_{2\gamma-1}',z_{2\gamma}') \notag \\
&\qquad\qquad + \Xi^{\pm} (WL; 2z'_{2\beta-1}, 2z_{2\alpha-3},z_{\zeta},z_{\zeta+1})
\end{align}
with
\begin{align}
&\Psi (WL; z_{2\gamma - 1},z_{2\gamma + 1},z_{2\gamma + 2}) \notag\\
&=  8 \pi i (z_{2\gamma-1})^2 + \frac{1}{2\pi i} \lt[  - (2\pi i (z_{2\gamma - 1}-1))(2\pi i z_{2\gamma + 1} + 2\pi i z_{2\gamma + 2})  \rt. \notag\\
&\qquad + \Li\lt(e^{2\pi i (z_{2\gamma - 1}-1) - 2\pi i z_{2\gamma + 1} - 2\pi i z_{2\gamma + 2}}\rt) - \Li\lt( e^{2\pi i (z_{2\gamma - 1}-1) - 2\pi i z_{2\gamma + 2} }\rt) \notag\\
&\qquad \lt.+ \Li \lt(e^{2\pi i z_{2\gamma + 2}}\rt) - \Li\lt(e^{2\pi iz_{2\gamma + 1}+ 2\pi i z_{2\gamma + 2}}\rt) + \Li\lt(e^{2\pi i z_{2\gamma + 1}}\rt)  \rt] \\
&\Xi^\pm (WL;z'_{2\beta-1}, z_{2\alpha-3}, z_{\zeta},z_{\zeta+1}) \notag\\
&=  8\pi i (z'_{2\beta-1})^{2} + \frac{1}{2\pi i} \lt[ \pm \lt(2\pi i \lt( z'_{2\beta-1} - 1\rt)\rt) \lt(2\pi i \lt(z_\zeta -\frac{1}{2} \rt)\rt) \rt.\notag\\
&\qquad - (2\pi i (z_{2\alpha-3}-1))(2\pi i z_\zeta + 2\pi i z_{\zeta+1})     \notag \\
&\qquad + \Li\lt(e^{2\pi i (z_{2\alpha-3}-1) - 2\pi i z_\zeta - 2\pi i z_{\zeta+1}}\rt) - \Li\lt( e^{2\pi i (z_{2\alpha-3}-1) - 2\pi i z_{\zeta+1} }\rt) \notag\\
&\qquad \lt.+ \Li \lt(e^{2\pi i z_{\zeta+1}}\rt) - \Li\lt(e^{2\pi iz_\zeta+ 2\pi i z_{\zeta+1}}\rt) + \Li\lt(e^{2\pi i z_\zeta}\rt)\rt]
\end{align}

Note that the function 
$$\Psi(WL;2z_{2\gamma-1}, z_{2\gamma+1},z_{2\gamma+2})-8\pi i (z_{2\gamma-1})^2$$
is the potential function $\Phi^{(1,s_2)}(z_1,z_2)$ in \cite{W19} with $s_2 = 2z_{2\gamma-1}$, $z_1 =  z_{2\gamma+1}$ and $z_2 = z_{2\gamma+2}$. Besides, the function 
$$\Xi^{\pm}(WL; 2z'_{2\beta-1}, 2z_{2\alpha-3}, z_{\zeta},z_{\zeta+1})
-8\pi i (z_{2\beta-1}')^2$$
is the potential function $\Phi^{\pm(s_1,s_2)}(z_1,z_2)$ in \cite{W19} with $s_1=2z'_{2\beta-1}$, $s_2 = 2z_{2\alpha-3}$, $z_1 =  z_{\zeta}$ and $z_2 = z_{\zeta+1}$.

By the maximum point estimation, it remains to study the potential function 
$$\Phi^\pm \lt(W^\alpha_\beta; z_1, z_2, \dots, z_{2\alpha-3},z_{2\alpha-2}, 
z_1', z_2', \dots, z_{2\beta-1}',z_{2\beta}', z_{\zeta},z_{\zeta+1} \rt) $$
defined on the domain
\begin{align*}
D &= \lt\{(z_1,z_2,\dots,z_{2\alpha-3},z_{2\alpha-2},
z_1', z_2', \dots, z_{2\beta-1}',z_{2\beta}', z_{\zeta},z_{\zeta+1})\mid \rt.\\
&\qquad |z_{2\gamma-1}-\frac{1}{2}|, |z_{2\gamma} - \frac{1}{4}|<\eta \text{, where } \gamma=1,\dots, \alpha-1 \\
&\qquad |z_{2\gamma-1}'-\frac{1}{2}|, |z_{2\gamma}' - \frac{1}{4}|<\eta \text{, where } \gamma=1,\dots, \beta \\
&\qquad  |z_{\zeta}-\frac{1}{2}|, |z_{\zeta+1} - \frac{1}{4}|<\eta
\lt. \rt\}
\end{align*}
where $\eta>0$ is any sufficiently small positive constant.

By considering the Talyor series expansion, we have
\begin{align*}
&\exp\lt( \lt(N+\frac{1}{2}\rt) \Phi^\pm_{M_1,M_2}\lt(W^\alpha_\beta; z_1, z_2, \dots, z_{2\alpha-3},z_{2\alpha-2}, 
z_1', z_2', \dots, z_{2\beta-1}',z_{2\beta}', z_{\zeta},z_{\zeta+1} \rt)  \rt)\\
&= E(W^\alpha_\beta; z_1, z_2, \dots, z_{2\alpha-3},z_{2\alpha-2}, 
z_1', z_2', \dots, z_{2\beta-1}',z_{2\beta}', z_{\zeta},z_{\zeta+1} )\\
&\qquad \exp\lt( \lt(N+\frac{1}{2}\rt) \Phi^{\pm} (W^\alpha_\beta ;z_1, z_2, \dots, z_{2\alpha-3},z_{2\alpha-2}, 
z_1', z_2', \dots, z_{2\beta-1}',z_{2\beta}', z_{\zeta},z_{\zeta+1}) \rt)\lt(1+O\lt(\frac{1}{N+\frac{1}{2}}\rt)\rt),
\end{align*}
where
\begin{align*}
&E(W^\alpha_\beta; z_1, z_2, \dots, z_{2\alpha-3},z_{2\alpha-2}, 
z_1', z_2', \dots, z_{2\beta-1}',z_{2\beta}', z_{\zeta},z_{\zeta+1}  ) \\
&= \exp\lt(\log(1-e^{-2\pi i z_1 - 2\pi i z_2}) - \log(1-e^{-2\pi i z_2}) - \frac{1}{2}\log(1-e^{2\pi i z_2}) \rt.\\
&\qquad\qquad + \frac{1}{2}\log(1-e^{2\pi i z_1 + 2\pi i z_2}) - \frac{1}{2}\log(1-e^{2\pi i z_1}) \\
&\qquad + \sum_{\gamma=2}^{\alpha-1} \lt(
-\frac{1}{2}\log(1-e^{4\pi iz_{2\gamma - 3}-2\pi i z_{2\gamma-1} - 2\pi i z_{2\gamma}})
+ \frac{1}{2}\log(1-e^{4\pi i z_{2\gamma -3} - 2\pi i z_{2\gamma}})
\rt. \\
&\qquad\qquad\quad \lt. - \frac{1}{2}\log(1-e^{2\pi i z_{2\gamma}}) + \frac{1}{2}\log(1-e^{2\pi i z_{2\gamma-1} + 2\pi i z_{2\gamma}}) - \frac{1}{2}\log(1-e^{2\pi i z_{2\gamma-1}}) \rt) \\
&\quad\quad\quad + \log(1-e^{-2\pi i z'_1 - 2\pi i z'_2}) - \log(1-e^{-2\pi i z'_2}) - \frac{1}{2}\log(1-e^{2\pi i z'_2}) \\
&\qquad\qquad + \frac{1}{2}\log(1-e^{2\pi i z'_1 + 2\pi i z'_2}) - \frac{1}{2}\log(1-e^{2\pi i z'_1}) \\
&\qquad + \sum_{\gamma=2}^\beta \lt(
-\frac{1}{2}\log(1-e^{4\pi iz'_{2\gamma - 3}-2\pi i z'_{2\gamma-1} - 2\pi i z'_{2\gamma}})
+ \frac{1}{2}\log(1-e^{4\pi i z'_{2\gamma -3} - 2\pi i z'_{2\gamma}})
\rt. \\
&\qquad\qquad\quad \lt. - \frac{1}{2}\log(1-e^{2\pi i z'_{2\gamma}}) + \frac{1}{2}\log(1-e^{2\pi i z'_{2\gamma-1} + 2\pi i z'_{2\gamma}}) - \frac{1}{2}\log(1-e^{2\pi i z'_{2\gamma-1}})
\rt) \\
&\qquad + \lt(
-\frac{1}{2}\log(1-e^{4\pi iz_{2\alpha - 3}-2\pi i z_{\zeta} - 2\pi i z_{\zeta+1}})
+ \frac{1}{2}\log(1-e^{4\pi i z_{2\alpha -3} - 2\pi i z'_{\zeta+1}})
\rt. \\
&\qquad\qquad\quad \lt. - \frac{1}{2}\log(1-e^{2\pi i z_{\zeta+1}}) + \frac{1}{2}\log(1-e^{2\pi i z_{\zeta} + 2\pi i z'_{\zeta+1}}) - \frac{1}{2}\log(1-e^{2\pi i z_{\zeta}})
\rt) 
\end{align*}

Recall from Section \ref{GVCcp} that for $s_1,s_2$ sufficiently close to $1$, we have a holomorphic family of solution $(z_1(s), z_2(s))=(\frac{1}{2},z_2(s))$ for the critical point equations
$$ \frac{\partial}{\partial z_1} \Phi^{(s)}(WL,z_1,z_2) 
= \frac{\partial}{\partial z_2} \Phi^{(s)}(WL,z_1,z_2)
= 0 $$
In particular, when $z_1=\frac{1}{2}$ and $z_2=z_2\lt(\frac{M_1}{N+\frac{1}{2}}\rt)$, we have
$$\frac{\partial\Psi(WL;\frac{M_1}{N+\frac{1}{2}},z_1,z_2)}{\partial z_1} = 0 $$

Note that the critical point equations of the potential function
$$\Phi^\pm(W^\alpha_\beta; z_1, z_2, \dots, z_{2\alpha-3},z_{2\alpha-2}, 
z_1', z_2', \dots, z_{2\beta-1}',z_{2\beta}', z_{\zeta},z_{\zeta+1})$$
are given by
\begin{empheq}[left = \empheqlbrace]{align}
0 &= \frac{\partial\Psi(WL; \frac{M_1}{N+\frac{1}{2}}, z_1,z_2)}{\partial z_1} +  \frac{\partial\Psi(WL;2z_1,z_3,z_4)}{\partial z_1} \\
0 &= \frac{\partial\Psi(WL;\frac{M_1}{N+\frac{1}{2}}, z_1,z_2)}{\partial z_2} \\
0 &= \frac{\partial\Psi(WL;2z_{2\gamma-3},z_{2\gamma-1},z_{2\gamma})}{\partial z_{2\gamma -1}} + \frac{\partial\Psi(WL;2z_{2\gamma-1},z_{2\gamma+1},z_{2\gamma+2})}{\partial z_{2\gamma -1}}  \qquad \text{for $\gamma=2,\dots, \alpha-2$} \\
0 &= \frac{\partial\Psi(WL;2z_{2\alpha-5},z_{2\alpha-3},z_{2\alpha-2})}{\partial z_{2\alpha -3}} +  \frac{\partial\Xi^\pm(WL; 2z'_{2\beta-1}, 2z_{2\alpha-3}, z_{\zeta},z_{\zeta+1})}{\partial z_{2\alpha -3}} \\
0 &= \frac{\partial\Psi(WL;2z_{2\gamma-3}, z_{2\gamma-1},z_{2\gamma})}{\partial z_{2\gamma}}  
\qquad\qquad\qquad\qquad\qquad\qquad\qquad\quad
 \text{for $\gamma=2,\dots, \alpha-1$} \\
 0 &= \frac{\partial\Psi(WL; \frac{M_2}{N+\frac{1}{2}}, z_1',z_2')}{\partial z_1'} +  \frac{\partial\Psi(WL;2z_1',z_3',z_4')}{\partial z_1'} \\
0 &= \frac{\partial\Psi(WL;\frac{M_2}{N+\frac{1}{2}}, z_1',z_2')}{\partial z_2'} \\
0 &= \frac{\partial\Psi(WL;2z_{2\gamma-3}',z_{2\gamma-1}',z_{2\gamma}')}{\partial z_{2\gamma -1}'} + \frac{\partial\Psi(WL;2z_{2\gamma-1}',z_{2\gamma+1}',z_{2\gamma+2}')}{\partial z_{2\gamma -1}'}  \qquad \text{for $\gamma=2,\dots, \beta-1$} \\
0 &= \frac{\partial\Psi(WL;2z_{2\beta-3}',z_{2\beta-1}',z_{2\beta}')}{\partial z_{2\beta -1}'} +  \frac{\partial\Xi^\pm(WL; 2z_{2\beta-1}', 2z_{2\alpha-3},  z_{\zeta},z_{\zeta+1})}{\partial z_{2\beta -1}'} \\
0 &= \frac{\partial\Psi(WL;2z_{2\gamma-3}', z_{2\gamma-1}',z_{2\gamma}')}{\partial z_{2\gamma}'}  
\qquad\qquad\qquad\qquad\qquad\qquad\qquad\quad
 \text{for $\gamma=2,\dots, \beta-1$} \\
0 &= \frac{\partial\Xi^\pm(WL; 2z_{2\beta-1}', 2z_{2\alpha-3}, z_{\zeta},z_{\zeta+1})}{\partial z_{\zeta}} \\
0 &= \frac{\partial\Xi^\pm(WL; 2z_{2\beta-1}', 2z_{2\alpha-3}, z_{\zeta},z_{\zeta+1})}{\partial z_{\zeta+1}}
\end{empheq}

To solve the critical point equations for the potential function 
$$\Phi^\pm(W^\alpha_\beta; z_1, z_2, \dots, z_{2\alpha-3},z_{2\alpha-2}, 
z_1', z_2', \dots, z_{2\beta-1}',z_{2\beta}', z_{\zeta},z_{\zeta+1}),$$
note that if we put $z_1=z_3=\frac{1}{2}, z_2= z_2\lt(\frac{M_1}{N+\frac{1}{2}}\rt), z_4=\frac{1}{4}$, we get
\begin{align*}
\frac{\partial\Psi(WL;\frac{M_1}{N+\frac{1}{2}},z_1,z_2)}{\partial z_1} +  \frac{\partial\Psi(WL;2z_1,z_3,z_4)}{\partial z_1}
&= 16\pi i \lt(\frac{1}{2}\rt)- 2(2\pi i)(\frac{1}{2}+\frac{1}{4}) \notag\\
&\qquad -2\log(1-e^{-2\pi i(3/4)}) + 2\log(1-e^{-2\pi i(1/4)}) \\
&= 6\pi i \\\\
\frac{\partial\Psi(WL;\frac{M_1}{N+\frac{1}{2}},z_1,z_2)}{\partial z_2} 
&= 0
\end{align*}
Besides, when $z_{2\gamma-3}=z_{2\gamma-1}=z_{2\gamma+1}=\frac{1}{2}, z_{2\gamma}=z_{2\gamma+2}=\frac{1}{4}$, we have
\begin{alignat*}{2}
\frac{\partial\Psi(WL;2z_{2\gamma-3},z_{2\gamma-1},z_{2\gamma})}{\partial z_{2\gamma -1}} + \frac{\partial\Psi(WL;2z_{2\gamma-1},z_{2\gamma+1},z_{2\gamma+2})}{\partial z_{2\gamma -1}} 
&= 6\pi i &&\qquad\qquad \text{for $\gamma=2,\dots, \alpha-2$} \\
\frac{\partial\Psi(WL;2z_{2\gamma-3}, z_{2\gamma-1},z_{2\gamma})}{\partial z_{2\gamma}} &= 0  &&\qquad\qquad \text{for $\gamma=2,\dots, \alpha-1$} 
\end{alignat*}

By the same computation, when $z_1'=z_3'=\frac{1}{2}, z_2'= z_2\lt(\frac{M_2}{N+\frac{1}{2}}\rt), z_4'=\frac{1}{4}$, we have
\begin{align*}
\frac{\partial\Psi(WL; \frac{M_2}{N+\frac{1}{2}}, z_1', z_2')}{\partial z_1'} +  \frac{\partial\Psi(WL;2z_1',z_3',z_4')}{\partial z_1'}
&= 16\pi i \lt(\frac{1}{2}\rt)- 2(2\pi i)(\frac{1}{2}+\frac{1}{4}) \notag\\
&\qquad -2\log(1-e^{-2\pi i(3/4)}) + 2\log(1-e^{-2\pi i(1/4)}) \\
\frac{\partial\Phi(WL;z_1',z_2')}{\partial z_2'} 
&= 0
\end{align*}
Besides, when $z_{2\gamma-3}'=z_{2\gamma-1}'=z_{2\gamma+1}'=\frac{1}{2}, z_{2\gamma}'=z_{2\gamma+2}'=\frac{1}{4}$, we have
\begin{alignat*}{2}
\frac{\partial\Psi(WL; 2z_{2\gamma-3}',z_{2\gamma-1}',z_{2\gamma}')}{\partial z_{2\gamma -1}'} + \frac{\partial\Psi(WL;2z_{2\gamma-1}',z_{2\gamma+1}',z_{2\gamma+2}')}{\partial z_{2\gamma -1}'} 
&= 6\pi i &&\qquad\qquad\qquad \text{for $\gamma=2,\dots, \beta-1$} \\
\frac{\partial\Psi(WL;2z_{2\gamma-3}', z_{2\gamma-1}',z_{2\gamma}')}{\partial z_{2\gamma}'} &= 0  &&\qquad\qquad\qquad \text{for $\gamma=2,\dots, \beta$}
\end{alignat*}
Finally, when $z_{2\alpha-3}=z_{2\beta-1}'=\frac{1}{2}$, $z_{\zeta}=\frac{1}{2}$, $z_{\zeta+1}=\frac{1}{4}$, we have

\begin{align*}
\frac{\partial\Psi(WL;2z_{2\alpha-5},z_{2\alpha-3},z_{2\alpha-2})}{\partial z_{2\alpha -3}} +  \frac{\partial\Xi^\pm(WL; 2z_{2\alpha-3}, 2z_{2\beta-1}, z_{\zeta},z_{\zeta+1})}{\partial z_{2\alpha -3}} 
&=  6\pi i \\
\frac{\partial\Psi(WL;2z_{2\beta-3}',z_{2\beta-1}',z_{2\beta}')}{\partial z_{2\beta -1}'} +  \frac{\partial\Xi^\pm(WL; 2z_{2\alpha-3}', 2z_{2\beta-1}', z_{\zeta},z_{\zeta+1})}{\partial z_{2\beta -1}'} 
&= 8\pi i \\
 \frac{\partial\Xi^\pm(WL; 2z_{2\beta-1}', 2z_{2\alpha-3}, z_{\zeta},z_{\zeta+1})}{\partial z_{\zeta}}
= \frac{\partial\Xi^\pm(WL; 2z_{2\beta-1}', 2z_{2\alpha-3}, z_{\zeta},z_{\zeta+1})}{\partial z_{\zeta+1}}
&= 0
\end{align*}

As a result, the point $\mathbf{z_{M_1,M_2}}=(z_1,\dots,z_{2\alpha-2},z'_1,\dots,z'_{2\beta},z_\zeta,z_{\zeta+1})$ with
\begin{align}
z_1=\frac{1}{2}, \quad z_2= z_2\lt(\frac{M_1}{N+\frac{1}{2}}\rt), \quad&z_3 = z_5 = \dots = z_{2\alpha-3}=\frac{1}{2},\quad z_4=z_6=\dots=z_{2\alpha-2} = \frac{1}{4} \notag\\
z_1'=\frac{1}{2}, \quad z_2'= z_2\lt(\frac{M_2}{N+\frac{1}{2}}\rt), \quad&z_3' = z_5' = \dots = z_{2\alpha-3}'=\frac{1}{2},\quad z_4'=z_6'=\dots=z_{2\beta}' = \frac{1}{4} \notag\\
&z_\zeta = \frac{1}{2},\quad z_{\zeta+1} = \frac{1}{4}
\end{align}
is a critical point of the Fourier coefficient 
$$\Phi^\pm(W^\alpha_\beta; z_1, z_2, \dots, z_{2\alpha-3},z_{2\alpha-2}, 
z_1', z_2', \dots, z_{2\beta-1}',z_{2\beta}', z_{\zeta},z_{\zeta+1})
-6\pi i \sum_{\gamma=2}^{\alpha} z_{2\gamma-3} - 6\pi i \sum_{\gamma=2}^{\beta}
z'_{2\gamma-3} - 8\pi i z'_{2\beta-1}$$
Furthermore, we have
$$ \Phi^+(W^\alpha_\beta; \mathbf{z_{M_1,M_2}}) = \Phi^-(W^\alpha_\beta;\mathbf{z_{M_1,M_2}}) $$
with
\begin{align}
\Re\Phi^\pm(W^\alpha_\beta; {\mathbf{z}_{M_1,M_2}})
&= \Re \lt[\Psi_{N}\lt(WL; \frac{M_1}{N+\frac{1}{2}}; \frac{1}{2}, z\lt(\frac{M_1}{N+\frac{1}{2}}\rt)\rt) + \sum_{\gamma=1}^{\alpha-2} \Psi_{N}\lt(WL;1,\frac{1}{2},\frac{1}{4}\rt) \rt. \notag \\
&\qquad\quad + \Psi_{N}\lt(WL; \frac{M_2}{N+\frac{1}{2}}; \frac{1}{2}, z\lt(\frac{M_1}{N+\frac{1}{2}}\rt)\rt) + \sum_{\gamma=1}^{\beta-2} \Psi_{N}\lt(WL;1,\frac{1}{2},\frac{1}{4}\rt) \notag \\
&\qquad\qquad \lt. + \Xi^{\pm} \lt(WL; 1, 1, \frac{1}{2}, \frac{1}{4}\rt) \rt] \notag\\
&= \frac{1}{2\pi}\lt[ 
\Vol\lt(\SS^3\backslash WL; s_1=1,s_2 = \frac{M_1}{N+\frac{1}{2}} \rt)
+ 
\Vol\lt(\SS^3\backslash WL; s_1=1,s_2 = \frac{M_2}{N+\frac{1}{2}} \rt) \rt. \notag\\
&\qquad\quad \lt. + (\alpha+\beta-2) \Vol(\SS^3\backslash WL) \rt]
\end{align}
Next, the following lemma guarantees that the Hessian of the potential function at the critical point is non-zero.
\begin{lemma}\label{nonsing2} We have $\det(\Hess\Phi^\pm_{M_1,M_2}(W^\alpha_\beta;\mathbf{z_{M_1,M_2}})) \neq 0$ for $s_1, s_2$ sufficiently close to $1$.
\end{lemma}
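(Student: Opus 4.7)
The plan is to imitate the strategy used in the proof of Lemma~\ref{nonsing1}, combined with a continuity argument in the parameters $s_1,s_2$. The key observation is that when $s_1=s_2=1$ we have $z_2\!\left(\tfrac{M_i}{N+\tfrac12}\right)=\tfrac14$, so $\mathbf{z}_{M_1,M_2}$ degenerates to the point $\left(\tfrac12,\tfrac14,\ldots,\tfrac12,\tfrac14\right)$ and the potential $\Phi^\pm(W^\alpha_\beta;\cdot)$ splits as a sum of blocks each depending on only a small fixed subset of the coordinates: two ``endpoint'' blocks $\Psi\!\left(WL;\tfrac{M_i}{N+\tfrac12};z,z\right)$, $(\alpha-2)+(\beta-1)$ ``middle'' blocks $\Psi(WL;2z_{2\gamma-3},z_{2\gamma-1},z_{2\gamma})$ (and the primed versions), and one ``joining'' block $\Xi^\pm(WL;2z'_{2\beta-1},2z_{2\alpha-3},z_\zeta,z_{\zeta+1})$.

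First, I compute the real part of the Hessian of each block at the critical point. The middle blocks reproduce exactly the matrix $A_2$ appearing in the proof of Lemma~\ref{nonsing1}, which is negative definite. The endpoint blocks $\Psi\!\left(WL;1;z_1,z_2\right)$ at $(z_1,z_2)=(\tfrac12,\tfrac14)$ coincide with $A_1$ (the $s$--dependence disappears at $s=1$, and what remains is a block of the same form as in the unit-Whitehead case). The joining block contributes a $4\times 4$ matrix
\[
B^\pm \;=\; \Re\,\mathrm{Hess}\!\left(\Xi^\pm(WL;1,1,\tfrac12,\tfrac14)\right),
\]
which by direct computation (identical to the computation of $A_2$, plus the extra $\pm 2\pi i(z'_{2\beta-1}-1)(z_\zeta-\tfrac12)$ term that is pure imaginary and hence contributes nothing to the real part) is again a negative definite $4\times 4$ matrix. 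Crucially, the $\pm$ sign only affects the imaginary part of $\mathrm{Hess}\,\Xi^\pm$, so $B^+=B^-$ and the case analysis collapses to one.

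Second, I glue the blocks. Writing $\mathbf{z}=(z_1,\ldots,z_{2\alpha-2},z'_1,\ldots,z'_{2\beta},z_\zeta,z_{\zeta+1})\in\mathbb{R}^{2\alpha+2\beta+2}$ and letting $A=\Re\,\mathrm{Hess}\,\Phi^\pm(W^\alpha_\beta;\mathbf{z}_{M_1,M_2})\big|_{s_1=s_2=1}$, the block decomposition gives
\begin{align*}
\mathbf{z}^T A\,\mathbf{z}
&=(z_1,z_2)A_1(z_1,z_2)^T
+\sum_{\gamma=2}^{\alpha-1}(z_{2\gamma-3},z_{2\gamma-1},z_{2\gamma})A_2(z_{2\gamma-3},z_{2\gamma-1},z_{2\gamma})^T\\
&\quad+(z'_1,z'_2)A_1(z'_1,z'_2)^T
+\sum_{\gamma=2}^{\beta}(z'_{2\gamma-3},z'_{2\gamma-1},z'_{2\gamma})A_2(z'_{2\gamma-3},z'_{2\gamma-1},z'_{2\gamma})^T\\
&\quad+(z_{2\alpha-3},z'_{2\beta-1},z_\zeta,z_{\zeta+1})\,B^\pm\,(z_{2\alpha-3},z'_{2\beta-1},z_\zeta,z_{\zeta+1})^T,
\end{align*}
which is a sum of negative semidefinite quadratic forms, and the first summand in each chain forces $\mathbf{z}=0$ when equality holds. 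Hence $A$ is negative definite at $s_1=s_2=1$, so by the lemma in \cite{L81} the Hessian is non-singular there.

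Third, the entries of $\mathrm{Hess}\,\Phi^\pm_{M_1,M_2}(W^\alpha_\beta;\mathbf{z}_{M_1,M_2})$ depend continuously (indeed holomorphically) on $(s_1,s_2)$ near $(1,1)$, since $z_2(s)$ is holomorphic there. Negative definiteness of the real part is an open condition, so it persists on a neighborhood of $(1,1)$, and the non-singularity of the full Hessian follows. The main technical point is the computation of $B^\pm$; the potential worry that the $\pm$ cross term could spoil definiteness disappears once one notices that this term is pure imaginary and so is invisible to $\Re\,\mathrm{Hess}$.
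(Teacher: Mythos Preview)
Your overall approach matches the paper's: decompose the real part of the Hessian into overlapping block quadratic forms, verify negative (semi)definiteness of each block at $s_1=s_2=1$, conclude negative definiteness of the whole, and then invoke continuity plus the lemma from \cite{L81}. The continuity argument and the observation that the $\pm$ cross term (as well as the $8\pi i (z'_{2\beta-1})^2$ term) is pure imaginary are both correct and are exactly how the paper handles the two signs simultaneously.

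There is, however, a genuine gap in your treatment of the joining block. The $4\times 4$ matrix $B^\pm=\Re\,\mathrm{Hess}\,\Xi^\pm(WL;1,1,\tfrac12,\tfrac14)$ is \emph{not} negative definite. In $\Xi^\pm$ the variable $z'_{2\beta-1}$ occurs only through the terms $8\pi i(z'_{2\beta-1})^2$ and $\pm 2\pi i(z'_{2\beta-1}-1)(z_\zeta-\tfrac12)$, both of which are purely imaginary; hence the entire first row and column of $B^\pm$ vanish, giving
\[
B^\pm=-2\pi\begin{pmatrix}0&0&0&0\\0&-2&-1&-1\\0&-1&1&1\\0&-1&1&2\end{pmatrix},
\]
which is only negative \emph{semi}definite. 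This is precisely what the paper records for its $B_3$. The remedy (which the paper carries out explicitly) is to observe that the bottom-right $3\times3$ block equals $A_2$, so the $B^\pm$ contribution reduces to $(z_{2\alpha-3},z_\zeta,z_{\zeta+1})A_2(z_{2\alpha-3},z_\zeta,z_{\zeta+1})^T$; the missing control on $z'_{2\beta-1}$ is supplied by the last $A_2$ block in the primed chain (the $\gamma=\beta$ term), and the cascade argument then yields $\mathbf{z}=0$. Your phrase ``the first summand in each chain forces $\mathbf{z}=0$'' is the right intuition, but it needs this correction to go through. (Minor: the ambient dimension is $2\alpha+2\beta$, not $2\alpha+2\beta+2$.)
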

\begin{proof}
The idea of the proof is the same as Lemma~\ref{nonsing1}. Consider the matrices
\begin{align}
B_1
&=\Re\Hess\lt(\Phi\lt(WL; \frac{1}{2}, \frac{1}{4}\rt)\rt)
= -2\pi 
\begin{pmatrix}
1& 1 \\
1 & 2
\end{pmatrix}\\
B_2
&=\Re\Hess\lt(\Psi\lt(WL; 1, \frac{1}{2}, \frac{1}{4}\rt)\rt)
= -2\pi 
\begin{pmatrix}
-2 & -1 & -1 \\
-1 & 1 & 1 \\
-1 & 1 & 2
\end{pmatrix} \\
B_3
&= \Re\Hess\lt(\Xi^\pm \lt(WL; 1, 1, \frac{1}{2}, \frac{1}{4}\rt)\rt)
= -2\pi 
\begin{pmatrix}
0 & 0 & 0 & 0 \\
0 & -2 & -1 & -1 \\
0 &-1 & 1 & 1 \\
0 & -1 & 1 & 2
\end{pmatrix}
\end{align}

Next, consider the function 
\begin{align}
F(\mathbf{z}) 
&= \lim_{N\to\infty} \Phi^\pm_{N,N}(W^\alpha_\beta; \mathbf{z}) \notag\\
&=  \Psi\lt(WL; 1; z_1, z_2\rt) + \sum_{\gamma=2}^{\alpha-1} \Psi(WL;2z_{2\gamma-3}, z_{2\gamma-1},z_{2\gamma})  \notag \\
&\qquad + \Psi\lt(WL; 1; z_1', z_2'\rt) + \sum_{\gamma=2}^{\beta} \Psi(WL;2z_{2\gamma-3}', z_{2\gamma-1}',z_{2\gamma}') \notag \\
&\qquad\qquad + \Xi^{\pm} (WL; 2z'_{2\beta-1}, 2z_{2\alpha-3},z_{\zeta},z_{\zeta+1})
\end{align}
Note that as $N\to\infty$, $\mathbf{z_{N,N}}$ tends to the point $\mathbf{z_\infty} = (z_1,\dots,z_{2\alpha-2},z'_1,\dots,z'_{2\beta},z_\zeta,z_{\zeta+1})$ with
\begin{align}
z_1=\frac{1}{2}, \quad z_2= \frac{1}{4}, \quad&z_3 = z_5 = \dots = z_{2\alpha-3}=\frac{1}{2},\quad z_4=z_6=\dots=z_{2\alpha-2} = \frac{1}{4} \notag\\
z_1'=\frac{1}{2}, \quad z_2'= \frac{1}{4}, \quad&z_3' = z_5' = \dots = z_{2\alpha-3}'=\frac{1}{2},\quad z_4'=z_6'=\dots=z_{2\beta}' = \frac{1}{4} \notag\\
&z_\zeta = \frac{1}{2},\quad z_{\zeta+1} = \frac{1}{4}
\end{align}
Let $B= \Re\Hess F(\mathbf{z_\infty})$. Note that for $\mathbf{z}=(z_1, z_2, \dots, z_{2\alpha-3},z_{2\alpha-2}, 
z_1', z_2', \dots, z_{2\beta-1}',z_{2\beta}', z_{\zeta},z_{\zeta+1})\in \RR^{2\alpha+2\beta}$,
\begin{align}
\mathbf{z} B \mathbf{z}^T
&= (z_1, z_2) B_1 (z_1, z_2)^T 
+ \sum_{\gamma=2}^{\alpha-1} (z_{2\gamma-1}, z_{2\gamma+1}, z_{2\gamma+2})B_2(z_{2\gamma-1}, z_{2\gamma+1}, z_{2\gamma+2})^T \notag\\
&\qquad (z'_1, z'_2) B_1 (z'_1, z'_2)^T 
+ \sum_{\gamma=2}^{\beta} (z'_{2\gamma-1}, z'_{2\gamma+1}, z'_{2\gamma+2})B_2(z_{2\gamma-1}, z_{2\gamma+1}, z_{2\gamma+2})^T \notag\\
&\qquad + (z'_{2\beta-1}, z_{\alpha-3}, z_{\zeta}, z_{\zeta+1})B_3(z'_{2\beta-1}, z_{2\alpha-3}, z_{\zeta}, z_{\zeta+1})^T \notag\\
&= (z_1, z_2) B_1 (z_1, z_2)^T 
+ \sum_{\gamma=2}^{\alpha-1} (z_{2\gamma-1}, z_{2\gamma+1}, z_{2\gamma+2})B_2(z_{2\gamma-1}, z_{2\gamma+1}, z_{2\gamma+2})^T \notag\\
&\qquad (z'_1, z'_2) B_1 (z'_1, z'_2)^T 
+ \sum_{\gamma=2}^{\beta} (z'_{2\gamma-1}, z'_{2\gamma+1}, z'_{2\gamma+2})B_2(z_{2\gamma-1}, z_{2\gamma+1}, z_{2\gamma+2})^T \notag\\
&\qquad + (z_{2\alpha-3}, z_{\zeta}, z_{\zeta+1})B_2(z_{2\alpha-3}, z_{\zeta}, z_{\zeta+1})^T \notag\\
&\leq 0
\end{align}
with equality holds if and only if $\mathbf{z}=0$. As a result, since the real part of the matrix 
$$\Hess\Phi^\pm_{M_1,M_2}(W^\alpha_\beta;\mathbf{z_{M_1,M_2}})$$
is negative definite for $s_1, s_2$ sufficiently close to $1$, by Lemma in \cite{L81}, it is non-singular.
\end{proof}
As a result, the asymptotic expansion formula for $I^\pm$ is given by
\begin{align}
I_\pm 
&\stackrel[N \to \infty]{\sim}{} 
\frac{ \pi^{\alpha+\beta}e^{\frac{\pi i}{2}(\alpha+\beta-2)}E(W^\alpha_\beta; \mathbf{z_{M_1,M_2}})}{\sqrt{\det(-\Hess\Phi^\pm_{M_1,M_2}(W^\alpha_\beta;\mathbf{z_{M_1,M_2}}))}}
\lt(N+\frac{1}{2}\rt)^{\alpha+\beta}
\exp\lt(\lt(N+\frac{1}{2}\rt)\Phi^\pm_{M_1,M_2}(W^\alpha_\beta;\mathbf{z_{M_1,M_2}})\rt)
\end{align} 
and
\begin{align}
&J_{M_1,M_2}(W^\alpha_\beta,e^{\frac{2\pi i}{N+\frac{1}{2}}}) \notag\\
&\stackrel[N \to \infty]{\sim}{}  
\frac{e^{\frac{2\pi i}{N+\frac{1}{2}} \lt(M_1^2 - \frac{M_1}{2} + M_2^2 - \frac{M_2}{2} - 1\rt)}}{2i\sin(\frac{\pi}{N+\frac{1}{2}})}
\exp\lt(  -\varphi_r \lt(  \frac{\pi}{2N+1}\rt)\rt)^{\alpha+\beta} 
(I_+ + I_-)
\notag\\
&\stackrel[N \to \infty]{\sim}{} 
\frac{e^{\frac{2\pi i}{N+\frac{1}{2}} \lt(M_1^2 - \frac{M_1}{2} + M_2^2 - \frac{M_2}{2} - 1\rt)}}{2i\sin(\frac{\pi}{N+\frac{1}{2}})} 
\exp\lt(  -\varphi_r \lt(  \frac{\pi}{2N+1}\rt)\rt)^{\alpha+\beta} 
\pi^{\alpha+\beta}e^{\frac{\pi i}{2}(\alpha+\beta-2)}E(W^\alpha_\beta; \mathbf{z_{M_1,M_2}}) \notag\\
&\qquad \qquad \times \lt[ 
\frac{1}{\sqrt{\det(-\Hess\Phi^+_{M_1,M_2}(W^\alpha_\beta;\mathbf{z_{M_1,M_2}}))}} 
+
\frac{1}{\sqrt{\det(-\Hess\Phi^-_{M_1,M_2}(W^\alpha_\beta;\mathbf{z_{M_1,M_2}}))}}  \rt] \notag \\
&\qquad \qquad \qquad \times \exp\lt(\lt(N+\frac{1}{2}\rt)\Phi^+_{M_1,M_2}(W^\alpha_\beta;\mathbf{z_{M_1,M_2}})\rt)
\end{align}

Since the equation
$ \frac{1}{\sqrt{a}} + \frac{1}{\sqrt{b}} = 0$ has no solution for all $a,b \in \CC$, we have
$$ \lt[ 
\frac{1}{\sqrt{\det(-\Hess\Phi^+_{M_1,M_2}(W^\alpha_\beta;\vec{z}_{M_1,M_2}))}} 
+
\frac{1}{\sqrt{\det(-\Hess\Phi^-_{M_1,M_2}(W^\alpha_\beta;\vec{z}_{M_1,M_2}))}}  \rt] \neq 0 $$

This completes the proof of Theorem~\ref{mainthm2}. In particular, it implies Theorem~\ref{mainthm5} for $W^\alpha_\beta$. Corollary~\ref{cablingtv2} for $\SS^3 \backslash W^\alpha_\beta$ then follows directly from the method described in Section~\ref{strategy}.

\appendix
\section{Colored Jones polynomials under the Hopf union}

Recall that in the $N$-th Temperley-Lieb algebra $TL_N$ over $\ZZ[t^{\pm \frac{1}{4}}]$, the Jones-Wenzl idempotent $f_N$ satisfies the properties that for any $b \in TL_N$,  $f_N \cdot b = C_b f_N$ for some element $C_b \in \ZZ[t^{\pm \frac{1}{4}}]$ (see for example Lemma 13.2 in \cite{L97}). Suppose $D_K$ is a $1$-tangle such that the closure of $D_K$ is a knot diagram of the knot $K$. Then we have

\begin{align*}
\includegraphics[width=.25\linewidth]{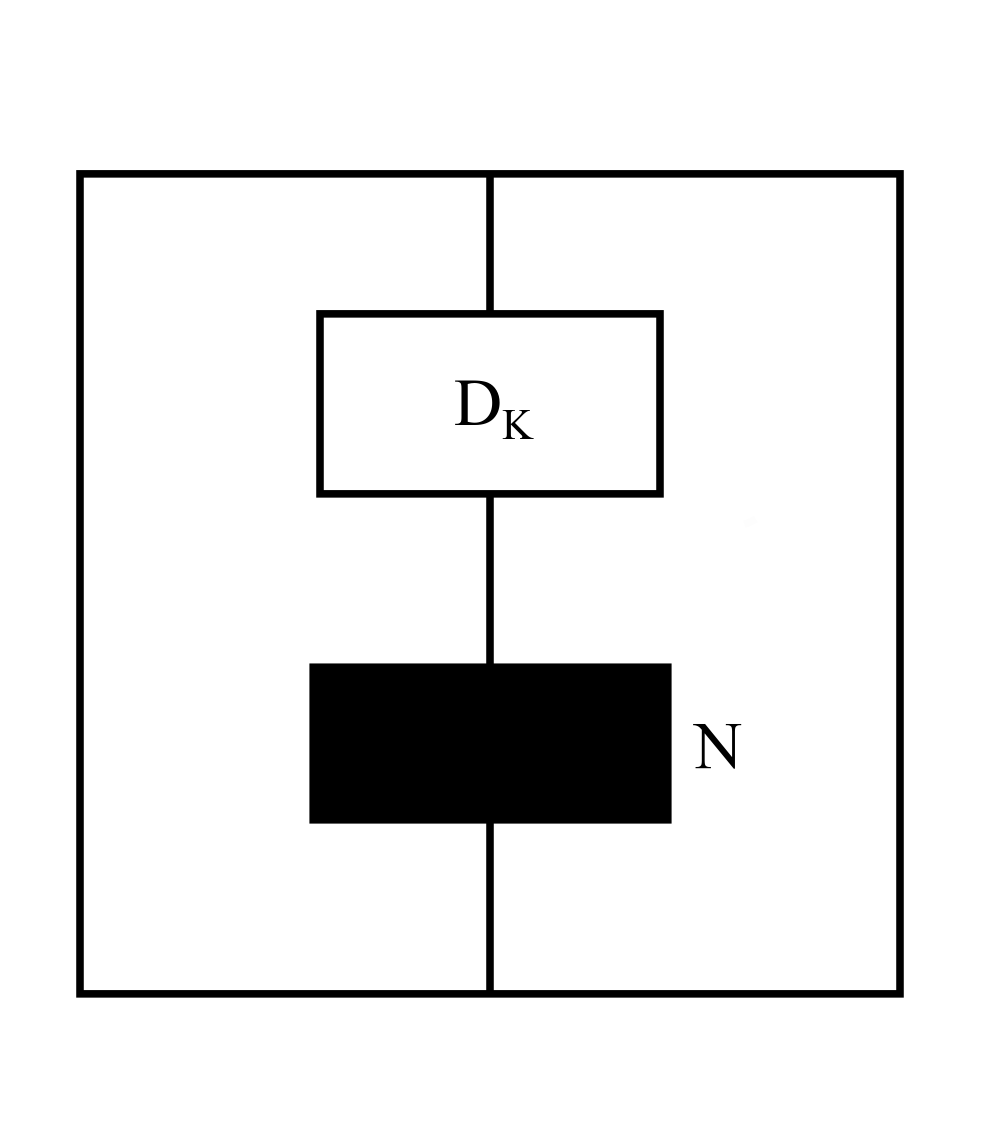} 
\raisebox{60pt}{= C} 
\includegraphics[width=.25\linewidth]{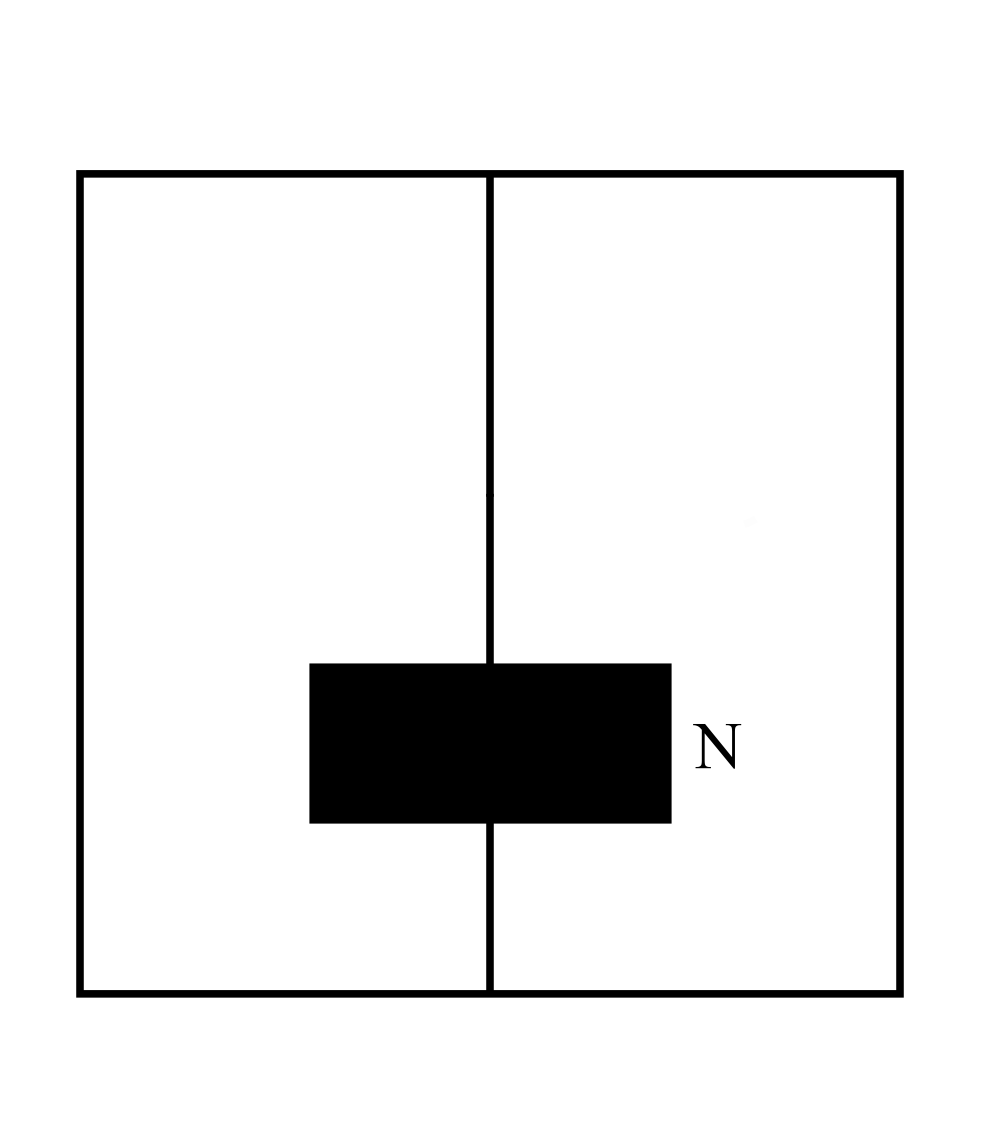} 
\end{align*}
where the black box is the $N$-th Jones Wenzl idempotent and $C$ is an element in $\ZZ[t^{\pm \frac{1}{4}}]$. By closing up the tangles on both sides, we get
$$ C = \frac{J_N(K, t)}{J_N(U,t)} = \frac{J_N(K, t)}{[N]} $$ 
where $U$ is the unknot and $[N]=(t^{\frac{N}{2}}-t^{-\frac{N}{2}})/(t^{\frac{1}{2}}-t^{-\frac{1}{2}})$. As a consequence,
\begin{align*}
|J_{M_1,M_2}( K_1 \# W^0_0 \# K_2, t) |
= \lt|J_{M_1,M_2}( W^0_0, t ) \frac{J_{M_1}(K_1,t)}{[M_1]}\frac{J_{M_2}(K_1,t)}{[M_2]} \rt|
\end{align*}
By Lemma 14.2 in \cite{L97}, we have
$$ |J_{M_1,M_2}( W^0_0, t )| = \lt| \frac{[M_1M_2]}{[M_1]} J_{M_1}(U,t) \rt| = | [M_1 M_2] | $$
This completes the proof of Lemma~\ref{CJHUlemma}.

\vspace{10pt}
\begin{minipage}[t]{0.5\textwidth}
   Ka Ho Wong\\
   Department of Mathematics,\\
   Texas A\&M University,\\
   College Station, \\
   Texas, United States\\
   daydreamkaho@math.tamu.edu\\
\end{minipage}

\end{document}